\numberwithin{equation}{section}
\theoremstyle{plain}
\newtheorem{theorem}{Theorem}[section]
\newtheorem{lemma}[theorem]{Lemma}
\newtheorem{proposition}[theorem]{Proposition}
\newtheorem{corollary}[theorem]{Corollary}
\theoremstyle{definition}
\newtheorem{definition}[theorem]{Definition}
\newtheorem{example}[theorem]{Example}
\newtheorem{remark}[theorem]{Remark}
\newtheorem{question}[theorem]{Question}
\let\c@equation\c@theorem  
\newcommand{\bfl}{\mathfrak l}
\DeclareMathOperator{\hdet}{hdet} 
\DeclareMathOperator{\gldim}{gldim}
\DeclareMathOperator{\Ext}{Ext} \DeclareMathOperator{\Tor}{Tor}
 \DeclareMathOperator{\tr}{tr}
\DeclareMathOperator{\Aut}{Aut}
\DeclareMathOperator{\injdim}{injdim}
\newcommand{\kpx}{k[\underline{x}]}
\newcommand{\knx}{k_{-1}[\underline{x}]}
\newcommand{\kqx}{k_{q}[\underline{x}]}
\newcommand{\cal}{\mathcal}
\newcommand{\be}{\begin{enumerate}}
\newcommand{\ee}{\end{enumerate}}
\newcommand{\bq}{\begin{eqnarray*}}
\newcommand{\eq}{\end{eqnarray*}}
\newcommand{\bqn}{\begin{eqnarray}}
\newcommand{\eqn}{\end{eqnarray}}
\begin{document}
\title[Invariants of $(-1)$-skew polynomial rings]
{Invariants of $(-1)$-skew polynomial rings \\
under permutation representations}

\author{E. Kirkman, J. Kuzmanovich and J.J. Zhang}

\address{Kirkman: Department of Mathematics,
P. O. Box 7388, Wake Forest University, Winston-Salem, NC 27109}

\email{kirkman@wfu.edu}

\address{Kuzmanovich: Department of Mathematics,
P. O. Box 7388, Wake Forest University, Winston-Salem, NC 27109}

\email{kuz@wfu.edu}

\address{Zhang: Department of Mathematics, Box 354350,
University of Washington, Seattle, Washington 98195, USA}

\email{zhang@math.washington.edu}


\subjclass[2000]{16A62,16E70,20J50}





\keywords{Skew polynomial ring, permutation group, symmetric
function, Hilbert series, fixed subring, complete intersection}


\maketitle


\bigskip

\setcounter{section}{-1}
\section{Introduction}
\label{xxsec0}

Let $k$ be a base field of characteristic zero (unless otherwise
stated) and let $\kqx$ denote the $q$-skew polynomial ring $k_{q}[x_1,
\ldots, x_n]$ that is generated by $\{x_i\}_{i=1}^n$ and subject to
the relations $x_jx_i=qx_ix_j$ for all $i<j$, where $q$ is a nonzero
element in $k$.  In previous work \cite{KKZ1}-\cite{KKZ4} we have
studied the invariant theory of noncommutative Artin-Schelter
regular (or AS regular, for short) algebras such as $\kqx$ under
linear actions by finite groups $G$. We have shown that often the
classical invariant theory of the commutative AS regular algebra
$\kpx:=k[x_1, \ldots, x_n]$ extends to noncommutative AS regular
algebras in some analogous way. In this paper we consider the case
where $G$ is a group of permutations of $\{x_i\}_{i=1}^n$ acting on
the $(-1)$-skew polynomial ring $\knx$, which is generated by
$\{x_i\}_{i=1}^n$ and subject to the relations
\begin{equation}
\label{E0.0.1}\tag{E0.0.1} x_ix_j = - x_j x_i
\end{equation}
for all $i \neq j$. We have chosen to consider $\knx$ because any
permutation of $\{x_i\}_{i=1}^n$ preserves the relations
\eqref{E0.0.1}, and hence extends to an algebra automorphism of
$\knx$; the only $q$-skew polynomial algebras $\kqx$ with this property
are the cases when $q= \pm 1$. Hence any subgroup of the symmetric
group $\mathfrak{S}_n$ acts on both $\kpx$ and $\knx$ as
permutations, and our main focus is on the ring of invariants $\knx^G$
when $G$ is a subgroup of $\mathfrak{S}_n$.

The study of the fixed subring $\kpx^G$ under permutation groups $G$
of the commutative indeterminates $\{x_i\}_{i=1}^n$ has a long and
distinguished history.  Gauss showed that when $G$ is the full
symmetric group $\mathfrak{S}_n$, invariant polynomials could be
expressed uniquely in terms of the $n$ symmetric polynomials
\cite[Theorem 4.13]{Ne}; the symmetric polynomials are algebraically
independent so that $\kpx^G$ is itself a polynomial ring.  This
result was generalized to other groups (so-called ``reflection
groups") by Shephard-Todd \cite{ST} and Chevalley \cite{Ch} in the
1950s. It follows from \cite[Theorem 1.1]{KKZ2} that $\knx^G$ will
not be an AS regular algebra, even for a classical reflection group
like the symmetric group. However, we will show that $A^G$ is always
an AS Gorenstein domain [Theorem \ref{xxthm1.5}], while $\kpx^G$ is
not always Gorenstein [Example \ref{xxex1.6}]. In \cite{CA} algebra
generating sets for $\knx^{\mathfrak{S}_n}$, the invariants under
the full symmetric group [Theorem \ref{zzthm3.10}], and for
$\knx^{\mathfrak{A}_n}$, the invariants under the alternating group
$\mathfrak{A}_n$ [Theorem \ref{zzthm4.10}] have been produced. We
will show that for both the full symmetric group [Theorem
\ref{zzthm3.12}] and the alternating group [Theorem \ref{zzthm4.15}]
the fixed subring is isomorphic to an AS regular algebra $R$ modulo
a central regular sequence of $R$ (what we call a ``classical
complete intersection" in \cite{KKZ4}). Moreover, we generalize some
results for upper bounds on the degrees of algebra generators for
$\knx^G$ [Theorems \ref{zzthm2.5} and \ref{zzthm2.6}] from results
in the commutative case.

One motivation for this study was to consider the theorem of
Kac-Watanabe \cite{KW}, and independently of Gordeev \cite{G1}, that
provides a necessary condition for any finite group, not
necessarily a permutation group, to have the property that $\kpx^G$
is a complete intersection (the condition is that $G$ be a group
generated by so-called ``bireflections"). This theorem of
Kac-Watanabe-Gordeev was a first step toward the (independent)
classification of finite groups $G$, acting linearly as
automorphisms of  $\kpx$, such that $\kpx^G$ is a complete
intersections that was proven by Gordeev \cite{G2} and Nakajima
\cite{N1,N2, NW}. We verify that an analogous result holds for
$\knx$ and subgroups of the symmetric group $\mathfrak{S}_n$ for $n
\leq 4$ [Example \ref{xxex5.6}], and conjecture that this result is
true in general. We prove that the converse of the
Kac-Watanabe-Gordeev Theorem holds for $\knx$: if $G$ is a group of
permutations of the $\{x_i\}_{i=1}^n$ that is generated by
quasi-bireflections then $\knx^G$ is a classical complete
intersection [Theorem \ref{xxthm5.4}] (this result is not true for
the commutative polynomial ring $\kpx$ [Example \ref{xxex5.5}]).

These fixed rings of $\knx$ under permutation subgroups produce
a tractable class of AS Gorenstein domains that possess a variety of
properties; in many cases their generators have combinatorial
descriptions and their Hilbert series can be described explicitly.
The following table summarizes results presented in this
paper and gives a comparison between the results
of $\knx^G$ with that of $\kpx^G$ for any subgroup
$\{1\}\neq G\subset {\mathfrak{S}_n}$:

\medskip

\begin{tabular}{|c|c|c|}\hline
Statements about $A^G$ &\rule{.1in}{0in} when
$A=\kpx$\rule{.1in}{0in} &
\rule{.1in}{0in} when $A=\knx$ \rule{.1in}{0in}\rule[-.15in]{0in}{.378in}\\
\hline Being AS Gorenstein & Not always & Always \rule[-.15in]{0in}{.378in}\\
\hline Being AS regular  & Sometimes & Never \rule[-.15in]{0in}{.378in}\\
\hline $cci^+(A^{\mathfrak{S}_n})$ & 0 & $\lfloor \frac{n}{2}\rfloor
$
\rule[-.15in]{0in}{.378in}\\
\hline $\deg H_{A^G}(t)$ & $\leq -n$ & $-n$ \rule[-.15in]{0in}{.378in}\\
\hline {\small Bound for degrees of generators} & $\max\{n, {n
\choose 2}\}$ & ${n \choose 2}+\lfloor \frac{n}{2}\rfloor (\lfloor
\frac{n}{2}\rfloor+1)$
\rule[-.15in]{0in}{.378in}\\
\hline  KWG theorem holds & Yes & Conjecture \rule[-.15in]{0in}{.378in}\\
\hline Converse of KWG holds & No & Yes \rule[-.15in]{0in}{.378in}\\
\hline
\end{tabular}
\medskip

\noindent
where KWG stands for Kac-Watanabe-Gordeev.

\section{Definitions and basic properties}
\label{xxsec1}

An algebra $A$ is called {\it connected graded} if
$$A=k\oplus A_1\oplus A_2\oplus \cdots$$
and $A_iA_j\subset A_{i+j}$ for all $i,j\in {\mathbb N}$.
The Hilbert series of $A$ is defined to be
$$H_A(t)=\sum_{i\in {\mathbb N}} (\dim A_i)t^i.$$

\begin{definition}
\label{xxdef1.1}
Let $A$ be a connected graded algebra.
\begin{enumerate}
\item[(1)]
We call $A$ {\it Artin-Schelter Gorenstein} (or {\it AS
Gorenstein}, for short) if the following conditions hold:
\begin{enumerate}
\item[(a)]
$A$ has injective dimension $d<\infty$ on
the left and on the right,
\item[(b)]
$\Ext^i_A(_Ak,_AA)=\Ext^i_{A}(k_A,A_A)=0$ for all
$i\neq d$, and
\item[(c)]
$\Ext^d_A(_Ak,_AA)\cong \Ext^d_{A}(k_A,A_A)\cong k(\bfl)$ for some
integer $\bfl$. Here $\bfl$ is called the {\it AS index} of $A$.
\end{enumerate}
If in addition,
\begin{enumerate}
\item[(d)]
$A$ has finite global dimension, and
\item[(e)]
$A$ has finite Gelfand-Kirillov dimension,
\end{enumerate}
then $A$ is called {\it Artin-Schelter regular} (or {\it AS
regular}, for short) of dimension $d$.
\item[(2)]
If $A$ is a noetherian, AS regular graded domain of global
dimension $n$ and $H_A(t)=(1-t)^{-n}$,
then we call $A$ {\it a quantum polynomial ring}
of dimension $n$.
\end{enumerate}
\end{definition}

Skew polynomial rings $\kqx$, where $q\in k^\times:=k\setminus
\{0\}$, with $\deg x_i=1$ are quantum polynomial rings and also
Koszul algebras. Next we recall from \cite{KKZ1} the definition of a
noncommutative version of a reflection. If $A$ is a connected graded
algebra, let $\Aut(A)$ denote the group of all graded algebra
automorphisms of $A$. If $g\in \Aut(A)$, then the trace function of
$g$ is defined to be
$${\rm Tr}_A(g,t)=\sum_{i=0}^{\infty} \tr(g|_{A_i}) t^i
\in k[[t]],$$
where $\tr(g|_{A_i})$ is the trace of the linear map
$g|_{A_i}$.
Note that ${\rm Tr}_A(g,0)=1$ and that the trace of the identity map
is the Hilbert series of the algebra $A$. The trace of a graded
algebra automorphism of a Koszul algebra can be computed from the
Koszul dual using the following result.

\begin{lemma}
\label{xxlem1.2}
\cite[Corollary 4.4]{JiZ}
Let $A$ be a Koszul algebra with Koszul dual algebra $A^!$.  Let
$g\in \Aut(A)$ and $g^\tau$ be the induced dual automorphism of
$A^!$. Then
$${\rm Tr}_A(g, t) = ({\rm Tr}_{A^!}(g^\tau, -t))^{-1}.$$
\end{lemma}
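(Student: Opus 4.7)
The plan is to deduce the trace identity from the Koszul resolution of the trivial module. For a Koszul algebra $A$, the minimal graded free resolution of $k = A/A_{\ge 1}$ has the form
$$\cdots \longrightarrow A \otimes_k (A^!_n)^{\ast} \longrightarrow \cdots \longrightarrow A \otimes_k (A^!_1)^{\ast} \longrightarrow A \longrightarrow k \longrightarrow 0,$$
where $(A^!_n)^{\ast}$ sits in internal degree $n$ and in homological degree $n$. Any graded algebra automorphism $g \in \Aut(A)$ induces the Koszul dual automorphism $g^{\tau} \in \Aut(A^!)$, and the contragredient $(g^{\tau})^{\ast}$ on each $(A^!_n)^{\ast}$. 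A key step is checking that the Koszul differentials intertwine these actions, so that the entire resolution becomes a complex of $g$-equivariant graded $A$-modules.

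Once that compatibility is in hand, I would apply the graded Euler characteristic argument. Because graded traces are additive on short exact sequences and multiplicative on tensor products over $k$, exactness of the Koszul resolution combined with $\tr(g|_k) = 1$ gives
$$\sum_{n \ge 0} (-1)^n \, {\rm Tr}_A(g, t) \cdot \tr\bigl((g^{\tau})^{\ast}|_{(A^!_n)^{\ast}}\bigr) \, t^n \;=\; 1.$$
Pulling the factor ${\rm Tr}_A(g, t)$ outside the sum and using $\tr((g^{\tau})^{\ast}|_{(A^!_n)^{\ast}}) = \tr(g^{\tau}|_{A^!_n})$ (finite-dimensional duality), the remaining series is
$$\sum_{n \ge 0} \tr(g^{\tau}|_{A^!_n}) \, (-t)^n = {\rm Tr}_{A^!}(g^{\tau}, -t),$$
and the identity ${\rm Tr}_A(g, t) \cdot {\rm Tr}_{A^!}(g^{\tau}, -t) = 1$ follows at once.

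The main obstacle is bookkeeping rather than conceptual: one must verify that the $g$-action on the Koszul complex is genuinely chain-level equivariant (this uses that the Koszul differential is built canonically from the pairing $A_1 \otimes A^!_1 \to k$, which is preserved by the pair $(g, g^{\tau})$ by construction of $g^{\tau}$) and that the internal-degree shifts line up so the homological sign $(-1)^n$ and the degree shift $t^n$ combine to produce the evaluation at $-t$ on the dual side. A subsidiary point is justifying term-by-term trace computations in $k[[t]]$, which is immediate because each $A^!_n$ is finite-dimensional and the internal degrees in the $n$-th term of the resolution start at $n$, so only finitely many terms contribute to each coefficient.
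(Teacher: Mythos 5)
The paper gives no proof of Lemma \ref{xxlem1.2} at all: it is quoted verbatim from \cite[Corollary 4.4]{JiZ}, so there is no internal argument to compare against. Your Euler-characteristic computation on the Koszul resolution is correct and is in fact the standard proof underlying the cited result (Jing--Zhang obtain it from the graded trace identity for a minimal free resolution of $k$, using that for a Koszul algebra $\Tor^A_n(k,k)\cong (A^!_n)^\ast$ concentrated in internal degree $n$ --- exactly your resolution, phrased through $\Tor$). One small precision worth fixing in your equivariance step: the pair $(g,g^\tau)$ does not \emph{preserve} the pairing $A_1\otimes A^!_1\to k$ (that would force $g^\tau=(g^{-1})^\ast$ on generators and your final formula would then involve $(g^\tau)^{-1}$); rather $g^\tau$ is induced by the transpose, i.e.\ it is \emph{adjoint} to $g$, $\langle g v,\xi\rangle=\langle v,g^\tau \xi\rangle$ --- equivalently, the $g$-action on $(A^!_n)^\ast\subset A_1^{\otimes n}$ is $g^{\otimes n}$ and is the transpose of $g^\tau|_{A^!_n}$, which is precisely what your trace identity $\tr\bigl((g^\tau)^\ast|_{(A^!_n)^\ast}\bigr)=\tr\bigl(g^\tau|_{A^!_n}\bigr)$ uses (a quick sanity check: $A=k[x]$, $g(x)=\lambda x$ requires $g^\tau(\epsilon)=\lambda\epsilon$, the transpose convention, for the stated formula to hold). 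With that convention pinned down, every step of your argument goes through.
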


\begin{definition}
\label{xxdef1.3}
Let $A$ be an AS regular algebra such that
$$H_A(t) =\frac{1}{(1-t)^{n}f(t)}$$
where $f(1)\neq 0$. Let $g\in \Aut(A)$.
\begin{enumerate}
\item
\cite[Definition 2.2]{KKZ1}
Then $g$ is called a {\it quasi-reflection} of $A$ if
$${\rm Tr}_A(g,t)={\frac{1}{(1-t)^{n-1}q(t)}}$$
for $q(1) \neq 0$. If $A$ is a quantum polynomial ring, then
$H_A(t)=(1-t)^{-n}$. In this case $g$ is a {\it quasi-reflection} if
and only if
\begin{equation}\label{E1.3.1}\tag{E1.3.1}
{\rm Tr}_A(g,t)={\frac{1}{(1-t)^{n-1}(1-\lambda t)}} \end{equation}
for some scalar $\lambda \neq 1$. Note that we have chosen not to
call the identity map a quasi-reflection.
\item
\cite[Definition 3.6(b)]{KKZ4}
Then $g$ is called a {\it quasi-bireflection} of $A$ if
$${\rm Tr}_A(g,t)={\frac{1}{(1-t)^{n-2}q(t)}}$$
for $q(1) \neq 0$.
\end{enumerate}
\end{definition}

When $A$ is noetherian and AS Gorenstein and $g$ is in $\Aut(A)$,
the homological determinant of $g$, denoted by $\hdet g$, is defined
in \cite[Definition 2.3]{JoZ}. When $A=\kpx$, the homological
determinant of $g$ is the inverse of determinant of the linear map,
induced by $g$ on the degree one piece $A_1=\bigoplus_{i=1}^n kx_i$
of $A$, and, more generally, it is defined using a scalar map induced
on the local cohomology of $A$; see \cite{JoZ} for details. The
homological determinant is a group homomorphism
$$\hdet: \quad \Aut(A) \rightarrow k^\times.$$
When $A$ is AS regular, the conditions of the following theorem are
satisfied by \cite[Proposition 3.3]{JiZ} and \cite[Proposition 5.5]{JoZ},
and $\hdet g$ can be computed from the trace of $g$, as given in the
following result.

\begin{lemma}
\label{xxlem1.4}
\cite[Lemma 2.6]{JoZ}
Let $A$ be noetherian and AS Gorenstein and let $g\in \Aut(A)$.
If $g$ is $k$-rational in the sense of \cite[Definition 1.3]{JoZ},
then the rational function ${\rm Tr}_A(g,t)$ has the form
$${\rm Tr}_A(g,t) = (-1)^n (\hdet g)^{-1} t^{-\ell}
+ {\rm lower~terms}$$
when it is written as a Laurent series in $t^{-1}$.
\end{lemma}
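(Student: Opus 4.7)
The plan is to reduce the Laurent-expansion claim to an equivariant AS Gorenstein functional equation for the trace series, and then to read off the leading coefficient from the definition of $\hdet g$ as the scalar action on the socle of the top local cohomology.

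Let $d = \injdim A$. The AS Gorenstein condition forces $H^i_\fm(A) = 0$ for $i \neq d$ and makes $H^d_\fm(A)$ concentrated in degrees $\le -\ell$ with its lowest-degree piece $H^d_\fm(A)_{-\ell}$ one-dimensional; by the convention of \cite{JoZ}, the scalar $\hdet g$ is defined so that $g$ acts on this socle as $(\hdet g)^{-1}$. The equivariant Matlis/Serre duality $H^d_\fm(A) \cong {}^{\mu}\! A^{\vee}(\ell)$, where $\mu$ is the Nakayama automorphism, refines the AS Gorenstein Hilbert-series identity $H_A(t) = (-1)^d t^{-\ell} H_A(t^{-1})$ to an equivariant trace-series functional equation
$${\rm Tr}_A(g, t) \;=\; (-1)^d (\hdet g)^{-1}\, t^{-\ell}\, {\rm Tr}_A(g^{-1}, t^{-1}).$$
The $k$-rationality of $g$ lets us treat both sides as rational functions in $t$ with coefficients in $k$, so the identity is meaningful, and the extra scalar $(\hdet g)^{-1}$ records the discrepancy between the $g$-action on the socle of $H^d_\fm(A)$ and the (trivial) $g$-action on $A_0 = k$.

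Granted the functional equation, the leading Laurent coefficient at $t = \infty$ follows immediately: ${\rm Tr}_A(g^{-1}, s)$ is a power series in $s$ with constant term $\tr(g^{-1}|_{A_0}) = 1$, so ${\rm Tr}_A(g^{-1}, t^{-1}) = 1 + O(t^{-1})$ as $t \to \infty$, and the right-hand side of the functional equation equals $(-1)^d (\hdet g)^{-1} t^{-\ell}$ plus terms of strictly smaller order in $t$. This yields the desired expansion (with $n = d$ in the notation of the lemma). The main obstacle is establishing the equivariant functional equation with the correct scalar on the nose: this requires tracking how the Nakayama twist enters the noncommutative Serre duality and verifying that the leading coefficient is $(\hdet g)^{-1}$ rather than $\hdet g$ or some conjugate; once the twist and sign conventions are pinned down, the extraction of the leading Laurent term is a one-line calculation.
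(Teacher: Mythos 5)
Your proposal reconstructs precisely the argument of the cited source: the paper offers no proof of this lemma (it is quoted verbatim from [JoZ, Lemma 2.6]), and the proof there runs exactly along your lines --- AS Gorensteinness concentrates local cohomology in degree $d$ with $H^d_{\fm}(A)$ a twisted shifted Matlis dual of $A$, the scalar $(\hdet g)^{-1}$ on the one-dimensional degree-$(-\ell)$ component is essentially the definition of $\hdet$ in [JoZ] (so the ``main obstacle'' you flag is resolved by that definition plus JoZ's identification of the Laurent expansion of $\mathrm{Tr}_A(g,t)$ at $t=\infty$ with the alternating sum of traces on local cohomology), and the leading term then falls out as you say. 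One cosmetic slip: since $H^d_{\fm}(A)$ is concentrated in degrees $\leq -\ell$, the one-dimensional piece $H^d_{\fm}(A)_{-\ell}$ is its \emph{top}-degree (socle) component, not its ``lowest-degree piece''; this does not affect the argument.
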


The following result is not hard to prove.

\begin{theorem}
\label{xxthm1.5} Let $G$ be any subgroup of the symmetric group
$\mathfrak{S}_n$ acting on $\knx$ as permutations.
\begin{enumerate}
\item
The fixed subring $\knx^G$ is an AS Gorenstein domain.
\item
If $G\neq \{1\}$, then $\knx^G$ is not AS regular.
\end{enumerate}
\end{theorem}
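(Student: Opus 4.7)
The plan is to handle (1) and (2) in parallel, using the trace of a permutation on $\knx$ computed via the Koszul dual.

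For (1), the algebra $A := \knx$ is a noetherian AS regular domain (iterated Ore extension of $k$), so $A^G$ is automatically a domain. For the AS Gorenstein conclusion I would invoke the standard J{\o}rgensen--Zhang criterion: $A^G$ is AS Gorenstein whenever $A$ is noetherian AS regular and $\hdet(g)=1$ for every $g\in G$. The key structural observation, in contrast to the commutative case, is that the Koszul dual of $\knx$ is the \emph{commutative} graded algebra $A^{!}\cong k[\xi_1,\ldots,\xi_n]/(\xi_i^2 : 1\le i\le n)$. A permutation $\sigma\in\mathfrak{S}_n$ acts on $A^!$ by $\xi_i\mapsto \xi_{\sigma(i)}$, and since $A^!$ is commutative the trace on each graded piece simply counts the $\sigma$-stable $d$-subsets of $\{1,\ldots,n\}$ (i.e., unions of cycles). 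Writing $\lambda_1,\ldots,\lambda_r$ for the cycle lengths of $\sigma$, this gives
$$\mathrm{Tr}_{A^!}(\sigma,t)=\prod_{i=1}^{r}\bigl(1+t^{\lambda_i}\bigr),$$
so by Lemma \ref{xxlem1.2},
$$\mathrm{Tr}_A(\sigma,t)=\prod_{i=1}^{r}\frac{1}{1+(-1)^{\lambda_i}t^{\lambda_i}}.$$
Expanding around $t=\infty$, each factor has leading Laurent term $(-1)^{\lambda_i} t^{-\lambda_i}$, so the product has leading term $(-1)^{\sum\lambda_i}t^{-n}=(-1)^n t^{-n}$. Comparing with Lemma \ref{xxlem1.4} and using that $\knx$ has AS index $n$ then forces $\hdet(\sigma)=1$.

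For (2), I would appeal to the noncommutative Shephard--Todd--Chevalley-type theorem from earlier KKZ work: for a noetherian quantum polynomial ring $A$, $A^G$ is AS regular iff $G$ is generated by quasi-reflections of $A$. From the product formula above, the pole of $\mathrm{Tr}_A(\sigma,t)$ at $t=1$ comes only from odd-length cycles (the factor $\frac{1}{1+(-1)^{\lambda}t^{\lambda}}$ is regular at $t=1$ for even $\lambda$), so its order equals the number $m$ of odd cycles of $\sigma$; (E1.3.1) then says $\sigma$ is a quasi-reflection of $\knx$ only if $m=n-1$. This is impossible: if $m=n-1$, then $n-m=1$ leaves no room for any cycle of even length $\ge 2$, so all $n-1$ cycles of $\sigma$ are odd with total length $n$, but then either every cycle has length $1$ (total $n-1$) or some cycle has length $\ge 3$ (total $\ge n+1$). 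Hence no nontrivial permutation is a quasi-reflection, so $\knx^G$ cannot be AS regular for any $G\ne\{1\}$.

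The main obstacle, or rather the conceptual pivot, is identifying $(\knx)^!$ as the commutative truncated polynomial ring: this is what forces the top Laurent coefficient in Part (1) to be $(-1)^n$ uniformly, and hence forces $\hdet$ to be trivial on all of $\mathfrak{S}_n$ (strikingly different from the commutative case, where $\hdet(\sigma)=\mathrm{sign}(\sigma)$). Once the Koszul dual is identified, everything else is routine: a Laurent-series top-coefficient inspection for Part (1) and an elementary cycle-counting argument for Part (2).
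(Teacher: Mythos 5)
Your proposal is correct and follows essentially the same route as the paper: the Koszul dual $k[x_1,\ldots,x_n]/(x_1^2,\ldots,x_n^2)$ together with Lemmas \ref{xxlem1.2} and \ref{xxlem1.4} to force $\hdet g=1$ for all $g\in\mathfrak{S}_n$, then \cite[Theorem 3.3]{JoZ} for part (1), and the nonexistence of quasi-reflections plus \cite[Theorem 1.1]{KKZ2} for part (2). Your two minor variations --- computing the trace of an arbitrary permutation as $\prod_i \bigl(1+(-1)^{\lambda_i}t^{\lambda_i}\bigr)^{-1}$ instead of handling only transpositions and invoking that $\hdet$ is a group homomorphism, and excluding quasi-reflections by the pole-order-at-$t=1$ cycle-parity count rather than via the observation that a quasi-reflection of a quantum polynomial ring has $\hdet g=\lambda\neq 1$ --- are precisely the content of Lemma \ref{xxlem1.7}(1),(2) and (4), which the paper itself cites parenthetically in its proof.
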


\begin{proof} (1)
The trace of any transposition $g=(i,j)$ in $\mathfrak{S}_n$
can be computed using the Koszul dual $(\knx)^!$ by Lemma
\ref{xxlem1.2}, which is isomorphic to
$$k[x_1, \ldots, x_n]/(x_1^2, \ldots, x_n^2 ),$$ and found to be
$${\rm Tr}_A(g,t) = \frac{1}{(1+t^2)(1-t)^{n-2}}
= (-1)^{n-2} t^{-n} + {\rm lower~terms}.$$
It follows from Lemma \ref{xxlem1.4} that the homological determinant
of $g$ is 1. Since $\mathfrak{S}_n$ is generated by transpositions,
$\hdet g=1$ for all $g\in \mathfrak{S}_n$.

By the last paragraph, $\hdet g=1$ for all $g\in G$. The
assertion follows from \cite[Theorem 3.3]{JoZ}.

(2) Since $\knx$ is a quantum polynomial ring, any quasi-reflection
$g$ has the homological determinant $\lambda\neq 1$ where $\lambda$
is given in \eqref{E1.3.1}. Since $\hdet g=1$ for all $g\in G$, $G$
contains no quasi-reflection (also see Lemma \ref{xxlem1.7}(4) below).
The assertion follows from \cite[Theorem 1.1]{KKZ2}.
\end{proof}

The analogous theorem is not true in the commutative case. As we
mentioned in the introduction, $\kpx^{\mathfrak{S}_n}$ is isomorphic
to the commutative polynomial ring $\kpx$, which is AS regular.
Hence Theorem \ref{xxthm1.5}(2) fails for $\kpx$. The next example
shows that Theorem \ref{xxthm1.5}(1) fails for $\kpx$.

\begin{example}
\label{xxex1.6} Set $n=4$. Let $G= \langle (1,2,3,4) \rangle$ be the
cyclic subgroup of $\mathfrak{S}_4$ generated by the 4-cycle
$(1,2,3,4)$. Then $G$ contains no reflections, and has elements of
determinant $-1$, so $B_{+}:=k[x_1, x_2, x_3, x_4]^G$ cannot be
Gorenstein by \cite{Wa}. Or, one can also use Molien's Theorem to
check that the Hilbert series of the fixed subring $B_{+}$ is
$$\frac{t^3+t^2-t +1}{(1-t)^4 (1+t)^2(1+t^2)},$$
which does not have the symmetry property required in Stanley's
criteria \cite[Theorem 4.4]{S2} for $B_{+}$ to be Gorenstein.

Note that for the same subgroup $G$, but acting on the
noncommutative ring $k_{-1}[x_1, \ldots, x_4]$, the Hilbert series
of the fixed ring $B_{-}:=k_{-1}[x_1, \ldots, x_4]^G$ is
$$\frac{(t^2-t+1) (t^6-2t^5+3t^4-2t^3+3t^2-2t+1)}{(1-t)^4(1+t^2)^2(1+t^4)},$$
which has the symmetry property, and hence is AS Gorenstein by a
noncommutative version of Stanley's criteria \cite[Theorem
6.2]{JoZ}, as well as by Theorem \ref{xxthm1.5}(1).
\end{example}

The trace of any permutation is computed as follows.

\begin{lemma}
\label{xxlem1.7} Let $\mathfrak{S}_n$ act on $A= \knx$ as
permutations and $g\in\mathfrak{S}_n$.
\begin{enumerate}
\item
If $g$ is an $m$-cycle, then
$${\rm Tr}_A(g,t) = \frac{1}{(1+ (-t)^m)(1-t)^{n-m}}.$$
\item
If $g = \nu_{i_1} \cdots \nu_{i_k}\mu_{j_1} \cdots \mu_{j_\ell}$ a
product of disjoint cycles of length $i_p$ and $j_p$, with
$\nu_{i_p}$ odd permutations and $\mu_{j_p}$ even permutations, then
$${\rm Tr}_A(g,t) = \frac{1}{(1+t^{i_1})\cdots(1+t^{i_k}) (1-t^{j_1})
\cdots (1-t^{j_\ell}) (1-t)^{n-({i_1}+ \cdots +{i_k}+ j_1 +
\cdots + j_\ell) } }.
$$
\item
The only quasi-bireflections of $\knx$ in $ \mathfrak{S}_n$ are the
two-cycles and three-cycles.
\item
Permutation groups (namely, subgroups of $\mathfrak{S}_n$)
contain no quasi-reflections.
\end{enumerate}
\end{lemma}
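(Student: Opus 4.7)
The plan is to prove (1) by transferring the computation to the Koszul dual via Lemma~\ref{xxlem1.2}, deduce (2) by multiplicativity across disjoint cycles, and then read off (3) and (4) from a short combinatorial analysis of the denominator produced in (2).

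For (1), recall from the proof of Theorem~\ref{xxthm1.5} that $A^! \cong k[x_1,\ldots,x_n]/(x_1^2,\ldots,x_n^2)$. This algebra is \emph{commutative}, with Hilbert series $(1+t)^n$ and basis $\{\prod_{i\in S} x_i : S\subseteq \{1,\ldots,n\}\}$, so any permutation $g\in\mathfrak{S}_n$ acts on $A^!$ by permuting squarefree monomials \emph{without any sign}. Consequently, $\tr(g^\tau|_{A^!_d})$ simply counts the $d$-element subsets of $\{1,\ldots,n\}$ fixed by $g$. If $g$ is an $m$-cycle with support $T$, $|T|=m$, then $g^\tau$ decouples across the tensor decomposition $A^! = k[x_i:i\in T]/(x_i^2) \otimes k[x_i: i\notin T]/(x_i^2)$: the second factor contributes its Hilbert series $(1+t)^{n-m}$, while on the first factor the only $g$-invariant subsets of $T$ are $\emptyset$ and $T$ itself, contributing $1+t^m$. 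Hence ${\rm Tr}_{A^!}(g^\tau,t)=(1+t^m)(1+t)^{n-m}$, and Lemma~\ref{xxlem1.2} yields the formula in (1).

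Part (2) follows from the same tensor-decomposition argument applied to a product of disjoint cycles: each cycle of length $\ell$ contributes a factor $(1+(-t)^\ell)$ in the denominator, which reads as $(1+t^{i_p})$ when $\ell=i_p$ is even (the odd permutations) and as $(1-t^{j_p})$ when $\ell=j_p$ is odd and $\ge 3$ (the even permutations of length $\ge 2$), while the $n-\sum i_p - \sum j_p$ fixed variables contribute the factor $(1-t)^{n-\sum i_p-\sum j_p}$.

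For (3) and (4), the plan is to extract the exact $(1-t)$-multiplicity in the denominator. Writing $1-t^{j_p}=(1-t)(1+t+\cdots+t^{j_p-1})$, the denominator from (2) takes the form $(1-t)^N p(t)$ with $p(1)\neq 0$, where
$$N=(n-M_o-M_e)+s,$$
$s$ is the number of cycles of odd length $\ge 3$, $M_o$ is their total length, and $M_e$ is the total length of the cycles of even length. A quasi-bireflection requires $N=n-2$, equivalently $M_o+M_e-s=2$; since $M_o\ge 3s$ and $M_e$ is either $0$ or $\ge 2$, the inequality $M_e+2s\le M_o+M_e-s=2$ forces either $s=0$ with $M_e=2$ (a single transposition) or $s=1$ with $M_e=0$ and $M_o=3$ (a single 3-cycle). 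A quasi-reflection requires $M_o+M_e-s=1$; the same inequalities give $M_e+2s\le 1$, so $s=0$ and $M_e=0$, leaving $M_o=1$, which is impossible since $M_o$ is a sum of cycle lengths $\ge 3$. The only genuine obstacle lies in organizing the Koszul-dual computation of (1); the rest is routine bookkeeping.
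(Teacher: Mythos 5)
Your proposal is correct and is essentially the paper's own (very tersely sketched) proof: part (1) via the Koszul dual $k[x_1,\ldots,x_n]/(x_1^2,\ldots,x_n^2)$ and Lemma \ref{xxlem1.2}, part (2) by multiplicativity of the trace over the disjoint cycles, and parts (3) and (4) by extracting the exact multiplicity of $(1-t)$ in the resulting denominator. The only (immaterial) variation is that you carry out the disjoint-cycle tensor decomposition on the commutative Koszul-dual side, where the subset-counting is sign-free, whereas the paper invokes a graded vector space decomposition of $\knx$ itself and multiplies the traces from part (1) --- the trace only depends on the graded vector space structure, so the two implementations coincide.
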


\begin{proof} (1) This follows from Lemma \ref{xxlem1.2} and
direct computations.

(2) This follows from part (1) and a graded vector space
decomposition of $\knx$.

(3,4) These are consequences of part (2).
\end{proof}

In \cite{KKZ4} we introduced several possible generalizations of a commutative
complete intersection.  We review these notions here.

\begin{definition}
\label{xxdef1.8}
Let $A$ be a connected graded noetherian algebra.
\begin{enumerate}
\item
We say $A$ is a {\it classical complete intersection} (or a {\it
cci}) if there is a connected graded noetherian AS regular algebra
$R$ and a sequence of regular normal homogeneous elements
$\{\Omega_1,\ldots,\Omega_n\}$ of positive degree such that $A$ is isomorphic
to $R/(\Omega_1,\ldots,\Omega_n)$. The minimum such $n$ is called the {\it
cci-number} of $A$ and denoted by $cci(A)$.
\item
We say $A$ is a {\it hypersurface} if $cci(A)\leq 1$.
\item
We say $A$ is a {\it complete intersection of noetherian type} (or
an {\it nci}) if the $\Ext$-algebra
$\Ext^*_{A}(k,k):=\bigoplus_{i\geq 0} \Ext^i_A(_Ak,_Ak)$ is
noetherian.
\item
We say $A$ is a {\it complete intersection of growth type} (or a
{\it gci}) if the $\Ext$-algebra $\Ext^*_{A}(k,k)$ has finite
Gelfand-Kirillov dimension.
\item
We say $A$ is a {\it weak complete intersection} (or a {\it wci}) if
the $\Ext$-algebra $\Ext^*_{A}(k,k)$ has subexponential growth.
\end{enumerate}
\end{definition}

In \cite{KKZ4} we showed that a property of all of these kinds
of complete intersections is the cyclotomic Gorenstein property
defined below.

\begin{definition}
\label{xxdef1.9}
Let $A$ be a connected graded noetherian algebra.
\begin{enumerate}
\item
We say $A$ is {\it cyclotomic} if its Hilbert series $H_A(t)$ is a
rational function $p(t)/q(t)$ for some coprime polynomials
$p(t),q(t)\in {\mathbb Z}(t)$ and the roots of $p(t)$ and $q(t)$ are
roots of unity.
\item
We say $A$ is {\it cyclotomic Gorenstein} if the following
conditions hold
\begin{enumerate}
\item[(i)]
$A$ is AS Gorenstein;
\item[(ii)]
$A$ is cyclotomic.
\end{enumerate}
\end{enumerate}
\end{definition}

\begin{theorem}
\label{xxthm1.10} \cite[Theorem 3.4]{KKZ4} Let $A$ be $R^G$ for some
noetherian Auslander regular algebra $R$ and a finite subgroup
$G\subset \Aut(R)$. If $A$ is any of the kinds of complete
intersection in Definition \ref{xxdef1.8}, then it is cyclotomic
Gorenstein.
\end{theorem}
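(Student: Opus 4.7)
The statement has two conclusions to establish for $A = R^G$ — AS Gorenstein and cyclotomic — and the natural strategy is to treat them separately while exploiting the hierarchy $\text{cci} \Rightarrow \text{hypersurface}$, and separately $\text{cci} \Rightarrow \text{nci} \Rightarrow \text{gci} \Rightarrow \text{wci}$ among the weaker conditions.

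For the AS Gorenstein conclusion, the cci case is essentially immediate: if $A = S/(\Omega_1, \ldots, \Omega_n)$ where $S$ is AS regular and the $\Omega_i$ form a regular normal sequence of homogeneous positive-degree elements, then AS Gorenstein-ness passes from $S$ to the quotient by standard Rees-module / change-of-rings spectral sequence arguments. For the weaker types defined via the Ext-algebra, the plan is to use the hypothesis that $A = R^G$: by the noncommutative Watanabe-type theorem \cite[Theorem 3.3]{JoZ} invoked already in the proof of Theorem \ref{xxthm1.5}, $R^G$ is AS Gorenstein precisely when $\hdet g = 1$ for every $g \in G$. So it suffices to show, contrapositively, that if some $g \in G$ has nontrivial homological determinant then $R^G$ cannot even be a wci, which should be done by comparing $\Ext^*_{R^G}(k,k)$ to the smash product $\Ext^*_R(k,k) \# G$ and observing that a nontrivial $\hdet$ forces the Ext-algebra of $R^G$ to have at least exponential growth.

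For the cyclotomic conclusion, I would start from the Molien-type trace formula
$$H_{R^G}(t) = \frac{1}{|G|} \sum_{g \in G} \mathrm{Tr}_R(g, t).$$
Each $\mathrm{Tr}_R(g, t)$ is a rational function whose numerator and denominator, after reduction, factor over cyclotomic polynomials: the Hilbert series of the Auslander regular algebra $R$ is itself cyclotomic, and the eigenvalues of $g$ on $R_1$ are roots of unity because $|g|$ divides $|G|$. Averaging preserves cyclotomicity of the denominator once we clear to a common denominator; cyclotomicity of the numerator is the delicate point and is where the complete intersection hypothesis intervenes. In the cci case, the explicit formula $H_A(t) = H_S(t) \prod_i (1 - t^{d_i})$ settles it directly. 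For nci/gci/wci, I would instead invoke the functional identity $H_A(t) \cdot H_{\Ext^*_A(k,k)}(-t) = 1$ (in the Koszul case, or the appropriate Poincar\'e series analog in general), noting that finiteness or subexponential control on $\Ext^*_A(k,k)$ rules out non-cyclotomic factors in $H_A(t)$.

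The main obstacle is the wci case, where the Ext-algebra is only known to have subexponential growth — a much weaker input than noetherianness or finite GK-dimension. Extracting cyclotomicity of $H_A(t)$ from this requires delicate asymptotic analysis of the Hilbert coefficients of $\Ext^*_A(k,k)$, and it is here that Auslander regularity of $R$ (rather than just AS regularity) should play the crucial role, since Auslander regularity gives sharp duality-based control on $\Ext^*_{R^G}(k,k)$ beyond what AS regularity alone would provide.
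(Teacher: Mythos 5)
You should first be aware that the paper contains no proof of Theorem \ref{xxthm1.10}: it is quoted from \cite[Theorem 3.4]{KKZ4}, so your proposal can only be judged against that cited argument and on its own internal correctness --- and on the latter count the Gorenstein half has a decisive error. \cite[Theorem 3.3]{JoZ} says that triviality of the homological determinant on $G$ is \emph{sufficient} for $R^G$ to be AS Gorenstein; it is not an equivalence, so your reading ``$R^G$ is AS Gorenstein precisely when $\hdet g=1$ for every $g\in G$'' is wrong, and the contrapositive plan --- some $g$ with $\hdet g\neq 1$ forces $\Ext^*_{R^G}(k,k)$ to grow exponentially, so $R^G$ is not even a wci --- is simply false. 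Take $R=k[x_1,x_2]$ (Auslander regular) and $G=\langle g\rangle$ with $g\colon x_1\mapsto x_1$, $x_2\mapsto -x_2$: then $\hdet g=-1$, yet $R^G=k[x_1,x_2^2]$ is a polynomial ring, hence AS regular, a cci (so also nci, gci, wci), and its Ext-algebra is a finite-dimensional exterior algebra. Quasi-reflections are exactly the elements your argument would need to exclude, and they cannot be excluded a priori. The Gorenstein conclusion must instead come out of the Hilbert-series analysis itself: the route in \cite{KKZ4} uses that $A=R^G$ is AS Cohen--Macaulay (in characteristic zero $A$ is a direct summand of $R$, whose local cohomology is concentrated in one degree) together with the noncommutative Stanley criterion \cite[Theorem 6.2]{JoZ} --- the same tool this paper invokes in Example \ref{xxex1.6} --- so Gorensteinness is read off from the symmetry of $H_A(t)$, the reverse of your logical order. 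Your reduction chain is also off: ``cci $\Rightarrow$ hypersurface'' is backwards (by Definition \ref{xxdef1.8}(2) a hypersurface \emph{is} a cci with $cci(A)\leq 1$), and ``nci $\Rightarrow$ gci'' is not a known implication; the usable reduction is that nci and gci each imply wci (the former by the Stephenson--Zhang theorem that noetherian connected graded algebras have subexponential growth), while the cci case is handled directly, essentially as you do.

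In the cyclotomic half your mechanism is the right one but is misstated at the key point and stops short of the conclusion. The identity $H_A(t)\cdot H_{\Ext^*_A(k,k)}(-t)=1$ holds only for Koszul algebras; in general one has only the Euler-characteristic identity $\sum_{i,j}(-1)^i\dim\Tor^A_{i,j}(k,k)\,t^j=H_A(t)^{-1}$, hence the coefficientwise bound $\bigl|[t^j]\,H_A(t)^{-1}\bigr|\leq \sum_i\dim\Tor^A_{i,j}(k,k)$. Subexponential growth (wci) is then \emph{exactly} calibrated --- no delicate asymptotic analysis is required --- to make $H_A(t)^{-1}$ converge on the open unit disc, i.e.\ to force the numerator $p(t)$ of $H_A(t)=p(t)/q(t)$ to have no zero of modulus less than one. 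But ``modulus at least one'' is not ``root of unity'': the missing step is that $p(t)\in\mathbb{Z}[t]$ with $p(0)=1$, so the product of the moduli of its roots is $1/|\mathrm{lc}(p)|\leq 1$, forcing all roots onto the unit circle, after which Kronecker's theorem yields roots of unity. The same mechanism underlies facts you take for granted: that $H_R(t)$ is cyclotomic for noetherian regular $R$ is itself proved this way, and ``averaging preserves cyclotomicity of the denominator'' in Molien's formula is precisely the nontrivial point, since one must show the poles of each individual trace $\mathrm{Tr}_R(g,t)$ are roots of unity, using rationality of traces \cite{JiZ}, the bound of trace coefficients by $\dim R_i$, and integrality and functional-equation arguments as in \cite[Lemma 2.6]{JoZ}. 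This --- not any duality-based control of $\Ext^*_{R^G}(k,k)$ --- is where the Auslander regularity of $R$ earns its keep.
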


We note that in Example \ref{xxex1.6} although the fixed subring
$A^G$ is AS Gorenstein, it is not any of the kinds of  generalized
``complete intersection" of Definition \ref{xxdef1.8} since its
Hilbert series has zeros that are not roots of unity.

The following theorem of Kac-Watanabe-Gordeev is one of the motivations
for this paper.

\begin{theorem}
\label{xxthm1.11} \cite{KW,G1} Let $G$ be a finite group acting
linearly on $\kpx$. If $\kpx^G$ is a complete intersection, then $G$
is generated by bireflections.
\end{theorem}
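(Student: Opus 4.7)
The plan is to argue by contradiction: assume $R:=\kpx^G$ is a complete intersection while $G$ is not generated by its bireflections (where a bireflection means $g\in G$ with $\mathrm{codim}_V(V^g)\leq 2$). I would first reduce to the case that $G$ contains no genuine reflections. Let $N\trianglelefteq G$ be the normal subgroup generated by all reflections of $G$. By the Shephard-Todd-Chevalley theorem, $\kpx^N$ is a polynomial algebra $S=k[y_1,\ldots,y_n]$, and the residual action of $\bar G:=G/N$ on $S$ satisfies $S^{\bar G}=R$ with $\bar G$ free of reflections. Because reflections already lie in the subgroup generated by bireflections, and because bireflections of $\bar G$ pull back to elements of $G$ whose fixed subspace has codimension at most two, proving the theorem for $(\bar G,S)$ suffices.

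Under this reduction, if $G$ is not generated by bireflections then there exists $1\neq g\in G$ with $c:=\mathrm{codim}_V(V^g)\geq 3$. Geometrically, the image $\pi(V^g)$ under the quotient map $\pi\colon V\to V/G=\Spec R$ is a closed stratum of the singular locus of codimension $c$. Localizing and completing $R$ at a generic point of this stratum, Luna-type slicing identifies the local structure of $R$ with (a smooth factor times) the invariant ring of the pointwise stabilizer $G_{V^g}$ acting on a transverse slice $V/V^g\cong k^c$. Since the complete intersection property is preserved under localization and completion, we are reduced to showing that no finite subgroup of $\GL_c(k)$ with $c\geq 3$ that acts faithfully without reflections or bireflections produces a complete intersection invariant ring.

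The technical heart is Gulliksen's characterization of complete intersections via deviations: a noetherian local ring is a complete intersection if and only if $\epsilon_i=0$ for all $i\geq 3$, equivalently $\epsilon_3=0$. Combining Molien's formula
\begin{equation*}
H_R(t)=\frac{1}{|G|}\sum_{h\in G}\frac{1}{\det(1-ht)}
\end{equation*}
with the Gulliksen-Levin factorization $P_R(t)=\prod_{i\geq 1}(1+t^{2i-1})^{\epsilon_{2i-1}}/(1-t^{2i})^{\epsilon_{2i}}$ of the Poincar\'e-Betti series $P_R(t)=\sum_i \dim_k \Tor_i^R(k,k)\,t^i$, one can extract the low deviations $\epsilon_2,\epsilon_3$ from the character data of the stabilizer $G_{V^g}$ on $V/V^g$. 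The plan is to show that each conjugacy class of nontrivial elements with fixed-space codimension exactly $c\geq 3$ contributes a strictly positive summand to $\epsilon_3$, contradicting the complete intersection hypothesis.

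The main obstacle is the final positivity step: ruling out cancellation among contributions from different conjugacy classes. This is where the substantive combinatorial content of the Kac-Watanabe-Gordeev analysis lies; one anchors the contradiction by invoking the classification of two- and three-dimensional quotient singularities that happen to be complete intersections, together with an inductive reduction on the size of the stabilizers along the stratification of $V/G$ by fixed-point loci, to close off the cases where the deviation computation is most delicate.
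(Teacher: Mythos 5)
Note first that the paper contains no proof of this statement: Theorem \ref{xxthm1.11} is quoted with attribution to Kac--Watanabe \cite{KW} and Gordeev \cite{G1}, so your attempt must be measured against the literature rather than an internal argument. Measured that way, it has a genuine gap at its declared heart. The deviations $\epsilon_i$ of $R=\kpx^G$ are invariants of the ring structure --- they are read off the minimal free resolution of $k$ over $R$, i.e.\ from the Poincar\'e--Betti series $P_R(t)$ --- whereas Molien's formula produces only the Hilbert series $H_R(t)$. There is no mechanism by which ``character data of the stabilizer'' determines $P_R(t)$, hence none by which it determines $\epsilon_2$ or $\epsilon_3$: two graded algebras can share a Hilbert series while one is a complete intersection and the other is not (the paper's own use of Hilbert series goes only in the contrapositive direction, via the cyclotomic criterion of Theorem \ref{xxthm1.10}). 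So the positivity claim that each class of elements with $\operatorname{codim} V^g\geq 3$ contributes positively to $\epsilon_3$ is not a computation you have set up, and your fallback --- invoking the classification of low-dimensional quotient complete intersections --- is circular: that classification (Gordeev \cite{G2}, Nakajima \cite{N1,N2,NW}) was built \emph{on} the Kac--Watanabe--Gordeev theorem, as the introduction of this paper itself notes.

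Two of your reductions also fail as stated. In the reflection reduction, $S=\kpx^N$ is a polynomial ring on generators of assorted degrees, so $\bar G=G/N$ acts by graded but generally nonlinear automorphisms; ``bireflection of $\bar G$'' is then undefined in the linear sense you use, and the claim that such elements pull back to elements of $G$ with $\operatorname{codim} V^g\leq 2$ is unjustified. In the slice reduction, the pointwise stabilizer of $V^g$ acting on the transverse slice may well contain reflections and bireflections even when $\operatorname{codim} V^g\geq 3$ --- e.g.\ a $4$-cycle in $\mathfrak{S}_4$ on the $3$-dimensional reflection representation has $V^g=0$, yet the invariant ring is polynomial --- so localizing at a stratum does not reduce you to groups ``without reflections or bireflections.'' The mere existence of one element of large codimension is compatible with the complete intersection property; the content of the theorem is the global generation statement. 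This is precisely why the actual proof of Kac--Watanabe runs differently: with $H\trianglelefteq G$ the subgroup generated by the bireflections, the map $V/H\to V/G$ is \'etale in codimension two, and Grothendieck's purity theorem --- the punctured spectrum of a local complete intersection of dimension at least three is simply connected --- forces $H=G$, with no deviation computation at all. If you want a homological route, you would need to replace your Molien step by an argument controlling $P_R(t)$ itself, which is a much harder problem than controlling $H_R(t)$.
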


A noncommutative version of Kac-Watanabe-Gordeev Theorem holds for
skew polynomial rings $\kqx$ when $q\neq \pm 1$ \cite[Theorem
0.3]{KKZ4}, that leaves $\knx$ the only unknown case. In this paper
we will prove some partial results for this special skew polynomial
ring. We note that in Example \ref{xxex1.6} the trace of a
four-cycle acting on  $k_{-1}[x_1, \ldots, x_4]$ is $1/(1+t^4)$,
which is not a quasi-bireflection, supporting a generalization of
the Kac-Watanabe-Gordeev Theorem.

To conclude this section we compute the automorphism group
$\Aut(\knx)$.

\begin{lemma}
\label{xxlem1.12}
\begin{enumerate}
\item
$g\in \Aut(\knx)$ if and
only if $g(x_i)=a_i x_{\sigma(i)}$ for some $\sigma\in {\mathfrak
S}_n$ and $\{a_i\}_{i=1}^n \subset k^\times$, namely,
$\Aut(\knx)=(k^\times)^n \rtimes {\mathfrak S}_n$.
\item
If $g$ is of the form in part (a), then $\hdet g=\prod_{i=1}^{n} a_i$.
\end{enumerate}
\end{lemma}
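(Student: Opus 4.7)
The plan is to handle part~(1) by a short linear-algebra argument using only the quadratic relations of $\knx$, and part~(2) by factoring $g$ into a permutation and a diagonal part and combining multiplicativity of $\hdet$ with Lemma~\ref{xxlem1.4}.

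For part~(1), any graded algebra automorphism $g$ is determined by its restriction to $A_1=\bigoplus_{i=1}^n kx_i$, so I would write $g(x_i)=\sum_j a_{ij}x_j$ and let $M=(a_{ij})$, which must be invertible. The main step is to expand $g(x_i)g(x_j)+g(x_j)g(x_i)$ in the PBW basis $\{x_k^2\}\cup\{x_kx_l\suchthat k<l\}$ of $A_2$. For $i\neq j$ and $k<l$, the coefficient of $x_kx_l$ in $g(x_i)g(x_j)$ is $a_{ik}a_{jl}-a_{il}a_{jk}$ and the coefficient in $g(x_j)g(x_i)$ is the negative of this, so the off-diagonal terms cancel pairwise, leaving
\[
g(x_i)g(x_j)+g(x_j)g(x_i)=2\sum_{k}a_{ik}a_{jk}\,x_k^2.
\]
Since $\{x_k^2\}$ is linearly independent and $2$ is invertible in $k$, relation \eqref{E0.0.1} forces $a_{ik}a_{jk}=0$ for every $k$ and every $i\neq j$. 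Hence each column of $M$ has at most one nonzero entry, and combined with $\det M\neq 0$ this forces $M$ to be a monomial matrix, yielding the asserted form $g(x_i)=a_ix_{\sigma(i)}$. The converse---that any such map extends to an algebra automorphism---is an immediate check on the defining relations, and the semidirect product structure $(k^\times)^n\rtimes\mathfrak{S}_n$ then follows because the diagonal torus is normal with $\mathfrak{S}_n$ acting by permutation of coordinates.

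For part~(2), write $g=P\circ D$ with $P(x_i)=x_{\sigma(i)}$ and $D(x_i)=a_ix_i$, and use that $\hdet$ is a group homomorphism. The proof of Theorem~\ref{xxthm1.5} already gives $\hdet P=1$ (as $\hdet$ is trivial on all of $\mathfrak{S}_n$), so it remains to compute $\hdet D$. Since $D$ acts diagonally on the ordered monomial basis of $\knx$, its trace factors as
\[
{\rm Tr}_A(D,t)=\prod_{i=1}^{n}\frac{1}{1-a_i t}.
\]
Expanding as a Laurent series in $t^{-1}$, the leading term is $(-1)^n\bigl(\prod_i a_i\bigr)^{-1}t^{-n}$. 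Since $\knx$ is a quantum polynomial ring of dimension $n$ (so its AS index is $n$), Lemma~\ref{xxlem1.4} identifies this leading coefficient with $(-1)^n(\hdet D)^{-1}$, giving $\hdet D=\prod_i a_i$ and hence $\hdet g=\prod_i a_i$. The only substantive obstacle is the column-monomial deduction in part~(1); everything else is either a routine calculation on a basis or an immediate appeal to the cited lemmas.
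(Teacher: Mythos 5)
Your proposal is correct, but it takes a genuinely different route from the paper in part (1). There the paper simply verifies the easy inclusion $(k^\times)^n \rtimes \mathfrak{S}_n \subset \Aut(\knx)$ and cites \cite[Lemma 3.5(e)]{KKZ2} for the hard inclusion $\Aut(\knx)\subset (k^\times)^n \rtimes \mathfrak{S}_n$; you instead prove that inclusion from scratch, and your computation checks out: writing $g(x_i)=\sum_j a_{ij}x_j$, the identity $g(x_i)g(x_j)+g(x_j)g(x_i)=2\sum_k a_{ik}a_{jk}\,x_k^2$ is correct in the PBW basis of $A_2$ (the off-diagonal coefficients $a_{ik}a_{jl}-a_{il}a_{jk}$ do cancel against their negatives), and $a_{ik}a_{jk}=0$ for all $i\neq j$ together with invertibility of $M$ forces a monomial matrix, as you say. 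Your argument is self-contained and elementary but uses $2\in k^\times$ (harmless here, since the standing hypothesis in Section 1 is $\operatorname{char}k=0$), while the paper's citation buys brevity and rests on a more general classification of automorphisms of skew polynomial rings. In part (2) your structure matches the paper exactly --- factor $g$ as diagonal times permutation, use that $\hdet$ is a group homomorphism, and get $\hdet P=1$ from the proof of Theorem \ref{xxthm1.5} --- except that where the paper dismisses the diagonal case with ``it is easy to see,'' you make it precise by computing ${\rm Tr}_A(D,t)=\prod_{i=1}^n (1-a_it)^{-1}$ on the monomial basis and extracting $\hdet D=\prod_i a_i$ from the leading term $(-1)^n\bigl(\prod_i a_i\bigr)^{-1}t^{-n}$ via Lemma \ref{xxlem1.4}; this is exactly the mechanism the paper itself uses for transpositions in Theorem \ref{xxthm1.5}, and the $k$-rationality hypothesis of Lemma \ref{xxlem1.4} is satisfied since $\knx$ is AS regular, so this step is sound.
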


\begin{proof} (a) Every diagonal map $g: x_i\to a_i x_i$,
for $(a_1,\cdots,a_n)\in (k^\times)^n$, extends easily to a unique
graded algebra automorphism of $\knx$. And we have already seen that
${\mathfrak S}_n$ is a subgroup of $\Aut(\knx)$ such that
${\mathfrak S}_n\cap (k^\times)^n=\{1\}$. Thus $(k^\times)^n \rtimes
{\mathfrak S}_n\subset \Aut(\knx)$. By \cite[Lemma 3.5(e)]{KKZ2},
$\Aut(\knx) \subset (k^\times)^n \rtimes {\mathfrak S}_n$. The
assertion follows.

(b) If $g\in (k^\times)^n$, or $g(x_i)\to a_i x_i$ for
$(a_1,\cdots,a_n)\in (k^\times)^n$, then it is easy to see that
$\hdet g=\prod_{i=1}^n a_i$. If $g\in {\mathfrak S}_n$, then $\hdet
g=1$ by the proof of Theorem \ref{xxthm1.5}(a). The assertion
follows by the fact $\hdet$ is a group homomorphism.
\end{proof}

\section{Upper bound for the algebra generators}
\label{zzsec2}

In this section we show that Broer's and G\"{o}bel's upper bounds on
the degrees of minimal generating sets of $\kpx^G$, for arbitrary
subgroup $G\subset\mathfrak{S}_n$, have analogues in this context.
In this section we do not assume that ${\rm {char}}\; k=0$.

The Noether upper bound on the degrees of generators does not hold
for $\knx$, as $k_{-1}[x_1,x_2]^{\mathfrak{S}_2}$ requires a
generator of degree 3 [Example \ref{zzex3.1}]. More generally one
can ask if the degrees of generators of $\knx^G$ are bounded above
by $|G|$ times the dimension of the representation of $G$. Broer's
degree bound \cite[Proposition 3.8.5]{DK} states that when $f_i$ are
primary invariants, i.e. $f_i$, for $1 \leq i \leq n$, are
algebraically independent and $\kpx^G$ is a finite module over
$k[f_1, \ldots, f_n]$, then $\kpx^G$ is generated as an algebra by
homogeneous invariants of degrees at most
$$\text{deg}(f_1) + \cdots + \text{deg}(f_n) - n.$$
(The above statement is not true when $n=2$ and
$g: x_1\to x_1, x_2\to -x_2$. In this case $f_1=x_1, f_2=x_2^2$.
Therefore we need to assume $n\geq 3$.)
We show that this result
generalizes for any group $G$ (not necessarily a permutation group)
when the given hypotheses are satisfied [Lemma \ref{zzlem2.2}].


Let $A$ be any connected graded algebra. Define $d_A$ to be the
maximal degree of $A_{\geq 1}/(A_{\geq 1})^2$. Then $A$ is generated
as an algebra by homogeneous elements of degree at most $d_A$.

\begin{lemma}
\label{zzlem2.1} Let $A$ be a noetherian connected graded AS
Gorenstein algebra and $B$ and $C$ be graded subalgebras of $A$ such
that $C\subset B\subset A$. Assume that
\begin{enumerate}
\item[(i)]
$A=B\oplus D$ as a right graded $B$-modules,
\item[(ii)]
$A$ is a finitely generated right $C$-module, and
\item[(iii)]
There is a noetherian AS regular algebra $R$ and a surjective graded
algebra map $\phi: R\to C$ and $\gldim R=\injdim A$.
\end{enumerate}
Then
\begin{enumerate}
\item
$\phi$ is an isomorphism and $A_C$ is free.
\item
$d_B\leq \max\{d_C, \bfl_C-\bfl_A\}$ where $\bfl_A$ and $\bfl_C$ are
AS index of $A$ and $C$ respectively.
\end{enumerate}
\end{lemma}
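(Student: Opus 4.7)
The plan is to deduce (1) via an Auslander-Buchsbaum-type computation, then extract (2) from the Hilbert series symmetries of the two AS Gorenstein algebras and the resulting freeness of $A$ over $C$.

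For part (1), I would first give $A$ the structure of a finitely generated graded right $R$-module through the composition $R \twoheadrightarrow C \hookrightarrow A$, possible by hypothesis (ii); then $\pdim_R A \leq \gldim R = d = \injdim A$. Because $A$ is AS Gorenstein of injective dimension $d$, its local cohomology with respect to its own augmentation ideal is concentrated in degree $d$, and since $\phi$ sends the augmentation ideal of $R$ onto that of $C$, which generates the augmentation ideal of $A$, the same calculation shows $\depth_R A = d$. The graded noncommutative Auslander-Buchsbaum equality $\pdim_R A + \depth_R A = d$ then forces $\pdim_R A = 0$, so $A$ is a finitely generated projective, hence free, right $R$-module. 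If $0 \neq r \in \ker \phi$, then $a\cdot r = 0$ for every $a \in A$, which is impossible in a nonzero free right $R$-module. Hence $\phi$ is injective, so an isomorphism, and $A$ is free as a right $C$-module.

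For part (2), write $A = \bigoplus_i C(-e_i)$ as a right $C$-module and set $H_V(t) = \sum_i t^{e_i}$, so that $H_A(t) = H_C(t)\, H_V(t)$. Both $A$ and $C$ are AS Gorenstein of the same injective dimension $d$, hence satisfy the functional equation $H(t^{-1}) = (-1)^d t^{\bfl} H(t)$ at their respective AS indices. Dividing the equation for $A$ by that for $C$ yields
$$H_V(t^{-1}) = t^{\bfl_A - \bfl_C} H_V(t),$$
and since $H_V$ is a polynomial with nonnegative integer coefficients and constant term at least $1$, comparing extreme-degree terms gives $\bfl_C - \bfl_A = \max_i e_i \geq 0$.

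Hypothesis (i), $A = B \oplus D$ as right $B$-modules, restricts to a decomposition of right $C$-modules, so $B$ is a graded $C$-summand of the free $C$-module $A$. Hence $B$ is finitely generated projective and, over the connected graded noetherian ring $C$, actually free: $B = \bigoplus_j C(-f_j)$ with $\{f_j\}$ a sub-multiset of $\{e_i\}$, and in particular $\max_j f_j \leq \bfl_C - \bfl_A$. Choosing homogeneous module generators $b_j$ of $B$ over $C$ of degrees $f_j$, every element of $B$ is a $C$-linear combination of the $b_j$, so $B$ as a $k$-algebra is generated by the $b_j$ together with algebra generators of $C$, yielding $d_B \leq \max\{d_C, \bfl_C - \bfl_A\}$. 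The main obstacle is step one: invoking the correct noncommutative Auslander-Buchsbaum formula and verifying $\depth_R A = d$; once freeness of $A$ over $C$ is established, the rest is Hilbert-series bookkeeping.
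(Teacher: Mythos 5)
Your proposal is correct in substance, but part (1) replaces the paper's mechanism with a different one. The paper proves freeness of $A$ over $R$ directly: from the composite $f: R\to C\to A$ it invokes the Wu--Zhang change-of-rings spectral sequence $\Ext^p_A(\Tor^R_q(A,k),A)\Longrightarrow \Ext^{p+q}_R(k,A)$; since each $\Tor^R_q(A,k)$ is finite dimensional and $A$ is AS Gorenstein of injective dimension $n=\gldim R$, the sequence collapses to $\Ext^n_A(\Tor^R_q(A,k),A)\cong \Ext^{n+q}_R(k,A)$, and the vanishing of the right-hand side for $q>0$ forces $\Tor^R_q(A,k)=0$, i.e.\ $A_R$ is projective, hence free. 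You instead derive $\pdim_R A=0$ from $\depth_R A=d$ together with the noncommutative graded Auslander--Buchsbaum equality. That route works, but it outsources to two nontrivial inputs the paper's argument avoids: J{\o}rgensen's Auslander--Buchsbaum formula for connected graded noetherian algebras, and the fact that a noetherian AS Gorenstein algebra has local cohomology concentrated in degree $d$ (plus the agreement of $\Ext$-depth with local-cohomology depth). What you gain is a conceptually transparent slogan (``maximal depth forces projective dimension zero''); what the paper gains is self-containedness, since its spectral-sequence collapse is in effect a direct proof of exactly the instance of Auslander--Buchsbaum needed.

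One justification in your part (1) is false as written: the augmentation ideal of $C$ does \emph{not} in general generate the augmentation ideal of $A$ (already $C=k[x^2]\subset A=k[x]$ fails). What you need, and what is true, is only that $A$ is a finite right $C$-module, so $A/\mathfrak{m}_C A$ is finite dimensional; hence $\mathfrak{m}_R$-torsion and $\mathfrak{m}_A$-torsion coincide on graded $A$-modules, the two local cohomologies of $A$ agree, and $\depth_R A=d$ follows. With that repair part (1) is sound. Your part (2) is essentially the paper's argument: the paper likewise splits $B$ (and $D$) off as graded free $C$-modules and bounds the algebra generators of $B$ by $\max\{d_C,\deg H_{V_B}(t)\}$; the only cosmetic difference is that you divide the two functional equations $H(t^{-1})=\pm t^{\bfl}H(t)$ to obtain $H_V(t^{-1})=t^{\bfl_A-\bfl_C}H_V(t)$ and read off $\max_i e_i=\bfl_C-\bfl_A$ from the extreme terms, whereas the paper deduces $\deg H_A(t)=-\bfl_A$ and $\deg H_C(t)=-\bfl_C$ separately from the proof of \cite[Theorem 6.1]{JoZ} and subtracts --- the same bookkeeping, resting on the same cited functional equation.
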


\begin{proof} (1) Let $n=\injdim A$. Induced by the composite map
$f: R\to C \to A$ we have a convergent spectral sequence \cite[Lemma
4.1]{WZ},
$$\Ext^p_A(\Tor^R_q(A, k), A)\Longrightarrow \Ext^{p+q}_R(k,A).$$
Since $A_R$ is finitely generated and $R$ is right noetherian,
$\Tor^R_q(A,k)$ is finite dimensional for all $q$. Thus
$\Ext^p_A(\Tor^R_q(A, k), A)=0$ for all $p\neq \injdim A=n$. The
above spectral sequence collapses to the following isomorphisms
$$\Ext^n_A(\Tor^R_q(A, k), A)\cong \Ext^{n+q}_R(k,A).$$
For any $q>0$, $\Ext^n_A(\Tor^R_q(A, k), A)\cong
\Ext^{n+q}_R(k,A)=0$. Since $\Tor^R_q(A, k)$ is finite dimensional,
$A$ is AS Gorenstein, we obtain that
$$\dim \Tor^R_q(A, k)=\dim \Ext^n_A(\Tor^R_q(A, k), A)=0$$
for all $q>0$. Hence $A_R$ is projective, whence free, as $R$ is
connected graded. As a consequence, $f: R\to A$ is injective. This
implies that $\phi$ is an isomorphism. Since $\phi$ is an
isomorphism and $A_R$ is free, $A_C$ is free.

(2) Now we identify $R$ with $C$. By part (1), $A$ is a finitely
generated free $C$-module. Since $A=B\oplus D$, both $B$ and $D$ are
projective, whence free, graded right $C$-modules. Pick a $C$-basis
for $B$ and $D$, say $V_B\subset B$ and $V_D\subset D$. Then we have
$B=V_B\otimes C$ and $D=V_D\otimes C$. Therefore $A=V_A\otimes C$
where $V_A=V_B\oplus V_D$. Hence
$$H_A(t)=H_{V_{A}}(t)H_C(t)=(H_{V_B}(t)+H_{V_{D}}(t))H_C(t),
\quad {\text{and}}\quad H_B(t)=H_{V_B}(t)H_C(t).$$ Since
$B=V_B\otimes C$, $B$ is generated by $V_B$ and $C$ as a graded
algebra. Thus we have
$$d_{B}\leq \max\{ \deg H_{V_B}(t), d_C\}\leq \max\{ \deg H_{V_A}(t),
d_C\}.$$ It remains to show that $\deg H_{V_A}(t)=-\bfl_A+\bfl_C$.
First, as $H_A(t)=H_{V_A}(t) H_C(t)$, we have $\deg H_{V_A}(t)= \deg
H_A(t)-\deg H_C(t)$. Recall that $C$ is noetherian and AS regular.
Since $A$ is a finite module over $C$, $H_A(t)$ is rational and the
hypotheses (1$^\circ$, 2$^\circ$, 3$^\circ$) of  \cite[Theorem
6.1]{JoZ} hold. By the proof of \cite[Theorem 6.1]{JoZ} (we are not
using the hypothesis that $A$ is a domain),
$$H_A(t)=\pm t^{\bfl_A} H_A(t^{-1})$$
where $\bfl$ is the AS index of $A$. Since $H_A(t)$ is a rational
function such that $H_A(0)=1$, the above equation forces that
\begin{equation}\label{E2.1.1}\tag{E2.1.1}
\deg H_A(t)=-\bfl_A.
\end{equation}
Similarly, $\deg H_C(t)=-\bfl_C$. The assertion follows.
\end{proof}

The degree of algebra generators of $B$ is bounded by
$\bfl_C-\bfl_A$ when $d_C\leq \bfl_C-\bfl_A$, which is easy to
achieve in many cases. The following lemma is a generalization of
Broer's upper bound \cite[Proposition 3.8.5]{DK}.

\begin{lemma}[Broer's Bound]
\label{zzlem2.2} Let $A$ be a quantum polynomial algebra of
dimension $n$ and $C$ an iterated Ore extension
$k[f_1][f_2;\tau_2,\delta_2]\cdots [f_n;\tau_n,\delta_n]$. Assume
that
\begin{enumerate}
\item
$B=A^{H}$ where $H$ is a semisimple Hopf algebra acting on $A$,
\item
$C\subset B\subset A$ and $A_C$ is finitely generated, and
\item
$\deg f_{i}>1$ for at least two distinct $i$'s.
\end{enumerate}
 Then
$$d_{A^H}\leq \bfl_C-\bfl_A=\sum_{i=1}^n \deg f_i -n.$$
\end{lemma}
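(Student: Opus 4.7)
The plan is to apply Lemma~\ref{zzlem2.1} with the external algebra $R$ taken to be $C$ itself and $\phi=\id_C$, and then to check that the second term dominates in the maximum produced by Lemma~\ref{zzlem2.1}(2). First I would verify that $C$ is AS regular: since $C$ is a graded iterated Ore extension of $k$ by graded automorphisms and graded derivations, it is a noetherian AS regular algebra of global dimension $n$, with Hilbert series
$$H_C(t)=\prod_{i=1}^n \frac{1}{1-t^{\deg f_i}}.$$
Combining this with formula \eqref{E2.1.1} (applied to $C$, which is its own regular cover) gives $\bfl_C=\sum_{i=1}^n \deg f_i$. Since $A$ is quantum polynomial of dimension $n$, $H_A(t)=(1-t)^{-n}$, hence $\bfl_A=n$ and $\injdim A=\gldim A=n=\gldim C$. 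Thus hypothesis (iii) of Lemma~\ref{zzlem2.1} is satisfied. Hypothesis (ii) is hypothesis (2) of the present lemma. For hypothesis (i), semisimplicity of $H$ provides the usual Reynolds operator $\pi:A\to A^H=B$, a $B$-bimodule retraction of the inclusion, giving a splitting $A=B\oplus D$ of right graded $B$-modules.

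Lemma~\ref{zzlem2.1}(2) then yields
$$d_B\;\le\;\max\{\,d_C,\;\bfl_C-\bfl_A\,\}\;=\;\max\bigl\{\,\textstyle\max_i \deg f_i,\;\sum_i \deg f_i - n\,\bigr\}.$$
To reach the stated bound it remains to verify the elementary inequality $\max_i\deg f_i\le \sum_i \deg f_i-n$. Writing $m_i:=\deg f_i\ge 1$ and picking $i_0$ with $m_{i_0}=\max_i m_i$, the desired inequality rearranges to
$$\sum_{i\ne i_0}(m_i-1)\;\ge\;1.$$
Each summand is nonnegative, and hypothesis (3) guarantees some index $i_1\ne i_0$ with $m_{i_1}\ge 2$, which contributes at least $1$ to the sum. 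Therefore $d_{A^H}=d_B\le \bfl_C-\bfl_A=\sum_i\deg f_i-n$, as claimed.

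The substantive work is carried by Lemma~\ref{zzlem2.1}, so the proof here is essentially a matter of bookkeeping. The only place where real care is needed is the role of hypothesis (3): without it, the maximum in Lemma~\ref{zzlem2.1}(2) can be achieved by $d_C$ rather than by $\bfl_C-\bfl_A$, as shown by the parenthetical example ($n=2$, $f_1=x_1$, $f_2=x_2^2$) immediately preceding the statement. Confirming that (3) is precisely the combinatorial input needed to force $d_C\le\bfl_C-\bfl_A$ is thus the single non-automatic step of the argument.
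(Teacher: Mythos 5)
Your proposal is correct and takes essentially the same route as the paper: both invoke Lemma~\ref{zzlem2.1} with $R=C$ and $\phi=\id$, compute $\bfl_A=n$ and $\bfl_C=\sum_{i=1}^n\deg f_i$ from the Hilbert series via \eqref{E2.1.1}, and then use hypothesis (3) to show $d_C\le\sum_{i=1}^n\deg f_i-n$ so that the second term of the maximum dominates. The only cosmetic differences are that you justify the splitting $A=B\oplus D$ directly by the Reynolds operator where the paper cites \cite[Lemma 2.4(a)]{KKZ3}, and your rearranged inequality $\sum_{i\ne i_0}(\deg f_i-1)\ge 1$ is the same elementary estimate the paper carries out after ordering the degrees.
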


\begin{proof} Since $H$ is semisimple, $A=B\oplus D$ by
\cite[Lemma 2.4(a)]{KKZ3} where $B=A^H$. Let $R=C$. Then the
hypotheses Lemma \ref{zzlem2.1}(i,ii,iii) hold. By Lemma
\ref{zzlem2.1},
$$d_B\leq \max\{ d_C, \bfl_C-\bfl_A\}.$$

It is clear that $\bfl_A=n$. By induction on $n$, one sees that
$H_C(t)=\frac{1}{\prod_{i=1}^n (1-t^{\deg f_i})}$. By
\eqref{E2.1.1}, $\bfl_C=-\deg H_{C}(t)= \sum_{i=1}^n \deg f_i$. Now
it suffices to show that $d_C\leq \sum_{i=1}^n \deg f_i -n$. For the
argument sake let us assume that $\deg f_i$ is increasing as $i$
goes up. So $d_C=\deg f_n$. Now
$$\sum_{i=1}^n \deg f_i -n=\sum_{i=1}^n (\deg f_i-1)
\geq \deg f_{n-1}-1+\deg f_n-1\geq \deg f_n.$$ The assertion
follows.
\end{proof}

This result applies to subgroups $G\subset \mathfrak{S}_n$ acting on
$\knx$.

Let $C$ be any commutative algebra over $k$ and let $n$ be a
positive integer. Define $D$ be the algebra generated by $C$ and
$\{y_1,\cdots,y_n\}$ subject to the relations
\begin{equation}
\label{E2.2.1}\tag{E2.2.1} [y_i,c]=0
\end{equation}
for all $c\in C$, and
\begin{equation}
\label{E2.2.2}\tag{E2.2.2} y_iy_j+y_jy_i=c_{ij}
\end{equation}
for $1\leq i<j\leq n$, where $\{c_{ij} \mid 1\leq i<j\leq n\}$ is a
subset of the subalgebra $C[y_1^2,\cdots,y_n^2]$ (which is in the
center of $D$).

\begin{lemma}
\label{zzlem2.3} Retain the above notation. Then
\begin{enumerate}
\item
$\sigma: \begin{cases} y_i
\mapsto -y_i & \forall \; i\\
\; c \; \mapsto c & \forall \; c\in C\end{cases}$ extends uniquely
to an algebra automorphism of $D$, and
\item
Let $\{w_1,\cdots,w_n\}$ be a subset of $C[y_1^2,\cdots,y_n^2]$.
Then $\phi:
\begin{cases} y_i
\mapsto w_i & \forall \; i\\
\; c \; \mapsto 0 & \forall \; c\in C\end{cases}$ extends uniquely
to a $\sigma$-derivation of $D$.
\end{enumerate}
\end{lemma}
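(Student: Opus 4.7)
The plan is to verify both parts by showing that the maps prescribed on generators preserve (for part (1)) or annihilate (for part (2)) the defining relations of $D$, and then to invoke the universal property of the presentation.

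For part (1), I would first record that $\sigma$ fixes $C$ pointwise and satisfies $\sigma(y_i^2)=y_i^2$, so $\sigma$ restricts to the identity on the central subalgebra $C[y_1^2,\ldots,y_n^2]$; in particular $\sigma(c_{ij})=c_{ij}$. With this, both families of defining relations transform correctly: $\sigma([y_i,c])=[-y_i,c]=-[y_i,c]=0$, and $\sigma(y_iy_j+y_jy_i-c_{ij})=y_iy_j+y_jy_i-c_{ij}$. Hence $\sigma$ extends uniquely to a graded algebra endomorphism of $D$, and it is an automorphism because $\sigma^2=\id$ on generators.

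For part (2), my plan is to extend $\phi$ to a $\sigma$-derivation of the free $C$-algebra $C\langle y_1,\ldots,y_n\rangle$ (where the $y_i$'s commute with $C$) via the twisted Leibniz rule $\phi(ab)=\phi(a)b+\sigma(a)\phi(b)$ with the prescribed values on generators, and then to check that it descends to $D$ by vanishing on the remaining quadratic relations. The crucial preparatory observation is
$$\phi(y_i^2)=\phi(y_i)y_i+\sigma(y_i)\phi(y_i)=w_iy_i-y_iw_i=0,$$
which uses the centrality of $w_i\in C[y_1^2,\ldots,y_n^2]$. Combined with $\phi|_C=0$ and $\sigma|_C=\id$, the twisted Leibniz rule then forces $\phi$ to vanish on the whole subalgebra $C[y_1^2,\ldots,y_n^2]$ by induction on monomial degree; in particular $\phi(c_{ij})=0$.

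With this in hand the remaining verifications are routine: consistency with $[y_i,c]=0$ reduces to $w_ic-cw_i=0$, while consistency with $y_iy_j+y_jy_i-c_{ij}=0$ amounts to the cancellation $(w_iy_j-y_iw_j)+(w_jy_i-y_jw_i)=0$ together with $\phi(c_{ij})=0$ from the previous step—both simply expressing the centrality of $w_i,w_j$. Uniqueness is automatic since any $\sigma$-derivation is determined by its values on algebra generators. I do not anticipate any deep obstacle; the only point worth flagging is that every sign cancellation in the argument depends on the hypothesis $\{w_i\}\cup\{c_{ij}\}\subset C[y_1^2,\ldots,y_n^2]\subset Z(D)$, so this is precisely where the central-values assumption is used.
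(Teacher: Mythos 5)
Your proposal is correct and follows essentially the same route as the paper: both arguments extend the maps by generators (giving uniqueness for free), verify that $\sigma$ preserves the relations \eqref{E2.2.1} and \eqref{E2.2.2}, and establish the key identities $\phi(y_i^2)=w_iy_i-y_iw_i=0$, hence $\phi(c_{ij})=0$, so that $\phi$ annihilates both families of relations via the centrality of the $w_i$ and $c_{ij}$. Your extra scaffolding (extending over the free $C$-algebra and the explicit induction showing $\phi$ vanishes on $C[y_1^2,\cdots,y_n^2]$) merely makes explicit what the paper leaves implicit, and your observation that $\sigma^2=\id$ gives invertibility matches the paper's part (a).
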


\begin{proof} (a) Since $D$ is generated by $C$ and $\{y_i\}_{i=1}^n$,
the extension of $\sigma$ is unique. It is clear  that the
extension of $\sigma$ preserves relations \eqref{E2.2.1} and
\eqref{E2.2.2}.

(b)  Since $D$ is generated by $C$ and $\{y_i\}_{i=1}^n$, the
extension of $\phi$, using the $\sigma$-derivation rule, is unique.
For any $c\in C$, using the fact $\phi(c)=0$, we have
$$\quad \phi([y_i,c])=\phi(y_i) c-\sigma(c) \phi(y_i)=w_i c-c w_i=0.$$
For any $i$,
$$\delta(y_i^2)=\sigma(y_i)\delta(y_i)+\delta(y_i)y_i
=-y_i\delta(y_i)+\delta(y_i)y_i=0.$$ As a consequence,
$\delta(c_{ij})=0$. Now
$$\begin{aligned}
\phi(y_iy_j+y_jy_i-c_{ij})&=\phi(y_i)y_j+\sigma(y_i)\phi(y_j)
+\phi(y_j)y_i+\sigma(y_j)\phi(y_i)\\
&=w_i y_j-y_i w_j+w_jy_i-y_j w_i=0.
\end{aligned}
$$
So the extension of $\phi$ is a $\sigma$-derivation.
\end{proof}

We need a lemma on symmetric functions of $\knx$. For every positive
integer $u$, let $P_u$ denote the $u$th power sum $\sum_{i=1}^n
x_i^{u}\in \knx$. Let $C_1$ be the subalgebra of $\knx$ generated by
$P_{2}, P_{4}, \cdots, P_{2n-2}, P_{2n}$, $C_3$ be the subalgebra of
$\knx$ generated by $P_{1}, P_{2}, P_{3}, \cdots, P_{2n-1}, P_{2n}$.
Define $P'_{i}=P_{i}$ is $i$ is odd and $P'_{i}=P_{2i}$ if $i$ is
even. Let $C_2$ be the subalgebra of $\knx$ generated by
$P'_{1},P'_{2}, \cdots, P'_{n-1}, P'_{n}$. Note that
$C_1$ contains $P_{2i}$ for all $i$.

\begin{lemma}
\label{zzlem2.4} Retain the above notation.
\begin{enumerate}
\item
$\knx$ is a finitely generated free module over the central
subalgebra $C_1$.
\item
If $u$ is even, then $P_u P_v=P_vP_u$ for any $v$.
\item
If $u$ and $v$ are odd, then $P_u P_v +P_v P_u=2P_{u+v}$.
\item
If $u$ is odd, then $P_u^2=P_{2u}$.
\item $C_1\subset C_2\subset C_3\subset
\knx^{\mathfrak{S}_n}\subset\knx^G$.
\item
$C_2$ is isomorphic to an iterated Ore extension
$$R:=k[P_{4},P_{8},\cdots, P_{4 \lfloor \frac{n}{2}\rfloor}]
[P_{1}][P_{3}; \tau_{3},\delta_3] \cdots [P_{n'};
\tau_{n'},\delta_{n'}]$$ where $n'=2\lfloor\frac{n-1}{2}\rfloor+1$.
\end{enumerate}
\end{lemma}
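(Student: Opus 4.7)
The plan is to dispatch (2)--(4) by direct computation with the skew-commutation $x_ix_j = -x_jx_i$, deduce (1) and (5) from these and classical facts, and then build the iterated Ore extension of (6) inductively via Lemma~\ref{zzlem2.3}.

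For (2), one observes $x_i^u$ is central whenever $u$ is even, since $x_i^u x_j = (-1)^u x_j x_i^u = x_j x_i^u$ for $i\ne j$. For (3), expanding $P_uP_v + P_vP_u = \sum_{i,j}(x_i^ux_j^v + x_j^vx_i^u)$, the $i=j$ terms contribute $2\sum_i x_i^{u+v}=2P_{u+v}$, while for $i\ne j$ the skew-commutation gives $x_i^ux_j^v = (-1)^{uv}x_j^vx_i^u = -x_j^vx_i^u$ (since $uv$ is odd), so the off-diagonal contribution cancels. Part (4) is the specialization $u=v$ combined with $\mathrm{char}\,k=0$. For (1), the subalgebra $Z:=k[x_1^2,\ldots,x_n^2]\subset\knx$ is central by (2), and $\knx = \bigoplus_{\epsilon\in\{0,1\}^n} Z\,x_1^{\epsilon_1}\cdots x_n^{\epsilon_n}$ is free over $Z$ of rank $2^n$; the generators $P_{2i}$ of $C_1$ are the Newton power sums $p_i$ in the commuting variables $x_j^2$, so $C_1 = Z^{\mathfrak{S}_n}$ is a polynomial ring over which $Z$ is free of rank $n!$ by Chevalley--Shephard--Todd, and composition gives $\knx$ free over $C_1$ of rank $2^n\,n!$. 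For (5), the chain $C_2\subset C_3\subset\knx^{\mathfrak{S}_n}\subset\knx^G$ is immediate, and for $C_1\subset C_2$ one splits each generator $P_{2j}$ of $C_1$ by parity of $j$: if $j$ is even then $P_{2j} = P'_j\in C_2$, and if $j$ is odd then $P_{2j} = P_j^2\in C_2$ by~(4).

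For (6), set $S_0 := k[P_4,P_8,\ldots,P_{4\lfloor n/2\rfloor}]$, which is central by (2) and algebraically independent as even-indexed power sums in the variables $x_i^2$, and put $q_s := P_{2s-1}$ for $s = 1,\ldots,N:=(n'+1)/2$. The relations coming from (2) and (3) read $[q_s,c]=0$ for $c\in S_0$ and $q_sq_t+q_tq_s = 2P_{2(s+t-1)}$, which match \eqref{E2.2.1}--\eqref{E2.2.2} of Lemma~\ref{zzlem2.3} once one verifies $c_{st}:=2P_{2(s+t-1)}\in S_0[q_1^2,\ldots,q_N^2]$: if $s+t-1=2r$ is even then $r\le N-1\le\lfloor n/2\rfloor$ so $P_{4r}\in S_0$, while if $s+t-1=m$ is odd with $m\le 2N-1\le n'$ then $P_{2m}=P_m^2=q_{(m+1)/2}^2$ by~(4). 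I then construct $R$ by induction on $s$: at stage $s$, apply Lemma~\ref{zzlem2.3}(a) to the partial algebra $R_{s-1}$ generated by $S_0,q_1,\ldots,q_{s-1}$ to obtain the sign automorphism $\tau_{2s-1}$, and apply (b) with $w_i=c_{is}$ to obtain the $\tau_{2s-1}$-derivation $\delta_{2s-1}$; then set $R_s := R_{s-1}[q_s;\tau_{2s-1},\delta_{2s-1}]$. After $N$ steps $R=R_N$, and the canonical graded surjection $\Phi\colon R\twoheadrightarrow C_2$ is an isomorphism by a Hilbert-series comparison: the Ore structure yields $H_R(t) = H_{C_1}(t)\prod_{s=1}^N(1+t^{2s-1})$, while the matching lower bound for $H_{C_2}(t)$ follows from the $C_1$-linear independence of the $2^N$ elements $q_1^{\epsilon_1}\cdots q_N^{\epsilon_N}$ ($\epsilon\in\{0,1\}^N$), which can be established inside the free $Z$-module $\knx$ of part~(1) via a Vandermonde-type argument applied to $q_s = \sum_i (x_i^2)^{s-1}x_i$.

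The main obstacle is the bookkeeping in (6): tracking at every inductive step exactly where each $c_{st}$ lands (in $S_0$ versus among the squares $q_r^2$) so that Lemma~\ref{zzlem2.3} applies at each stage, and then assembling the two matching Hilbert-series bounds needed to conclude that $\Phi$ is an isomorphism.
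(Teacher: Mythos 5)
Your parts (2)--(5) match the paper's (terse) proof: (2)--(4) are the same direct computations, and your parity splitting of $P_{2j}$ (square it when $j$ is odd, identify it with $P'_j$ when $j$ is even) is exactly the paper's argument for $C_1\subset C_2$. Parts (1) and (6) are where you genuinely diverge. For (1), the paper gets freeness homologically: it proves finiteness of $\knx$ over $C_1$ by the same two-step chain you use, but then invokes the proof of Lemma~\ref{zzlem2.1}(1), where a spectral sequence plus AS-Gorensteinness forces $\Tor^{C_1}_q(\knx,k)=0$ for $q>0$; you instead exhibit the explicit square-free basis of $\knx$ over $Z=k[x_1^2,\ldots,x_n^2]$ and compose with the freeness of $Z$ over the symmetric subalgebra. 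Your route is more concrete and even produces the rank $2^n\,n!$; note only that Shephard--Todd--Chevalley is overkill (the classical Artin basis for polynomials over symmetric functions suffices), and that identifying $C_1$ with $Z^{\mathfrak{S}_n}$ via Newton's identities silently uses $\operatorname{char}k=0$ (or $>n$) even though this section drops that hypothesis --- a restriction the paper's own wording shares. For (6), the decisive difference is how injectivity of the surjection $R\to C_2$ is obtained. The paper again reuses the proof of Lemma~\ref{zzlem2.1}(1): since $\gldim R=n=\gldim\knx$ and $\knx$ is finite over $C_2$, the spectral-sequence argument makes $\knx$ free over $R$, hence $R\to\knx$ injective, hence $R\cong C_2$ --- no combinatorics at all. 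You instead run a Hilbert-series sandwich, which is in the spirit of the paper's later Theorem~\ref{zzthm3.12} but is not what the paper does here. Your bookkeeping that each $c_{st}=2P_{2(s+t-1)}$ lands in $S_0[q_1^2,\ldots,q_{s-1}^2]$ at the relevant stage is correct and is in fact more careful than the paper's ``we can easily determine $\tau_j$ and $\delta_j$''; your computation $H_R(t)=H_{C_1}(t)\prod_{s=1}^N(1+t^{2s-1})$ also checks out.

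The one soft spot is the lower bound: the $C_1$-linear independence of the $2^N$ products $q_1^{\epsilon_1}\cdots q_N^{\epsilon_N}$ is the entire content of the isomorphism, and your ``Vandermonde-type argument'' hides a real difficulty. If you isolate the maximal $r=|\epsilon|$ with $c_\epsilon\neq0$ and read off the coefficient of a square-free monomial $x_{i_1}\cdots x_{i_r}$, you get, after dividing by the Vandermonde in $y_{i_j}=x_{i_j}^2$, an identity $\sum_{|\epsilon|=r}c_\epsilon\,s_{\lambda(\epsilon)}(y_{i_1},\ldots,y_{i_r})=0$ with $c_\epsilon$ symmetric in \emph{all} $n$ variables. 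Schur polynomials in a proper subset of the variables are \emph{not} in general independent over the full symmetric algebra (already $y_1^2-e_1y_1+e_2=0$ for $n=2$), so the naive Vandermonde conclusion fails as stated. It can be repaired: since all exponents occurring are at most $N-1$ and $N-1\le n-r$ for $r\le N=\lceil n/2\rceil$, every monomial involved lies in the Artin staircase basis of $k[y_1,\ldots,y_n]$ over the symmetric functions, and comparing leading monomials then yields $c_\epsilon=0$. You should either supply this bound explicitly or adopt the paper's homological shortcut, which makes the whole independence question evaporate.
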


\begin{proof} (1) The algebra $\knx$ is a finitely generated module over
$k[x_1^2,\cdots, x_n^2]$ and $k[x_1^2,\cdots, x_n^2]$ is finitely
generated over $C_1=k[P_{2},P_{4}, \cdots, P_{2n}]$ where each
$P_{2i}$ is the $i$th power sum of the variables $\{x_1^2,
\cdots,x_n^2\}$. Therefore $\knx$ is finitely generated over $C_1$.
By the proof of Lemma \ref{zzlem2.1}(1), $\knx$ is free over $C_1$.

(2,3,4) By direct computations.

(5) If $i$ is odd, $(P'_i)^2=(P_i)^2=P_{2i}$, and if $i$ is even,
$P'_i=P_{2i}$. So $C_1\subset C_2$. The rest is clear.

(6) For odd integers $i<j$, part (3) says that
$$P_{j} P_{i}+P_{i}P_{j} =2 P_{i+j}.$$
We can easily determine the automorphisms $\tau_j$ and derivations
$\delta_j$ by using Lemma \ref{zzlem2.3}. As a consequence, there is
a surjective map $\phi: R\to C_2$. Also $\gldim R=n=\gldim \knx$. By
the proof of Lemma \ref{zzlem2.1}(1), $C\cong R$.
\end{proof}

\begin{theorem}[Broer's Bound for $\knx^G$]
\label{zzthm2.5} Let $G$ be a subgroup of $\mathfrak{S}_n$ acting on
$\knx$ naturally. Suppose $|G|$ does not divides ${\text{char}}\;
k$. Then $$d_{(\knx^G)}\leq \frac{1}{2}n(n-1)+\lfloor
\frac{n}{2}\rfloor (\lfloor \frac{n}{2}\rfloor+1)\sim \frac{3}{4}
n^2.$$
\end{theorem}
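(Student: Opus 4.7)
The idea is to invoke Broer's Bound (Lemma \ref{zzlem2.2}) with $A=\knx$, $H=kG$, $B=A^G$, and $C=C_2$ as constructed in Lemma \ref{zzlem2.4}. Since $|G|$ does not divide $\operatorname{char} k$, the group algebra $kG$ is a semisimple Hopf algebra acting on $A$, verifying hypothesis~(1) of Lemma \ref{zzlem2.2}. Hypothesis~(2) comes directly from Lemma \ref{zzlem2.4}: part~(5) yields the chain $C_2\subset \knx^{\mathfrak{S}_n}\subset \knx^G$, while part~(1) together with the inclusion $C_1\subset C_2$ (also from part~(5)) shows that $A$ is finitely generated as a right $C_2$-module. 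Lemma \ref{zzlem2.4}(6) presents $C_2$ as an iterated Ore extension whose generating sequence consists of the $P'_i$'s (grouped so that the commuting even-index generators appear first); for $n\geq 3$ at least two of these generators have degree $>1$, for example $P_3$ and $P_4$, so hypothesis~(3) is met.

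With the hypotheses in place, Lemma \ref{zzlem2.2} gives
\[
d_{\knx^G} \;\leq\; \bfl_{C_2}-\bfl_{\knx} \;=\; \sum_{i=1}^n \deg P'_i \;-\; n,
\]
where $\bfl_{\knx}=n$ because $\knx$ is a quantum polynomial ring of dimension $n$. By the definition of $P'_i$ the sum splits by parity as
\[
\sum_{i=1}^n \deg P'_i \;=\; \sum_{\substack{1\leq i\leq n\\ i\text{ odd}}} i \;+\; \sum_{\substack{1\leq i\leq n\\ i\text{ even}}} 2i.
\]
A routine case analysis (writing $n=2m$ or $n=2m+1$ and summing the two arithmetic progressions) evaluates this quantity to $\tfrac{1}{2}n(n-1)+\lfloor n/2\rfloor(\lfloor n/2\rfloor+1)+n$; subtracting the trailing $n$ produces the bound stated in the theorem.

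The only real obstacle is the edge case $n=2$, where among the two generators $P_1$ and $P_4$ of $C_2$ only $P_4$ has degree $>1$, so hypothesis~(3) of Lemma \ref{zzlem2.2} fails. This case is dispatched directly: for the trivial subgroup $G=\{1\}$ one has $d_A=1\leq 3$, and for $G=\mathfrak{S}_2$ the fixed subring is generated in degrees at most $3$ (see Example~\ref{zzex3.1}), which matches the bound $\tfrac{1}{2}(2)(1)+1\cdot 2 = 3$. No further delicate argument is needed, since the nontrivial work has been absorbed into the earlier structural lemmas.
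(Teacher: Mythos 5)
Your proposal is correct and follows essentially the same route as the paper's own proof: both apply Lemma \ref{zzlem2.2} with $A=\knx$, $H=kG$ semisimple, and $C=C_2$ from Lemma \ref{zzlem2.4}, then evaluate $\sum_{i=1}^n \deg P'_i - n = \frac{1}{2}n(n-1)+\lfloor\frac{n}{2}\rfloor(\lfloor\frac{n}{2}\rfloor+1)$. The paper simply states that the cases $n=1,2$ ``can be checked directly'' while you verify $n=2$ explicitly and spell out the hypotheses of Lemma \ref{zzlem2.2} in more detail, which is a harmless elaboration rather than a different argument.
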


\begin{proof} The assertion can be checked directly for $n=1,2$.
Assume now that $n\geq 3$. Let $A:=\knx$ and $C$ be $C_2$ as in
Lemma \ref{zzlem2.4}(6). Then $C$ is a subalgebra of $A^G$ for any
$G\subset \mathfrak{S}_n$. Since $|G|$ does not divides
${\text{char}}\; k$, $H:=kG$ is semisimple. Note that $\deg P_i=i$.
Hence all hypotheses in Lemma \ref{zzlem2.2} are satisfied. By Lemma
\ref{zzlem2.2},
$$d_{A^G}\leq \sum_{i=1} \deg f_i-n =\frac{1}{2}n(n+1)+\lfloor
\frac{n}{2}\rfloor (\lfloor \frac{n}{2}\rfloor+1)
-n=\frac{1}{2}n(n-1)+\lfloor \frac{n}{2}\rfloor (\lfloor
\frac{n}{2}\rfloor+1).$$
\end{proof}

This bound is sharp when $n=2$ [Example \ref{zzex3.1}]. For larger
$n$, we have no examples to show this bound is sharp -- and it
probably is not sharp.

Next we consider a generalization of the G\"{o}bel bound \cite{Go}.
If $G$ is a group of permutation of $\{x_i\}_{i=1}^n$  acting as
automorphisms on $\kpx$ then G\"{o}bel's Theorem states that
$\kpx^G$ is generated by the $n$ symmetric polynomials (or the power
sums) and ``special polynomials".  Let $\mathcal{O}_G(X^I)$
represent the orbit sum of $X^I$ under $G$. ``Special polynomials"
are all $G$-invariants of the form $\mathcal{O}_G(X^I)$,
 where $\lambda(I) = (\lambda_i)$, the
partition associated to $I$ (i.e. arranging the elements of $I$ in
weakly decreasing order), has the properties that the last part of
the partition $\lambda_n = 0$, and $\lambda_{i}-\lambda_{i+1} \leq
1$ for all $i$.  It follows that an upper bound on the degree of a
minimal set of generators of $\kpx^G$ for any $n$-dimensional
permutation representation of $G$ is $\max \{ n, \dbinom{n} {2}\}$.
In this context the G\"{o}bel bound can be a sharp bound, as it is
when the alternating group $\mathfrak{A}_n$ acts on $\kpx$. A
similar idea works for $\knx$, see \cite[Corollary 3.2.4]{CA}. But
we consider a modification of $\mathfrak{S}_n$.

Let $\widehat{\mathfrak{S}_n}$ be the group $\mathfrak{S}_n \rtimes
\{\pm 1\}^{n}$, where $\{\pm 1\}^{n}$ is the subgroup of diagonal
actions $x_i\to a_i x_i$ for all $i$, where $a_i=\pm 1$.

\begin{theorem}[G\"{o}bel's Bound for $\knx^G$]
\label{zzthm2.6} Let $G$ be a subgroup of
$\widehat{\mathfrak{S}_n}$. Then
$$d_{\knx^G}\leq n^2, \quad {\text{and}}\quad
d_{\kpx^G}\leq n^2.$$
\end{theorem}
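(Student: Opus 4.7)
The plan is to extend G\"{o}bel's orbit-sum reduction from $\mathfrak{S}_n$ acting on $\kpx$ to $\widehat{\mathfrak{S}_n}=\mathfrak{S}_n\rtimes\{\pm 1\}^n$ acting on either $A=\kpx$ or $A=\knx$, with the classical symmetric polynomials $e_1,\ldots,e_n$ replaced by their ``squared'' analogues $f_k:=e_k(x_1^2,\ldots,x_n^2)$ for $k=1,\ldots,n$. Because $x_ix_j=-x_jx_i$ forces each $x_i^2$ to be central in $\knx$, every $f_k$ is central in $A$; the $f_k$ are moreover algebraically independent, invariant under all of $\widehat{\mathfrak{S}_n}$, and satisfy $\deg f_k=2k$. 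I would call a partition $\lambda=(\lambda_1\geq\cdots\geq\lambda_n\geq 0)$ \emph{special} if $\lambda_j-\lambda_{j+1}\leq 2$ for every $j$ (with $\lambda_{n+1}=0$) and $\lambda_n\leq 1$; the maximum-degree special partition is $(2n-1,2n-3,\ldots,3,1)$, with degree $\sum_{k=1}^n(2k-1)=n^2$. Let $\mathcal{O}_G(x^I):=\sum_{g\in G}g\cdot x^I$ denote the $G$-orbit sum of a monomial (signed when $A=\knx$), and note that $A^G$ is spanned by such orbit sums, so it is enough to reduce every orbit sum with $\lambda(I)$ non-special to a polynomial in the $f_k$ together with orbit sums of smaller complexity.

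The reduction breaks into two cases. First, if $\lambda(I)_n\geq 2$, then $x^I=x^{I-(2,\ldots,2)}\cdot f_n$, and centrality plus $G$-invariance of $f_n$ give $\mathcal{O}_G(x^I)=f_n\cdot\mathcal{O}_G(x^{I-(2,\ldots,2)})$, dropping the total degree by $2n$. Second, if $\lambda(I)_n\leq 1$ but $\lambda_i-\lambda_{i+1}\geq 3$ for some $i$, I would let $\mu$ be the tuple obtained from $I$ by subtracting $2$ from the entries at the positions of the top $i$ parts of $\lambda(I)$, and use centrality of $f_i$ to expand
\[
\mathcal{O}_G(x^\mu)\cdot f_i \;=\; \sum_{|T|=i}\mathcal{O}_G\bigl(x^{\mu+2\chi_T}\bigr).
\]
The strict inequality $\mu_i=\lambda_i-2\geq\lambda_{i+1}+1>\mu_{i+1}$ forces $T=\{1,\ldots,i\}$ to be the unique subset whose sorted tuple $\mu+2\chi_T$ equals $\lambda$; every other $T$ yields a partition of the same total degree but strictly smaller in dominance order (hence strictly lex-smaller after sorting). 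Consequently $\mathcal{O}_G(x^I)$ appears with coefficient exactly $1$, and a double induction on $|\lambda|$ together with the partition order inside each fixed degree drives every orbit sum down to a polynomial in the $f_k$ and the orbit sums of special partitions.

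Putting everything together, $A^G$ is generated as a $k$-algebra by the $f_k$ (of degrees at most $2n$) and by orbit sums indexed by special partitions (of degrees at most $n^2$); since $2n\leq n^2$ for $n\geq 2$ and the case $n=1$ is trivial, this gives $d_{A^G}\leq n^2$. The principal obstacle is the coefficient-one verification in the second case: a larger coefficient would force division by a non-unit integer and destroy the characteristic-free conclusion, so the uniqueness of $T=\{1,\ldots,i\}$ as the subset producing $\lambda$ must be argued carefully using the strict gap $\lambda_i-\lambda_{i+1}\geq 3$. In the noncommutative case $A=\knx$, the sign twists that arise when elements of $\widehat{\mathfrak{S}_n}$ are rewritten in the normal form $x^J$ are absorbed silently into the signed orbit-sum definition, and multiplication by the central $f_i$ introduces no new signs, so the displayed reduction formula is identical in the commutative and noncommutative settings.
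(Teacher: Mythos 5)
Your proposal is correct in substance, but it takes a genuinely different route from the paper's. The paper disposes of this theorem in three lines: it takes $C=k[P_2,P_4,\ldots,P_{2n}]$, the central polynomial subalgebra of even power sums, notes that $A$ is a finitely generated free $C$-module with $C\subset A^G$, and invokes its noncommutative Broer bound (Lemma \ref{zzlem2.2}, which rests on the homological Lemma \ref{zzlem2.1}: freeness over an AS regular subalgebra via a spectral sequence, together with $\deg H_C(t)=-\bfl_C$), yielding $d_{A^G}\le \sum_i 2i-n=n^2$. You instead re-run G\"{o}bel's original orbit-sum reduction with $f_k=e_k(x_1^2,\ldots,x_n^2)$ replacing $e_k$ and ``gap at most $2$'' special partitions -- essentially the route of \cite[Corollary 3.2.4]{CA} that the paper deliberately bypasses. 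What each buys: the paper's argument is uniform with Theorem \ref{zzthm2.5} but needs $kG$ semisimple (hypothesis (1) of Lemma \ref{zzlem2.2}, a characteristic restriction left implicit in the theorem statement) and gives no generating set; yours is essentially characteristic-free, as classical G\"{o}bel is, provided you define orbit sums as sums over orbits rather than over all of $G$ (your full-group sums span $A^G$ only after dividing by stabilizer orders), and it produces explicit generators: the $f_k$ plus the special orbit sums. Two points to tidy: close the uniqueness-of-$T$ step by counting -- since the entries of $\mu+2\chi_T$ exceeding the gap value $\lambda_{i+1}$ occupy exactly the top $i$ positions for every $T$, comparing their sum with $\sum_{j\le i}(\lambda_j-2)$ forces $T$ to be the top block, and the same partial-sum comparison gives $\mathrm{sorted}(\mu+2\chi_T)\trianglelefteq\lambda$ for all $T$; and your remark that ``the case $n=1$ is trivial'' is false -- for $G=\{\pm 1\}\subset\widehat{\mathfrak{S}_1}$ the fixed ring is $k[x^2]$, requiring a degree-$2$ generator while $n^2=1$ -- so the statement implicitly assumes $n\ge 2$, as does the paper's proof, since Lemma \ref{zzlem2.2}(3) demands two $f_i$ of degree greater than $1$.
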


\begin{proof} Let $A$ be $\knx$ or $\kpx$. Let $C=k[P_{2}, P_{4},
\cdots, P_{2n}]$. Then $A$ is a finitely generated free module over
$C$ such that $C\subset A^G$. By Lemma \ref{zzlem2.2},
$$d_{A^G}\leq \sum_{i} \deg f_i-n=\sum_{i} 2i -n=n(n+1)-n=n^2.$$
\end{proof}

\cite[Corollary 3.2.4]{CA} is a consequence of the above theorems.

\section{Invariants under the full symmetric group $\mathfrak{S}_n$}
\label{zzsec3}

Some results in this and the next section have been proved in
\cite{CA}. We repeat some of the arguments for completeness.

We consider the ring of invariants
$\knx^{\mathfrak{S}_n}$ under the full symmetric group
$\mathfrak{S}_n$. Gauss proved that $\kpx^{\mathfrak{S}_n}$ is
generated by the $n$ elementary symmetric functions $\sigma_k$ for
$1 \leq k \leq n$, each of which is an orbit sum (sum of all the
elements in the $\mathfrak{S}_n$-orbit) of the given monomials.
Recall that, for each $1\leq k\leq n$,
$$\sigma_k (x_1, \ldots, x_n) =
\sum_{i_1 < i_2< \cdots < i_k} x_{i_1}x_{i_2} \cdots x_{i_k}.$$
These $\sigma_k$ are algebraically independent, and hence form a
commutative polynomial ring $k[\sigma_1, \ldots, \sigma_n]$. As
a consequence, $cci(\kpx^{\mathfrak{S}_n})=0$. Another
basis of algebraically independent generators of
$\kpx^{\mathfrak{S}_n}$ is the set of the $n$ power sums
$$P_k  = \sum_{i=1}^nx_i^k$$
for $1 \leq k \leq n$. Hence $n$ is the maximal degree of a set of
minimal generators for the fixed subring $\kpx^{\mathfrak{S}_n}$.

The noncommutative case is different. As we have used in the last
section, $P_k$ can be defined in the algebra $\knx$ in the same way,
which is also an $\mathfrak{S}_n$-invariant. However, considered  as
an element in $\knx$, $\sigma_k$ is not an
$\mathfrak{S}_n$-invariant.

\begin{example}
\label{zzex3.1} Let $A=k_{-1}[x_1,x_2]$ and let $G= \langle g
\rangle = \mathfrak{S}_2$ for $g = (1,2)$. Now the element
$\sigma_2=x_1x_2$ is not invariant, and, moreover, $P_2$ is not a
generator because $P_2 = P_1^2$; it is easy to check that there are
no other invariants of degree 2.  We will show that the invariants
are generated by $P_1=x_1 + x_2$ and $P_3=x_1^3+x_2^3$, or by $S_1=
P_1=x_1+x_2$ and $S_2 = x_1^2x_2 + x_1x_2^2$.  In this example the
maximal degree of a minimal set of generators is $3$ [Theorem
\ref{zzthm2.5}], which is larger than the order of the group $|G|$
(the ``Noether bound" \cite{No} guarantees the maximal degree of
a minimal set of generators of $\kpx^G$ is $\leq |G|$). In the
case of either set of generators, the generators are not
algebraically independent, and the ring of invariants is not
AS regular, but AS Gorenstein [Theorem \ref{xxthm1.5}]; and we
will show that it is a cci in a couple ways. First, it is a
hypersurface in the AS regular algebra $B$ generated
by $x,y$ with relations $xy^2=y^2x$ and $x^2y=yx^2$:
$$A^{\mathfrak{S}_2} \cong \frac{B}{(2x^6-3 x^3 y - 3 y x^3 + 4 y^2)}$$
(where $P_1 \mapsto x$ and $P_3 \mapsto y$).  Second, it is a factor
of the iterated Ore extension $C= k[a,b][x][y; \tau, \delta]$, where
$\tau$ is the automorphism of $k[a,b,x]$ defined by $\tau(a)=a,
\tau(b)=b, \tau(x) = -x$, and $\delta$ is a $\tau$-derivation of
$k[a,b,x]$ defined by $\delta(a)= \delta(b) = 0$ and $\delta(x) =
2b$:
$$A^{\mathfrak{S}_2} \cong  \frac{C}{( x^2 -a, y^2 - c )}.$$
Here $\{x^2-a, y^2 -c \}$ for $c= (3ab-a^3)/2$ is a regular sequence
of central elements of $C$.  In this isomorphism
 $P_2 \mapsto a, P_4 \mapsto b, P_1 \mapsto x, P_3 \mapsto y, $
since we have the relations [Lemma \ref{zzlem2.4}]
$$P_3P_1 + P_1 P_3 = 2 P_4$$
$$P_1^2 = P_2$$
$$P_3^2 = P_6 = P_2P_4 - P_2(P_2^2-P_4)/2.$$
\end{example}

The aim of this section is to prove the analogous result for
arbitrary $n$.  We first repeat the analysis from \cite{CA} and
show that there are two sets of algebra generators
of $\knx^{\mathfrak{S}_n}$: the $n$ odd power sums $P_1, P_3,
\cdots, P_{2n-1}$ and the $n$ elements for $1 \leq k \leq n$:
$$S_k = \sum x_{i_1}^2x_{i_2}^2 \cdots x_{i_{k-1}}^2 x_{i_k}=:
\mathcal{O}_{\mathfrak{S}_n}(x_{1}^2 x_{2}^2 \cdots x_{k-1}^2x_k)$$
where the sum is taken over all distinct $i_1, \ldots, i_k$ with
$i_1 < i_2 < \dots < i_{k-1}$, and $\mathcal{O}_{\mathfrak{S}_n}$
represents the sum of the orbit under the full symmetric group; we
call these elements $S_k$ the ``super-symmetric polynomials"
since they play the role that the symmetric functions play in the
commutative case. Hence the maximal degree of a set of minimal
generators for the full ring of invariants $\knx^{\mathfrak{S}_n}$
is $2n-1$.

Any monomial in $\knx$ can be written as the form $\pm
x_1^{i_1}x_2^{i_2}\cdots x_n^{i_n}$, where the sign is due to the
fact that these $x_i$s are $(-1)$-commutative. Let $I$ denote the
index $(i_k):=(i_1,\cdots,i_n)$ and let $X^I$ denote the monomial
$x_1^{i_1}x_2^{i_2}\cdots x_n^{i_n}$. Throughout let $G$ be a
subgroup of $\mathfrak{S}_n$ unless otherwise stated. Define
$$stab_G(X^I)=\{g\in G \mid g(X^I)=X^I {\text{ in }}
\knx\}.$$ For any permutation $\sigma\in G$, $stab_G(X^I)$ and
$stab_G(x_{\sigma(1)}^{i_1}x_{\sigma(2)}^{i_2}\cdots
x_{\sigma(n)}^{i_n})$ are conjugate to each other. As a consequence,
$|stab_G(X^I)|=|stab_G(x_{\sigma(1)}^{i_1}x_{\sigma(2)}^{i_2}\cdots
x_{\sigma(n)}^{i_n})|$.

\begin{definition}
\label{zzdef3.2} Let $\lambda(m) = (\lambda_1, \lambda_2, \cdots,
\lambda_n)$ be a partition $m$, where $\lambda_i$ are weakly
decreasing and  $\lambda_i \geq 0$. Let $X^\lambda$ be the monomial
$x_1^{\lambda_1} x_2^{\lambda_2} \cdots x_n^{\lambda_n}$. The
$G$-orbit sum of the monomial $X^\lambda$ of (total) degree $m$ is
defined by
$$\mathcal{O}_{G}(X^\lambda)=
\mathcal{O}_{G}(x_1^{\lambda_1} x_2^{\lambda_2} \cdots
x_n^{\lambda_n}) = \frac{1}{|stab_G(X^\lambda)|}\sum_{g \in
G}x_{g(1)}^{\lambda_1} x_{g(2)}^{\lambda_2} \cdots
x_{g(n)}^{\lambda_n}.$$
\end{definition}

In this section we take $G=\mathfrak{S}_n$ and in the next
$G=\mathfrak{A}_n$.

\begin{remark}
\label{zzrem3.3} We divide by the order of the stabilizer of
$X^\lambda$ so that each element of the orbit is counted only once.
Throughout we will compare monomials using the
length-lexicographical order: for $I = (i_k)$ and $J= (j_k)$ we say
$X^I < X^J$ if $\sum i_k < \sum j_k$, or if $\sum i_k = \sum j_k$,
and if $k$ is the smallest index for which $i_k \neq j_k$ then $i_k
< j_k$; when considering elements of the same degree this order is
the lexicographical order on the exponents with $x_1 > x_2 > \ldots
> x_n$. Hence we will denote the $\mathfrak{S}_n$-orbit sum by
$\mathcal{O}_{\mathfrak{S}_n}(X^I)$, where $X^I$ is the leading term
of the orbit sum under the (length)-lexicographic order and so $I$
is a partition, and we call $\mathcal{O}_{\mathfrak{S}_n}(X^I)$ the
$\mathfrak{S}_n$-orbit sum corresponding to the partition $I$. We
refer to the entries in $I$ as the ``parts" of the partition (so a
part may be $0$).
\end{remark}

The following lemma is easily verified.

\begin{lemma}\cite[Theorem 2.1.3]{CA}
\label{zzlem3.4}
Let $G$ be a finite subgroup of $\mathfrak{S}_n$.
Then any $G$-invariant is a sum of homogeneous $G$-invariants and
homogeneous invariants are linear combinations of $G$-orbit sums.
\end{lemma}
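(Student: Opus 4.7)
The plan is to verify the two assertions directly, leaning on the fact that $G\subseteq \mathfrak{S}_n$ acts on $A:=\knx$ by graded algebra automorphisms, since permutations of $\{x_i\}$ preserve both the defining relations $x_ix_j+x_jx_i=0$ and the standard $\mathbb{N}$-grading of $A$.

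For the first claim, I would decompose an arbitrary $G$-invariant $f$ into its homogeneous components $f=\sum_{d}f_d$ with $f_d\in A_d$. Since each $g\in G$ is graded, $g(f)=\sum_{d} g(f_d)$ with $g(f_d)\in A_d$, so comparing homogeneous components of $g(f)=f$ yields $g(f_d)=f_d$ for every $d$. Hence each $f_d$ is itself a $G$-invariant, and $f$ is the sum of the homogeneous invariants $f_d$.

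For the second claim, I would work under the standing characteristic-zero hypothesis (so that $|G|$ is a unit in $k$) and introduce the Reynolds operator
\[
R\colon A\longrightarrow A^G,\qquad R(a)=\frac{1}{|G|}\sum_{g\in G}g(a),
\]
a $k$-linear projection that fixes $A^G$ pointwise. Expanding a homogeneous invariant $f$ of degree $d$ in the standard PBW basis as $f=\sum_{|I|=d} c_I X^I$, invariance gives $f=R(f)=\sum_I c_I\,R(X^I)$. It then suffices to identify each $R(X^I)$ with a $k$-scalar multiple of an orbit sum.

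To do so, I would partition $G$ into left cosets of $H_I:=stab_G(X^I)$. Elements of a common coset agree on $X^I$, since by definition $h(X^I)=X^I$ in $A$ for $h\in H_I$; therefore $\sum_{g\in G} g(X^I)=|H_I|\sum_{[g]\in G/H_I} g(X^I)$, and comparing with Definition \ref{zzdef3.2} yields
\[
R(X^I)=\tfrac{|H_I|}{|G|}\,\mathcal{O}_G(X^I).
\]
Substituting back expresses $f=\sum_I c_I\tfrac{|H_I|}{|G|}\mathcal{O}_G(X^I)$ as a $k$-linear combination of orbit sums (with the convention that any vanishing $\mathcal{O}_G(X^I)$ simply drops out). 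The one delicate point is the sign bookkeeping that appears when a permutation reorders $g(X^I)$ back into standard PBW form; this is the main thing to get right, but it is handled uniformly by defining $stab_G(X^I)$ using equality inside $A$, which already records the sign, rather than inside the underlying set of monomial symbols.
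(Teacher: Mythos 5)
Your proof is correct and is essentially the argument the paper intends: the paper prints no proof at all (it cites \cite{CA} and introduces the lemma as ``easily verified''), and the standard verification is exactly your two steps --- comparing homogeneous components under the graded $G$-action, then applying the Reynolds operator in characteristic zero and identifying $R(X^I)=\frac{|stab_G(X^I)|}{|G|}\,\mathcal{O}_G(X^I)$, which is immediate from Definition \ref{zzdef3.2} since $g(X^\lambda)=x_{g(1)}^{\lambda_1}\cdots x_{g(n)}^{\lambda_n}$. Your one flagged subtlety is also handled correctly: defining $stab_G(X^I)$ by equality in $\knx$ (sign included) makes the rescaling identity hold verbatim, and orbit sums that cancel to zero because some $g$ sends $X^I$ to $-X^I$ simply drop out of the linear combination, consistent with Lemma \ref{zzlem3.5}.
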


\begin{lemma}\cite[Lemma 2.2.2]{CA}
\label{zzlem3.5} A $\mathfrak{S}_n$-orbit sum corresponding to a
partition $\lambda(m) = (\lambda_1, \lambda_2, \cdots, \lambda_n)$
is zero if and only if it has repeated odd parts.  Hence a non-zero
$\mathfrak{S}_n$-orbit sum corresponds to a partition with no
repeated odd parts.
\end{lemma}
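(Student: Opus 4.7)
The plan is to track the signs produced by the $(-1)$-commutation relations \eqref{E0.0.1} when rewriting $g(X^\lambda)$ in the standard ordering $x_1^{b_1}\cdots x_n^{b_n}$. The basic move---transposing $x_i^a$ past $x_j^b$ with $i \neq j$---costs $(-1)^{ab}$, and this is the only source of signs.

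For the forward (``only if'') direction, assume $\lambda$ has two equal odd parts $\lambda_p=\lambda_q$ with $p<q$, and set $\tau=(p,q)$. I would compute directly that $\tau(X^\lambda)=-X^\lambda$ in $\knx$: writing
$$\tau(X^\lambda)=x_1^{\lambda_1}\cdots x_{p-1}^{\lambda_{p-1}}\,x_q^{\lambda_p}\,x_{p+1}^{\lambda_{p+1}}\cdots x_{q-1}^{\lambda_{q-1}}\,x_p^{\lambda_q}\,x_{q+1}^{\lambda_{q+1}}\cdots x_n^{\lambda_n}$$
and sliding $x_q^{\lambda_p}$ rightward past $x_{p+1}^{\lambda_{p+1}},\ldots,x_{q-1}^{\lambda_{q-1}},x_p^{\lambda_q}$ and then $x_p^{\lambda_q}$ leftward past $x_{q-1}^{\lambda_{q-1}},\ldots,x_{p+1}^{\lambda_{p+1}}$ yields a total sign of $(-1)^{(\lambda_p+\lambda_q)(\lambda_{p+1}+\cdots+\lambda_{q-1})+\lambda_p\lambda_q}$. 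Since $\lambda_p,\lambda_q$ are odd, $\lambda_p+\lambda_q$ is even while $\lambda_p\lambda_q$ is odd, so the sign is $-1$. Because the resulting monomial has the original exponents (as $\lambda_p=\lambda_q$), this gives $\tau(X^\lambda)=-X^\lambda$. Consequently $(g\tau)(X^\lambda)=-g(X^\lambda)$ for every $g\in\mathfrak{S}_n$, and pairing $g\leftrightarrow g\tau$ in the sum of Definition \ref{zzdef3.2} makes $\mathcal{O}_{\mathfrak{S}_n}(X^\lambda)$ vanish.

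For the reverse (``if'') direction, assume $\lambda$ has no repeated odd parts. Let $St\subset\mathfrak{S}_n$ denote the ``commutative'' stabilizer of $X^\lambda$, namely the direct product of the symmetric groups on the blocks of indices carrying equal values of $\lambda_i$. The hypothesis forces every odd-part block to be a singleton, so each nontrivial factor of $St$ permutes positions carrying the same \emph{even} part; transposing two such positions contributes sign $(-1)^{(\text{even})^2}=+1$ when rewriting $s(X^\lambda)$ in standard form, so $s(X^\lambda)=X^\lambda$ in $\knx$ for every $s\in St$. Thus $stab_{\mathfrak{S}_n}(X^\lambda)=St$, and breaking the sum in Definition \ref{zzdef3.2} into left cosets of $St$ gives $\mathcal{O}_{\mathfrak{S}_n}(X^\lambda)=\sum_{gSt}\, g(X^\lambda)$, a sum of $[\mathfrak{S}_n:St]$ monomials that are pairwise distinct (the underlying exponent vectors $(\lambda_{g^{-1}(i)})$ differ as $g$ runs through coset representatives) and each carries coefficient $\pm 1$. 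No cancellations are possible, so the orbit sum is nonzero.

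The only delicate step is the sign calculation $\tau(X^\lambda)=-X^\lambda$; the rest is bookkeeping about cosets of $St$. Once that sign is pinned down, the pairing argument for vanishing and the linear-independence argument for nonvanishing are both routine.
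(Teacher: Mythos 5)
Your proof is correct and takes essentially the same route as the paper's: the heart of both arguments is that a transposition of two positions carrying the same odd exponent acts on the monomial as $-1$ (you obtain this via the closed-form sign count $(-1)^{(\lambda_p+\lambda_q)(\lambda_{p+1}+\cdots+\lambda_{q-1})+\lambda_p\lambda_q}$, where the paper instead inducts on the distance $k-j$ between the swapped indices), followed by the same pairing argument $g \leftrightarrow g\tau$ for vanishing. Your converse direction, decomposing the orbit sum into cosets of the stabilizer and noting that distinct cosets produce distinct basis monomials with coefficients $\pm 1$, is a more explicit write-up of the paper's one-line observation that permuting equal even exponents causes no sign change, but it rests on the same underlying idea.
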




\begin{proof}
An orbit sum $\mathcal{O}_{\mathfrak{S}_n}(X^I)$ is zero if and only
if the $\mathfrak{S}_n$-orbit of $X^I$ consists of monomials and
their negatives, i.e. $\sigma X^I = -X^I$ for some $\sigma \in
\mathfrak{S}_n$. In order for $\sigma X^I = -X^I$ there must be a
repeated exponent. Consider a monomial of the form $x_1^{e_1}\cdots
x_j^{e_j} \cdots x_k^{e_k} \cdots x_n^{e_n}$ where $e_j=e_k$ and
both are odd.  We claim that when the transposition $(j,k)$ is
applied to this monomial we get the same monomial but with a
negative sign.  We induct on $k-j$.  If $k-j=1$ then the result is
clear. Hence assume that result is true for $k-j < \ell$ and we
prove it for $k-j = \ell$. We write the monomial as $x_1^{e_1}\cdots
x_j^{e_j} \cdots x_{k-1}^{e_{k-1}} x_k^{e_k} \cdots x_n^{e_n}$ and
consider the case when $e_{k-1}$ is odd and the case when $e_{k-1}$
is even.  When $e_{k-1}$ is odd then $(j,k)$ applied to the monomial
yields
$$\begin{aligned}
x_1^{e_1}&\cdots x_k^{e_k} \cdots x_{k-1}^{e_{k-1}} x_j^{e_j} \cdots
x_n^{e_n}\\
&= -x_1^{e_1}\cdots x_k^{e_k} \cdots  x_j^{e_j}
x_{k-1}^{e_{k-1}}\cdots x_n^{e_n} \end{aligned}$$ which by induction
is
$$\begin{aligned}
\quad\quad &= x_1^{e_1}\cdots x_j^{e_j} \cdots  x_k^{e_k}
x_{k-1}^{e_{k-1}}\cdots x_n^{e_n}\\
&= -x_1^{e_1}\cdots x_j^{e_j} \cdots x_{k-1}^{e_{k-1}} x_k^{e_k}
\cdots x_n^{e_n}. \end{aligned}$$ When $e_{k-1}$ is even then
$(j,k)$ applied to the monomial yields
$$\begin{aligned}
\; x_1^{e_1}& \cdots x_k^{e_k} \cdots x_{k-1}^{e_{k-1}} x_j^{e_j}
\cdots x_n^{e_n}\\
&= x_1^{e_1}\cdots x_k^{e_k} \cdots  x_j^{e_j}
x_{k-1}^{e_{k-1}}\cdots x_n^{e_n}\quad \end{aligned}$$ which by
induction is
$$\begin{aligned}
\quad \quad &= -x_1^{e_1}\cdots x_j^{e_j} \cdots  x_k^{e_k}
x_{k-1}^{e_{k-1}}\cdots x_n^{e_n}\\
&= -x_1^{e_1}\cdots x_j^{e_j} \cdots x_{k-1}^{e_{k-1}} x_k^{e_k}
\cdots x_n^{e_n}.\end{aligned}$$
Hence $\sigma X^I = - X^I$, and so
for any $\tau X^I$ in the $\mathfrak{S}_n$-orbit of $X^I$ we have $-
\tau X^I = \tau \sigma X^I$ is in the orbit of $X^I$, and hence the
$\mathfrak{S}_n$-orbit sum of $X^I$ is zero.

Clearly when indices with even exponents of the same value are
permuted no sign change occurs, and so the orbit sum will not be
zero unless there is at least one repeated odd exponent.
\end{proof}

By Lemma \ref{zzlem3.5} the set of elements in $\knx^{\mathfrak{S}_n}$
of degree $k$ has a vector space basis corresponding to the partitions
of $k$ into at most $n$ parts with no repeated odd entries.  We next
will show that both the sets $S_k$  and $P_{2k-1}$ for $k = 1, \ldots, n$
(corresponding to the partitions $(2,\ldots, 2,1, 0, \ldots, 0)$ and
$(2k-1,0,\ldots,0)$ of $2k-1$, respectively) are algebra generators
of $\knx^{\mathfrak{S}_n}$.

\begin{lemma}
\label{zzlem3.6} Let $I = (\lambda_k)$ be a partition where no
$\lambda_i$ are both equal and odd. The leading term of
$\mathcal{O}_{\mathfrak{S}_n}(x_1^{\lambda_1} x_2^{\lambda_2} \cdots
x_n^{\lambda_n}) S_k$ is $x_1^{\lambda_{1} +2} \cdots
x_{k-1}^{\lambda_{k-1}+2} x_{k}^{\lambda_{k}+1}
x_{k+1}^{\lambda_{k+1}} \cdots x_n^{\lambda_n}$.
\end{lemma}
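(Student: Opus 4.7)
The plan is to pin down the leading term of $\mathcal{O}_{\mathfrak{S}_n}(X^I)\cdot S_k$ by multiplying the leading monomials of the two factors, then showing both that no lex-larger monomial appears in the product and that no cancellation annihilates the resulting leading term.

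First, I identify the leading monomials. Since $I=(\lambda_1,\ldots,\lambda_n)$ is a partition, the length-lex largest monomial of $\mathcal{O}_{\mathfrak{S}_n}(X^I)$ is $X^I=x_1^{\lambda_1}\cdots x_n^{\lambda_n}$ itself, while the leading monomial of $S_k=\mathcal{O}_{\mathfrak{S}_n}(x_1^2\cdots x_{k-1}^2 x_k)$ is $x_1^2 x_2^2\cdots x_{k-1}^2 x_k$, with exponent tuple $c=(2,\ldots,2,1,0,\ldots,0)$. Multiplying these two leading monomials in $\knx$ and collecting via $x_i^a x_j^b=(-1)^{ab}x_j^b x_i^a$ yields, up to a sign, the candidate leading monomial $x_1^{\lambda_1+2}\cdots x_{k-1}^{\lambda_{k-1}+2}x_k^{\lambda_k+1}x_{k+1}^{\lambda_{k+1}}\cdots x_n^{\lambda_n}$.

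Next, I verify maximality and identify all contributors. Every monomial in the product expansion, once reduced to normal form, has exponent tuple $(\alpha_j+\beta_j)_{j=1}^n$, where $(\alpha_j)$ is a rearrangement of the multiset $\{\lambda_1,\ldots,\lambda_n\}$ and $(\beta_j)$ is a rearrangement of $\{c_1,\ldots,c_n\}$. A position-by-position rearrangement argument forces the lex-max: at each $j$, $\alpha_j$ is at most the maximum of the still-unassigned $\lambda$-values and $\beta_j$ at most the maximum of the remaining $c$-values, so $\alpha_j+\beta_j\leq\lambda_j+c_j$ with equality forcing $\alpha_j=\lambda_j$ and $\beta_j=c_j$ as values. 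Thus the candidate monomial is the unique lex-max, and the pairs $(g,h)\in\mathfrak{S}_n\times\mathfrak{S}_n$ in the defining sums of $\mathcal{O}_{\mathfrak{S}_n}(X^I)$ and $S_k$ that produce it are exactly those with $g\in stab(X^I)$ (permuting indices within each block of equal $\lambda$-values) and $h\in stab(x_1^2\cdots x_{k-1}^2 x_k)\cong\mathfrak{S}_{k-1}\times\mathfrak{S}_{n-k}$.

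Finally, I analyze the signs to conclude that the coefficient is nonzero. By the hypothesis that no $\lambda_i$ are both equal and odd (consistent with Lemma \ref{zzlem3.5}), every repeated entry of $\lambda$ is even, as are the repeated entries of $c$ (which are $2$ and $0$). Therefore any transposition within one of these blocks contributes sign $(-1)^{v^2}=+1$ under the $(-1)$-commutation, so the signs $\epsilon_g$ and $\epsilon_h$ arising when reducing $M_g=x_{g(1)}^{\lambda_1}\cdots x_{g(n)}^{\lambda_n}$ and its $S_k$ counterpart $M_h$ to normal form are both $+1$. Moreover, since the normal-form exponent tuples coincide for all contributing pairs, the sign picked up from reordering the product $M_g M_h$ is the same across all of them. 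Summing these contributions and dividing by the stabilizer orders from the orbit-sum definitions leaves coefficient $\pm 1$. The main obstacle is precisely this sign analysis: without the no-repeated-odd-parts hypothesis, a transposition within a block of equal odd values would contribute sign $-1$, which could cause cancellation in the final sum.
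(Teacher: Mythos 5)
Your proof is correct and takes essentially the same route as the paper's: both arguments pin down the leading term by observing that every monomial in the product has exponent tuple obtained by adding a rearrangement of $(2,\ldots,2,1,0,\ldots,0)$ to a rearrangement of $\lambda$, that the greedy (position-by-position) assignment gives the unique lexicographically maximal result, and that this maximal term is attained exactly once so no cancellation can occur. Your explicit stabilizer and sign bookkeeping simply spells out what the paper compresses into the remark that the leading term ``occurs in the product of orbits only once.''
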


\begin{proof}
By our assumption on $I$ the orbit of $X^I$ does not contain another
element with the same entries as $X^I$. Clearly $x_1^{\lambda_{1} +2}
\cdots x_{k-1}^{\lambda_{k-1}+2} x_{k}^{\lambda_{k}+1}
x_{k+1}^{\lambda_{k+1}} \cdots x_n^{\lambda_n}$ is a summand of the
product of $\mathcal{O}_{\mathfrak{S}_n}(x_1^{\lambda_1}
x_2^{\lambda_2} \cdots x_n^{\lambda_n}) S_k$.  This product of
orbits can be written as a linear combination of
$\mathfrak{S}_n$-orbit sums; let $\mathcal{O}_{\mathfrak{S}_n}(X^E)$
be one of these orbit sums.  The entries of the any such partition
$E$  are obtained from the partition $I = (\lambda_1, \cdots,
\lambda_n)$ by adding $2$ to $k-1$ entries of $I$, adding $1$ to one
entry of $I$, and placing these entries into numerical order.  It is
clear that the largest such partition $E$ that can be obtained in
this manner is $(\lambda_1 + 2, \ldots, \lambda_{k-1}+2, \lambda_{k}
+1, \lambda_{k+1}, \cdots, \lambda_n)$, and the leading term of this
$\mathfrak{S}_n$-orbit sum occurs in the product of orbits only
once.
\end{proof}

The following lemma follows essentially as in Gauss's proof for
$\kpx^{\mathfrak{S}_n}$; the super-symmetric polynomials $S_k \in
\knx^{\mathfrak{S}_n}$ play the role of the symmetric polynomials
$\sigma_k$ in $\kpx^{\mathfrak{S}_n}$.

\begin{lemma}
\label{zzlem3.7} Suppose that $f \neq 0$ is a
$\mathfrak{S}_n$-invariant with leading term $x_1^{\lambda_1}
x_2^{\lambda_2} \cdots x_n^{\lambda_n}$ of degree $m$ where at least
one $\lambda_k$ odd. Then there is a positive integer $k$, a
partition  $\lambda^*(m-2k+1)=(\lambda^*_1, \ldots, \lambda^*_n)$ of
$m-2k+1$, and  a $c  \in k^\times$ such that
$$f- c \; \mathcal{O}_{\mathfrak{S}_n}(x_1^{\lambda^*_1} \cdots
x_n^{\lambda^*_n}) S_k$$ has leading term of smaller degree than
$f$.  As a consequence, the fixed subring $\knx^{\mathfrak{S}_n}$ is
generated as an algebra by the $n$ elements $S_k$, for $k=1, \ldots,
n$, and invariants with all even powers, $k[x_1^2, \cdots,
x_n^2]^{\mathfrak{S}_n}$.
\end{lemma}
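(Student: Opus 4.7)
The plan is to mimic Gauss's classical argument for $\kpx^{\mathfrak{S}_n}$, using the super-symmetric polynomials $S_k$ in place of the elementary symmetric functions. By Lemma~\ref{zzlem3.4} the invariant $f$ is a linear combination of $\mathfrak{S}_n$-orbit sums, and by Lemma~\ref{zzlem3.5} every nonzero orbit sum corresponds to a partition with no repeated odd parts. Consequently the leading monomial of $f$ (in length-lex order) has the form $X^I$ where $I=(\lambda_1,\dots,\lambda_n)$ is a partition with no repeated odd parts, and by hypothesis at least one $\lambda_j$ is odd.

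I would then let $k$ be the largest index with $\lambda_k$ odd and set
$$
\lambda^*_i = \begin{cases}
\lambda_i - 2, & i < k, \\
\lambda_k - 1, & i = k, \\
\lambda_i, & i > k,
\end{cases}
$$
which sums to $m-2k+1$. The key verification is that $\lambda^*$ is a valid partition (weakly decreasing, nonnegative) with no repeated odd parts, so that $\mathcal{O}_{\mathfrak{S}_n}(X^{\lambda^*})\neq 0$. Since $\lambda_{k+1}$ is even (by maximality of $k$) while $\lambda_k$ is odd, $\lambda_k>\lambda_{k+1}$, hence $\lambda^*_k=\lambda_k-1\geq\lambda_{k+1}=\lambda^*_{k+1}$; and $\lambda_{k-1}$ is either even (hence $\neq\lambda_k$) or odd and distinct from $\lambda_k$ by the no-repeats hypothesis, so $\lambda_{k-1}\geq\lambda_k+1$ and thus $\lambda^*_{k-1}=\lambda_{k-1}-2\geq\lambda_k-1=\lambda^*_k$. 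Nonnegativity of $\lambda^*_i$ for $i<k$ follows from $\lambda_i\geq\lambda_{k-1}\geq 2$. Finally $\lambda^*_k$ and every $\lambda^*_i$ with $i>k$ are even, while for $i<k$ the parity of $\lambda^*_i$ equals that of $\lambda_i$, so the no-repeats-among-odd-parts property transfers from $\lambda$ to $\lambda^*$.

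By Lemma~\ref{zzlem3.6}, the leading monomial of $\mathcal{O}_{\mathfrak{S}_n}(X^{\lambda^*})S_k$ is exactly $X^I$ with a nonzero coefficient $c_0$; taking $c$ to be the leading coefficient of $f$ divided by $c_0$ makes $f-c\,\mathcal{O}_{\mathfrak{S}_n}(X^{\lambda^*})S_k$ have strictly smaller leading monomial than $f$ in length-lex order. For the consequence, I would induct on the leading monomial of an arbitrary invariant $g\in\knx^{\mathfrak{S}_n}$. If that leading monomial has all even exponents then its leading orbit sum lies in $k[x_1^2,\dots,x_n^2]^{\mathfrak{S}_n}$ (because $x_i^2$ is central in $\knx$, so the orbit involves no sign changes), and subtracting the appropriate multiple drops the leading monomial. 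Otherwise apply the first part: $g-c\,\mathcal{O}_{\mathfrak{S}_n}(X^{\lambda^*})S_k$ has smaller leading monomial, while $\mathcal{O}_{\mathfrak{S}_n}(X^{\lambda^*})$ has leading monomial of strictly smaller total degree ($m-2k+1<m$), so by induction both lie in the algebra generated by $S_1,\dots,S_n$ and $k[x_1^2,\dots,x_n^2]^{\mathfrak{S}_n}$, and hence so does $g$. The main obstacle is the parity/monotonicity case analysis that validates $\lambda^*$; this is where the specific choice of $k$ (largest index with $\lambda_k$ odd) and the Lemma~\ref{zzlem3.5} constraint on $\lambda$ are both essential.
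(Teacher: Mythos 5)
Your proof is correct and takes essentially the same route as the paper's: the same choice of $k$ (the largest index with $\lambda_k$ odd), the same partition $\lambda^*$ obtained by subtracting $2$ from the first $k-1$ parts and $1$ from the $k$th, the same appeal to Lemma~\ref{zzlem3.6} to identify the leading term of $\mathcal{O}_{\mathfrak{S}_n}(X^{\lambda^*})S_k$, and the same finite-descent termination. Your explicit verifications that $\lambda^*$ has no repeated odd parts (so that Lemma~\ref{zzlem3.6} applies and the orbit sum is nonzero) and that an all-even leading orbit sum lies in $k[x_1^2,\ldots,x_n^2]^{\mathfrak{S}_n}$ merely make precise two points the paper's proof leaves implicit.
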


\begin{proof}  $I= (\lambda_i)$ is a partition and hence is
weakly decreasing. Let $k$ be the largest index with $\lambda_k$ odd,
and let $$I^* =
(\lambda_1-2, \lambda_2-2, \ldots, \lambda_{k-1}-2, \lambda_k -1
,\lambda_{k+1}, \ldots, \lambda_n).$$  We claim that $I^*$ is a
weakly decreasing sequence.  First note that since $\lambda_k$ is
odd, $\lambda_k \geq 1$, and for $\ell \geq k+1$ the $\lambda_\ell$
are even and weakly decreasing, so for $\ell \geq k+1$ we have
$\lambda_k \geq \lambda_\ell + 1 \geq \lambda_{\ell+1} +1$, and the
final $n-k+1$ entries of $I^*$ are weakly decreasing. Next, since
$\lambda_k$ is odd and there are no repeated odd exponents in a
nonzero $\mathfrak{S}_n$-orbit sum, we have $\lambda_{k-1} \geq
\lambda_k + 1$ and $\lambda_{j-2}-2 \geq \lambda_{j-1} -2 \geq
\lambda_k -1$ for $3 \leq j \leq k$, so the first $k$ entries of
$I^*$ are weakly decreasing.   Hence by Lemma \ref{zzlem3.6}  we
have
$$x_1^{\lambda_1} x_2^{\lambda_2} \cdots x_n^{\lambda_n}=
(x_1^{\lambda_1-2} \cdots x_{k-1}^{\lambda_{k-1}-2}
x_k^{\lambda_k-1} \cdots x_n^{\lambda_n})(x_1^2 \cdots x_{k-1}^2
x_k)$$ is the leading term in $\mathcal{O}_{\mathfrak{S}_n}(X^{I^*})
S_k$, and if $c$ is the coefficient of the leading term of $f$ then
$c \; \mathcal{O}(X^{I^*}) S_k-f$ has smaller order leading term.
Furthermore $\mathcal{O}(X^{I^*})$ also has smaller order.  Since
there are only a finite number of smaller orders, the algorithm must
terminate when all exponents are even.
\end{proof}



Since the central subring $k[x_{1}^2, \ldots, x_{n}^2]$ of $\knx$ is
a commutative polynomial ring and $\mathfrak{S}_n$ acts on it as
permutations, the invariants $k[x_{1}^2, \ldots,
x_{n}^2]^{\mathfrak{S}_n}$ are generated by either the even power
sums $P_2, \cdots, P_{2n}$ or the $n$ symmetric polynomials in the
squares; in particular, if $\rho_i:=\sigma_i(x_1^2, \cdots, x_n^2)$
for the elementary symmetric function $\sigma_i$, then $k[x_{1}^2,
\cdots, x_{n}^2]^{\mathfrak{S}_n} = k[\rho_1, \rho_2, \ldots ,
\rho_n]$. Since $P_{2k} \in k[\rho_1, \rho_2, \ldots , \rho_n],$
each $P_{2k}$ can be expressed as a polynomial in the elementary
symmetric functions, say
\begin{equation} \label{E3.7.1}\tag{E3.7.1}
P_{2k} = f_{2k}(\rho_1, \rho_2, \ldots , \rho_n).
\end{equation}

Next we show that  $k[x_{1}^2, \ldots, x_{n}^2]^{\mathfrak{S}_n}$ is
contained in the algebra generated by the $n$ odd power sums $P_1,
\ldots, P_{2n-1}$, and $k[x_{1}^2, \ldots,
x_{n}^2]^{\mathfrak{S}_n}$ is contained in the algebra generated by
the $n$ super-symmetric polynomials $S_k$.

\begin{lemma}
\label{zzlem3.8} The fixed subring $k[x_{1}^2, \ldots,
x_{n}^2]^{\mathfrak{S}_n}$ is contained in the algebra generated by
either the odd power sums $P_1, \cdots, P_{2n-1}$ or by the
super-symmetric polynomials $S_1, \cdots, S_n$ in $\knx$.
\end{lemma}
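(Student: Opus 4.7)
The plan is to handle the two containments separately, reducing each to the standard commutative fact that $k[x_1^2,\ldots,x_n^2]^{\mathfrak{S}_n}$ is generated by the even power sums $P_2,P_4,\ldots,P_{2n}$, or equivalently by the elementary symmetric polynomials $\rho_i=\sigma_i(x_1^2,\ldots,x_n^2)$. Since each $x_i^2$ is central in $\knx$, the subring $k[x_1^2,\ldots,x_n^2]\subset\knx$ is a genuine commutative polynomial ring on which $\mathfrak{S}_n$ acts by ordinary permutation, so the classical theory of symmetric functions applies there without modification.

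For the odd power sums, the claim is immediate from Lemma~\ref{zzlem2.4}: part~(4) gives $P_2=P_1^2$, and for each $2\le k\le n$ part~(3) gives $P_{2k}=\tfrac12(P_1P_{2k-1}+P_{2k-1}P_1)$. Thus every even $P_{2k}$ lies in $k\langle P_1,P_3,\ldots,P_{2n-1}\rangle$, and commutative generation by the $P_{2k}$ yields the first containment.

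For the super-symmetric polynomials, by the previous step it suffices to show that $P_{2k-1}\in k\langle S_1,\ldots,S_k\rangle$ for $1\le k\le n$, which I will prove by strong induction on $k$. The base $k=1$ is $P_1=S_1$. For the induction step I will expand $S_k$ by grouping its monomials according to the unique index $t$ carrying a first-power variable. Because products of squared variables are central, this gives $S_k=\sum_{t=1}^{n}x_t\,\rho_{k-1}^{(t)}$, where $\rho_{k-1}^{(t)}$ is the $(k-1)$-st elementary symmetric polynomial in $\{x_1^2,\ldots,x_n^2\}\setminus\{x_t^2\}$. The usual recursive expansion $\rho_{k-1}^{(t)}=\sum_{j=0}^{k-1}(-1)^j x_t^{2j}\rho_{k-1-j}$ then yields
$$S_k=\sum_{j=0}^{k-1}(-1)^j\,\rho_{k-1-j}\,P_{2j+1}.$$
Isolating the $j=k-1$ term expresses $P_{2k-1}$ as $(-1)^{k-1}S_k$ plus a linear combination of terms $\rho_{k-1-j}P_{2j+1}$ with $j\le k-2$. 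By the first containment, each $\rho_i$ with $i\le k-1$ lies in $k\langle P_1,P_3,\ldots,P_{2k-3}\rangle$; by the inductive hypothesis, each of $P_1,P_3,\ldots,P_{2k-3}$ lies in $k\langle S_1,\ldots,S_{k-1}\rangle$; hence so does every factor appearing in the correction, and $P_{2k-1}\in k\langle S_1,\ldots,S_k\rangle$ as desired.

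The main point requiring care is the noncommutative validity of the expansion $S_k=\sum_j(-1)^j\rho_{k-1-j}P_{2j+1}$. Its justification rests entirely on the centrality of each $x_i^2$: this allows every product of squared variables to commute past the lone first-power factor $x_t$, so the classical Newton-type manipulation carries over verbatim and no sign corrections arise. Once this identity is in hand, the rest of the argument is a routine induction.
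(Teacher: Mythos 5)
Your proof is correct, and it is a hybrid: the first containment is exactly the paper's argument, while the second takes a genuinely different route. For the odd power sums you use the same identity the paper records as \eqref{E3.8.1} (from Lemma \ref{zzlem2.4}(3,4)), namely $P_{2i}=(P_1P_{2i-1}+P_{2i-1}P_1)/2$, reducing to the classical fact that the even power sums generate $k[x_1^2,\ldots,x_n^2]^{\mathfrak{S}_n}$. For the super-symmetric polynomials the paper is more direct: it uses the single anticommutator identity $\rho_j=(S_1S_j+S_jS_1)/(2j)$ (equation \eqref{E3.8.2}), which immediately places the elementary symmetric polynomials in the squares inside $k\langle S_1,\ldots,S_n\rangle$. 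You instead establish the Newton-type expansion $S_k=\sum_{j=0}^{k-1}(-1)^j\rho_{k-1-j}P_{2j+1}$ (correctly justified by centrality of the $x_i^2$, which is indeed the only thing preventing sign corrections) and induct to conclude $P_{2k-1}\in k\langle S_1,\ldots,S_k\rangle$, then route back through the first containment. Your argument is longer but buys a genuinely stronger byproduct: a degree-controlled proof that the odd power sums lie in the algebra generated by the $S_k$, which is in effect one direction of the equivalence of the two generating sets in Theorem \ref{zzthm3.10} that the paper obtains separately by a leading-term algorithm (cf.\ Lemma \ref{zzlem3.9} for the reverse direction). One small imprecision to patch: when you assert $\rho_i\in k\langle P_1,P_3,\ldots,P_{2k-3}\rangle$ for $i\leq k-1$, the first containment as you proved it only gives membership in the full algebra $k\langle P_1,\ldots,P_{2n-1}\rangle$; you need the degree-refined statement, which does follow because Newton's identities (in characteristic zero) express $\rho_i$ through $P_2,\ldots,P_{2i}$ alone and each $P_{2m}\in k\langle P_1,P_{2m-1}\rangle$ with $2m-1\leq 2k-3$ --- worth one explicit sentence, but it is not a gap.
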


\begin{proof}
We obtain the even power sums from the odd ones as follows: $P_2 =
P_1^2$, and more generally
\begin{equation}
\label{E3.8.1}\tag{E3.8.1} P_{2i} = (P_1 P_{2i-1} + P_{2i-1} P_1 )/2
\end{equation}
for all $1\leq i\leq n$. Also
\begin{equation}
\label{E3.8.2}\tag{E3.8.2} \rho_j=
\mathcal{O}_{\mathfrak{S}_n}(x_1^2 \cdots x_j^2) = (S_1 S_{j} +
S_{j} S_1 )/(2j)
\end{equation}
 for all $1\leq j\leq
n$.
\end{proof}

The next argument follows as in the case of $\kpx$ \cite[p. 4]{S1}.
Given a monomial $X^I$, we define $\lambda(I)$, the partition
associated with $X^I$, to be the elements of $I$ listed in weakly
decreasing order (i.e. the partition associated to
$\mathcal{O}_{\mathfrak{S}_n}(X^I)$). We define a total order on the
set of monomials as $X^I < X^J$ if the associated partitions have
the property that $\lambda(I)$ is lexicographically {\it larger}
than $\lambda(J)$, or, if the partitions are equal, when $I$ is
lexicographically {\it smaller} than $J$.  As an example for $n=3$
and degree $= 4$
$$x_3^4 < x_2^4 < x_1^4< x_2x_3^3 < x_2^3x_3 < x_1 x_3^3 < x_1x_2^3
< x_1^3x_3 < x_1^3x_2 < x_2^2x_3^2$$
$$<x_1^2x_3^2 < x_1^2x_2^2 < x_1x_2x_3^2<x_1x_2^2x_3<x_1^2x_2x_3.$$
In the case of $\kpx$, where all partitions represent basis elements
in the subring of invariants, in a given degree $k \leq n$ the
``largest" partition is $(1, \ldots, 1, 0 \ldots, 0)$, while the
``smallest" partition is $(k,0 \ldots,0)$. In the case of of $\knx$,
for monomials that correspond to nonzero invariants there are no
repeated odd parts, so for odd degrees $2k-1 \leq 2n-1$, the
partition $(2, \ldots, 2, 1, 0, \ldots, 0)$ is ``largest" under this
order, and while the partition $(2k-1,0 \ldots,0)$ is smallest, and
$x_n^{2k-1}$  is the smallest monomial of degree $2k-1$. Furthermore
in a product of power sums
$$P_{i_1}P_{i_2} \cdots P_{i_k}$$
the leading monomial will be $cx_1^{i_1}x_2^{i_2} \cdots x_k^{i_k}$
for some nonzero integer $c$ when the $i_j$ are weakly decreasing.

\begin{lemma}
\label{zzlem3.9} The  fixed subring $\knx^{\mathfrak{S}_n}$ is
generated by the $n$ odd power sums $P_1, \ldots, P_{2n-1}$.
\end{lemma}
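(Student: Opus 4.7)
The approach is to combine Lemmas \ref{zzlem3.7} and \ref{zzlem3.8} with a leading-term reduction in the order $<$ defined just before the lemma. Set $B := \langle P_1, P_3, \ldots, P_{2n-1} \rangle$. Lemma \ref{zzlem3.8} gives $k[x_1^2, \ldots, x_n^2]^{\mathfrak{S}_n} \subseteq B$, and Lemma \ref{zzlem3.7} reduces the remaining task to showing that each super-symmetric polynomial $S_k$ lies in $B$ for $k = 1, \ldots, n$.

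To prove $S_k \in B$, I would work inside the finite-dimensional space of degree-$(2k-1)$ invariants. By Lemmas \ref{zzlem3.4} and \ref{zzlem3.5} this space has a basis of orbit sums $\mathcal{O}_{\mathfrak{S}_n}(X^\lambda)$ indexed by partitions $\lambda$ of $2k-1$ into at most $n$ parts with no repeated odd parts, and I would induct on these in the order $<$, from $<$-smallest to $<$-largest. The $<$-smallest basis element corresponds to $\lambda = (2k-1, 0, \ldots, 0)$ and equals $P_{2k-1} \in B$, which is a generator since $2k-1 \leq 2n-1$. For any other partition $\lambda = (\lambda_1, \ldots, \lambda_s)$ with $s \geq 2$ nonzero parts, the observation recorded just before the lemma says that $\Pi := P_{\lambda_1} P_{\lambda_2} \cdots P_{\lambda_s}$ has the monomial $x_1^{\lambda_1} x_2^{\lambda_2} \cdots x_s^{\lambda_s}$ as leading term under $<$ with a positive integer coefficient $c$; and since each $\lambda_i \leq 2k-1 \leq 2n-1$ (odd $P_{\lambda_i}$ being a generator, even $P_{\lambda_i}$ lying in $B$ from the previous paragraph), we have $\Pi \in B$. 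Subtracting a suitable scalar multiple of $\Pi$ from $\mathcal{O}_{\mathfrak{S}_n}(X^\lambda)$ produces an invariant supported on orbit sums that are strictly $<$-smaller, all of which lie in $B$ by the inductive hypothesis. This forces $\mathcal{O}_{\mathfrak{S}_n}(X^\lambda) \in B$, and specializing to $\lambda = (2, 2, \ldots, 2, 1, 0, \ldots, 0)$ with $k-1$ twos yields $S_k \in B$.

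The main delicate point is confirming that the leading coefficient $c$ of $\Pi$ is nonzero. Contributions to the monomial $x_1^{\lambda_1} \cdots x_s^{\lambda_s}$ in $\Pi = P_{\lambda_1} \cdots P_{\lambda_s}$ arise from permutations $\sigma \in \mathfrak{S}_s$ stabilizing the tuple $(\lambda_1, \ldots, \lambda_s)$; by the no-repeated-odd-parts hypothesis such a $\sigma$ only permutes positions of equal \emph{even} values, so when reordering the product $x_{\sigma(1)}^{\lambda_1} \cdots x_{\sigma(s)}^{\lambda_s}$ into natural $\knx$-form, every transposition swaps two factors at least one of whose exponents is even, contributing $(-1)^{\lambda_a \lambda_b} = +1$. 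Hence each stabilizing $\sigma$ contributes $+1$, and $c$ equals the positive integer $|\mathrm{Stab}_{\mathfrak{S}_s}(\lambda_1, \ldots, \lambda_s)|$. With this sign bookkeeping in hand the reduction terminates after finitely many steps, since the set of partitions of $2k-1$ is finite.
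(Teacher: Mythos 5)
Your proposal is correct and follows essentially the same route as the paper: both reduce via Lemmas \ref{zzlem3.7} and \ref{zzlem3.8} to expressing the $S_k$ (equivalently, invariants of degree $\leq 2n-1$) in power sums, and then cancel leading terms against products $P_{\lambda_1}\cdots P_{\lambda_s}$ under the order defined just before the lemma, terminating by finiteness. The only difference is that you explicitly verify the nonvanishing of the leading coefficient $c$ (showing $c=|\mathrm{Stab}|$ via the no-repeated-odd-parts sign analysis), a point the paper asserts without proof and which does genuinely require that hypothesis, since e.g.\ $P_1P_1=P_2$ shows $c$ can vanish for repeated odd parts.
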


\begin{proof} By Lemma \ref{zzlem3.8}
the even power sums are generated by the odd power sums $P_1,
\ldots, P_{2n-1}$, so it suffices to show invariants are generated
by power sums $P_k$ for $k \leq 2n-1$. By Lemmas \ref{zzlem3.7} and
\ref{zzlem3.8} the $S_k$ are algebra generators of
$\knx^{\mathfrak{S}_n}$, so it suffices to show they can be
expressed in terms of power sums. Hence it suffices to describe an
algorithm that writes an invariant $f \in \knx^{\mathfrak{S}_n}$ of
degree $\leq 2n-1$ as a product of power sums.  Write the leading
term of $f$ as $a x_1^{i_1}x_2^{i_2} \cdots x_n^{i_n}$ for some $a
\in k^\times$.  The exponents of the leading term are weakly
decreasing, and each is $\leq 2n-1$.  The element $f - \frac{a}{c}
P_{i_1} P_{i_2} \cdots P_{i_n}$ has the same total degree as $f$,
but its leading term is less than that of $f$.  Since there are only
a finite number of monomials of smaller order for a fixed degree,
the algorithm terminates with $f$ written in terms of power sums of
degree $\leq 2n -1$.
\end{proof}



The following theorem of Cameron Atkins follows from the lemmas above,
and gives us two choices of algebra generators for
$\knx^{\mathfrak{S}_n}$. It is often convenient to choose the power
sums, since they have fewer summands.

\begin{theorem}\cite[Theorems 2.2.6 and 2.2.8]{CA}
\label{zzthm3.10} The fixed subring $\knx^{\mathfrak{S}_n}$ is
generated by either the set of the $n$ odd power sums $P_1, \cdots,
P_{2n-1}$ or the set of the $n$ super-symmetric polynomials $S_1,
\cdots, S_n$.
\end{theorem}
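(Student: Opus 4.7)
The plan is to assemble Theorem \ref{zzthm3.10} as a direct consequence of the preceding lemmas in this section, since all of the real work has been done. By Lemmas \ref{zzlem3.4} and \ref{zzlem3.5}, every element of $\knx^{\mathfrak{S}_n}$ is a $k$-linear combination of $\mathfrak{S}_n$-orbit sums $\mathcal{O}_{\mathfrak{S}_n}(X^\lambda)$ whose partitions $\lambda$ have no repeated odd parts. So to verify that a candidate set generates the invariants, I only need to express each such orbit sum as a polynomial in that set.

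For the super-symmetric generators, I would split into two cases according to the parities of the parts of $\lambda$. If some part of $\lambda$ is odd, then Lemma \ref{zzlem3.7} provides an explicit reduction: there exist $c\in k^\times$, an integer $k$, and a partition $\lambda^*$ such that $f - c\,\mathcal{O}_{\mathfrak{S}_n}(X^{\lambda^*})\,S_k$ has strictly smaller leading term. Iterating this finite descent reduces an arbitrary invariant $f$ to an element of $k[x_1^2,\ldots,x_n^2]^{\mathfrak{S}_n}$. Since the subring $k[x_1^2,\ldots,x_n^2]$ is central in $\knx$ and $\mathfrak{S}_n$ permutes its generators, Lemma \ref{zzlem3.8} places this remainder inside the algebra generated by $S_1,\ldots,S_n$ (via the identity $\rho_j=(S_1S_j+S_jS_1)/(2j)$ of \eqref{E3.8.2}). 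Combined, this shows $\knx^{\mathfrak{S}_n}=k\langle S_1,\ldots,S_n\rangle$.

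For the odd power sum generators, the cleanest route is to invoke Lemma \ref{zzlem3.9} directly: the leading-monomial argument there expresses any homogeneous invariant $f$ of degree $\le 2n-1$ as a product of power sums $P_{i_1}\cdots P_{i_n}$ (subtracting off a scalar multiple of such a product to kill the leading term and iterating), and then the even $P_{2i}$ are written in terms of the odd $P_{2i-1}$ via the identity $2P_{2i}=P_1P_{2i-1}+P_{2i-1}P_1$ from \eqref{E3.8.1}. Since every $S_k$ lies in $\knx^{\mathfrak{S}_n}$ and has degree $\le 2k-1\le 2n-1$, this already shows the $S_k$ lie in $k\langle P_1,P_3,\ldots,P_{2n-1}\rangle$, and together with the first half this finishes both directions.

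I do not anticipate a main obstacle: Theorem \ref{zzthm3.10} is essentially a summary statement, and the content is really contained in Lemmas \ref{zzlem3.5}--\ref{zzlem3.9}. The only mild subtlety is checking that the descent in Lemma \ref{zzlem3.7} is compatible with the no-repeated-odd-parts constraint (already done in that lemma's construction of $\lambda^*$), and that the total order on monomials used in Lemma \ref{zzlem3.9} terminates, which is immediate since there are finitely many monomials in any fixed degree.
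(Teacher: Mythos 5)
Your proposal is correct and matches the paper's own treatment: the paper states Theorem \ref{zzthm3.10} as a direct consequence of Lemmas \ref{zzlem3.4}--\ref{zzlem3.9}, exactly as you assemble it (the descent of Lemma \ref{zzlem3.7} reducing to $k[x_1^2,\ldots,x_n^2]^{\mathfrak{S}_n}$, the identities \eqref{E3.8.1} and \eqref{E3.8.2} of Lemma \ref{zzlem3.8} handling the even part, and the leading-term algorithm of Lemma \ref{zzlem3.9} for the odd power sums). Nothing is missing.
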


We next show that the AS Gorenstein domain $\knx^{\mathfrak{S}_n}$
is a cci. First we have to construct a
suitable AS regular algebra.

Let $R= k[p_1, p_2, \ldots , p_n]$ be a commutative polynomial ring,
and let $a_{2i} = f_{2i}(p_1, p_2, \ldots , p_n)$ where the $f_{2i}$
are the polynomials of \eqref{E3.7.1}. Consider the following
iterated Ore extension
$$B=k[p_1, \ldots, p_n][y_1: \tau_1, \delta_1] \cdots
[y_n: \tau_n, \delta_n]$$ where coefficients are written on the
left, $R=k[p_1, \ldots, p_n]$ is a commutative polynomial ring,
$\tau_j$ is the automorphism of $k[p_1, \ldots, p_n][y_1: \tau_1,
\delta_1] \cdots [y_{j-1}: \tau_{j-1}, \delta_{j-1}]$ defined by
$\tau_{j}(y_i) = -y_i$ for $i < j$ and $\tau_j(r) = r$ for $r \in
k[p_1, \ldots, p_n]$, and $\delta_{j}$ is the $\tau_j$-derivation
$\delta_{j}(y_i) = 2 a_{2i+2j-2}$ with $\delta_j(r)=0$ for all $r
\in k[p_1, \ldots, p_n]$.

By Lemma \ref{zzlem2.3}, $\delta_k$ are $\tau_k$-derivation for all
$k$ where $(\tau_k \delta_k)$ appeared in the definition of $B$.


We grade $B$ by setting degree$(p_i) =2i$ and degree$(y_i) =2i-1$.
With this grading the Hilbert series of $B$ is given by
\[ H_B(t) = \frac{1}{(1-t)(1-t^2)\cdots (1-t^{2n-1})(1-t^{2n})}. \]
The algebra $B$ is an AS regular algebra of dimension $2n$.
Let $r_i =y_i^2-a_{4i-2}$ for each $i=1,2, \ldots, n$; it is easy to
see that $r_i$ is a central element of $B$.

\begin{lemma} \label{zzlem3.11}
The sequence $\{r_1, r_2, \ldots , r_n\}$ is a central regular
sequence in $B$.
\end{lemma}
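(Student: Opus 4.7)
The plan is to deduce regularity of the central sequence $\{r_1,\ldots,r_n\}$ from a Hilbert series comparison. For any central homogeneous element $\Omega$ of positive degree in a connected graded algebra $A$, the short exact sequence
$$0\to (A/\ann\Omega)(-\deg\Omega)\xrightarrow{\,\cdot\,\Omega\,}A\to A/\Omega A\to 0$$
yields $H_{A/\Omega A}(t)\ge(1-t^{\deg\Omega})H_A(t)$ coefficient-wise, with equality iff $\Omega$ is a nonzero divisor. Iterating this observation for a sequence of central elements, $\{r_1,\ldots,r_n\}$ is a regular sequence if and only if
$$H_{B/(r_1,\ldots,r_n)}(t)=H_B(t)\prod_{i=1}^n(1-t^{4i-2}),$$
and the chain of inequalities already supplies the ``$\ge$'' direction. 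My job is therefore to establish the matching upper bound.

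For the upper bound I would exploit the iterated Ore extension structure. Because $\tau_j$ is the identity on $R=k[p_1,\ldots,p_n]$ and $\delta_j$ vanishes on $R$, the subring $R$ is central in $B$, and $B$ is a free $R$-module with basis the ordered monomials $\{y_1^{\alpha_1}\cdots y_n^{\alpha_n}:\alpha_i\ge 0\}$. The relations $y_i^2\equiv a_{4i-2}\in R\pmod{(r_1,\ldots,r_n)}$, together with the centrality of $R$, let me reduce each exponent $\alpha_i$ modulo $2$ at the cost of an $R$-scalar which can then be pushed to the left. Hence $B/(r_1,\ldots,r_n)$ is spanned as an $R$-module by the $2^n$ squarefree monomials $y_1^{\epsilon_1}\cdots y_n^{\epsilon_n}$ with $\epsilon_i\in\{0,1\}$, so
$$H_{B/(r_1,\ldots,r_n)}(t)\le H_R(t)\prod_{i=1}^n(1+t^{2i-1}).$$
The target numerical identity $H_B(t)\prod_{i=1}^n(1-t^{4i-2})=H_R(t)\prod_{i=1}^n(1+t^{2i-1})$ then follows at once from $1-t^{4i-2}=(1-t^{2i-1})(1+t^{2i-1})$ together with $H_B(t)=\prod_{k=1}^{2n}(1-t^k)^{-1}$ and $H_R(t)=\prod_{i=1}^n(1-t^{2i})^{-1}$: both sides simplify to $\prod_{i=1}^n(1+t^{2i-1})/\prod_{i=1}^n(1-t^{2i})$. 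The upper and lower bounds thus coincide, forcing equality throughout and showing that each $r_i$ is a nonzero divisor on $B/(r_1,\ldots,r_{i-1})$.

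The only real obstacle is the centrality of the $r_i$, which the authors assert in passing. This reduces to checking $[y_i^2,y_j]=0$ for $i\ne j$; the Ore extension data yield the anticommutation relation $y_iy_j+y_jy_i=2a_{2i+2j-2}\in R$, and left-multiplying by $y_i$ while using that $R$ is central gives $y_i^2y_j=y_jy_i^2$ immediately. Since $a_{4i-2}\in R$ is automatically central, $r_i=y_i^2-a_{4i-2}$ is central, and the Hilbert series comparison above completes the proof.
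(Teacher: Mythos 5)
Your centrality check, the freeness of $B$ over the central subring $R$ with PBW basis of ordered monomials, the reduction modulo $(r_1,\ldots,r_n)$ to the $2^n$ square-free monomials, and the numerical identity $H_B(t)\prod_{i=1}^n(1-t^{4i-2})=H_R(t)\prod_{i=1}^n(1+t^{2i-1})$ are all correct, and your route is genuinely different from the paper's: the paper identifies $B/(r_1,\ldots,r_i)$ with an iterated Ore extension $\overline{B_i}[y_{i+1};\overline{\tau_{i+1}},\overline{\delta_{i+1}}]\cdots[y_n;\overline{\tau_n},\overline{\delta_n}]$ and observes that $r_{i+1}$ is monic of degree $2$ in $y_{i+1}$, so the standard leading-term degree argument gives regularity directly, with no Hilbert series needed. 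However, your lower bound contains a genuine flaw: the single-step coefficientwise inequality $H_{A/\Omega A}\ge(1-t^{\deg\Omega})H_A$ cannot be iterated, because multiplying a coefficientwise inequality by $1-t^{d}$, a polynomial with a negative coefficient, does not preserve it. Indeed, the claimed inequality $H_{A/(\theta_1,\ldots,\theta_r)}(t)\ge H_A(t)\prod_i(1-t^{d_i})$ is false in general: take $A=k[x]/(x^2)$ and $\theta_1=\theta_2=x$, where the left side is $1$ while the right side $(1-t)^2(1+t)=1-t-t^2+t^3$ has coefficient $+1$ in degree $3$. In your situation the inequality does hold, but only because the sequence is in fact regular --- the very thing being proved --- so the argument as written is circular at this step, and the ``regular iff equality'' equivalence you invoke inherits the same gap in its ``if'' direction.

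The gap is repairable without disturbing your architecture, in either of two ways. (i) Read all inequalities as real inequalities at a fixed $t_0\in(0,1)$: writing $A_i=B/(r_1,\ldots,r_i)$ and $d_i=4i-2$, every series in sight converges at $t_0$ (each $A_i$ and each annihilator is a subquotient of $B$, whose series converges on $(0,1)$), each factor $1-t_0^{d_i}$ is a positive number, so the chain $H_{A_n}(t_0)\ge(1-t_0^{d_n})H_{A_{n-1}}(t_0)\ge\cdots\ge\prod_i(1-t_0^{d_i})H_B(t_0)$ is now legitimate; your coefficientwise upper bound gives the reverse inequality at $t_0$, and equality at every intermediate step forces $H_{\ann_{A_{i-1}}(\bar r_i)}(t_0)=0$, hence $\ann_{A_{i-1}}(\bar r_i)=0$ since its coefficients are nonnegative. (ii) Alternatively, iterate the exact identity rather than the inequality: $H_{A_n}=\prod_i(1-t^{d_i})H_B+\sum_{i=1}^n t^{d_i}\bigl(\prod_{j>i}(1-t^{d_j})\bigr)H_{N_i}$ with $N_i=\ann_{A_{i-1}}(\bar r_i)$; if some $N_i\neq 0$, then at the minimal degree of the form $d_i+\deg$(lowest term of $H_{N_i}$) the correction sum has strictly positive coefficient (the factors $\prod_{j>i}(1-t^{d_j})$ only disturb higher degrees), contradicting your coefficientwise upper bound $H_{A_n}\le\prod_i(1-t^{d_i})H_B$ --- so the upper bound alone already forces regularity. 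With either repair your proof is complete; it trades the paper's structural Ore-extension induction for a numerical rigidity argument, and the series you compute is exactly the one the paper later derives in Theorem \ref{zzthm3.12} from the regularity established here.
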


\begin{proof}
First we note that the $r_i$ are central since  $a_{i}$  and $y_i^2$
are central
$$y_i^2 y_j = y_i(-y_jy_i + p_{i+j}) = -y_iy_jy_i + y_i
p_{i+j} = -(y_iy_j + p_{i+j})y_i  = y_jy_i^2.$$

Since $B$ is a domain,  $r_1\neq 0$ is regular in $B$.

Let $B_i = k[p_1, \ldots, p_n][y_1: \tau_1, \delta_1] \cdots [y_i:
\tau_i, \delta_i]$ and let $\overline{B_i} = B_i/(r_1, r_2, \ldots ,
r_i)_{B_i}$. Now consider the algebra $C_i = \overline{B_i}[y_{i+1}:
\overline{\tau_{i+1}}, \overline{\delta_{i+1}}] \cdots [y_n:
\overline{\tau_n}, \overline{\delta_n}]$, where the
$\overline{\tau_j}$ and $\overline{\delta_j}$ are the induced maps.
These maps are well-defined since for $j>i$ and $k \leq i,
\tau_j(r_k) = r_k$ and $\delta_j(r_k) = 0$. Note that $B =
B_i[y_{i+1}: \tau_{i+1}, \delta_{i+1}] \cdots [y_n: \tau_n,
\delta_n]$, and hence every element of $B$ can be written in the
form $\sum_I b_I y^I$ where $b_I\in B_i, I = (e_{i+1}, e_{i+2},
\ldots , e_n)$ is a nonnegative integral vector, and $y^I =
y_{i+1}^{e_{i+1}}y_{i+2}^{e_{i+2}}\cdots y_n^{e_n}.$ The algebra
$B/(r_1, r_2, \ldots , r_i)_B$ is isomorphic to the algebra $C_i$
under the map
\[ \sum_I b_Iy^I + \langle r_1, r_2, \ldots ,
r_i\rangle_B \mapsto \sum_I \bar{b_I}y^I \] where $\bar{b_I}$
denotes reduction mod $( r_1, r_2, \ldots , r_i)_{B_i}.$ Now the
standard polynomial degree argument in $C_i$ shows that the image of
$r_{i+1}$ is regular in $C_i$.
\end{proof}

We now can prove that
$\knx^{\mathfrak{S}_n} \cong B/(r_1, r_2, \ldots , r_n)$
where, by Lemma \ref{zzlem3.11},  each $r_i$ is central in $B$ and
regular in $B/(r_1, r_2, \ldots , r_{i-1})$.

\begin{theorem} \label{zzthm3.12}
The algebra $\knx^{\mathfrak{S}_n}$ is a cci.
\end{theorem}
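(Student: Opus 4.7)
The plan is to construct an explicit isomorphism
$$\bar\phi\colon B/(r_1,\ldots,r_n)\ \xrightarrow{\sim}\ \knx^{\mathfrak{S}_n}$$
by sending $p_i\mapsto\rho_i:=\sigma_i(x_1^2,\ldots,x_n^2)$ and $y_i\mapsto P_{2i-1}$. Combined with Lemma \ref{zzlem3.11}, this exhibits $\knx^{\mathfrak{S}_n}$ as the AS regular algebra $B$ modulo a central regular sequence, which is exactly the definition of a cci.

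The first step is to check that $\phi\colon B\to\knx^{\mathfrak{S}_n}$ is a well-defined graded algebra map. The $\rho_i$ lie in the central subring $k[x_1^2,\ldots,x_n^2]\subset\knx$, matching the centrality of the $p_i$ in $B$. The Ore relation $y_jy_i+y_iy_j=2a_{2i+2j-2}$ for $i<j$ transports via \eqref{E3.7.1} into $P_{2j-1}P_{2i-1}+P_{2i-1}P_{2j-1}=2P_{2i+2j-2}$, which is exactly Lemma \ref{zzlem2.4}(3); and $\phi(r_i)=P_{2i-1}^2-P_{4i-2}=0$ by Lemma \ref{zzlem2.4}(4), so $\phi$ descends to $\bar\phi$. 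Surjectivity of $\bar\phi$ is immediate from Theorem \ref{zzthm3.10}, since the odd power sums $P_1,P_3,\ldots,P_{2n-1}$ generate $\knx^{\mathfrak{S}_n}$ and are in the image.

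Injectivity is the main content. Because $(r_1,\ldots,r_n)$ is a central regular sequence in $B$ of degrees $\deg r_i=4i-2$,
$$H_{B/(r_1,\ldots,r_n)}(t)=H_B(t)\prod_{i=1}^n(1-t^{4i-2})=\prod_{i=1}^n\frac{1+t^{2i-1}}{1-t^{2i}}.$$
The Ore normal form together with the quadratic reductions $y_i^2=a_{4i-2}$ makes $B/(r_1,\ldots,r_n)$ a free left $k[p_1,\ldots,p_n]$-module of rank $2^n$ with basis $\{y^S:=y_{i_1}\cdots y_{i_k}\mid S=\{i_1<\cdots<i_k\}\subseteq\{1,\ldots,n\}\}$. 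Thus it suffices to show that the images $\bar\phi(y^S)=P_{2i_1-1}\cdots P_{2i_k-1}$ are $k[\rho_1,\ldots,\rho_n]$-linearly independent in $\knx^{\mathfrak{S}_n}$: then $\bar\phi$ is a surjection between free $k[\rho_1,\ldots,\rho_n]$-modules of equal rank $2^n$, hence an isomorphism.

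The main obstacle is this linear independence. I would attack it via the leading-term analysis of Section 3: for distinct subsets $S$, after using the anticommutation to reorder factors into weakly decreasing exponents (as in the remark before Lemma \ref{zzlem3.9}), the leading monomial of $P_{2i_k-1}\cdots P_{2i_1-1}$ is a nonzero scalar multiple of $x_1^{2i_k-1}x_2^{2i_{k-1}-1}\cdots x_k^{2i_1-1}$, and these leading monomials remain distinct after multiplication by any element of $k[\rho_1,\ldots,\rho_n]\subset k[x_1^2,\ldots,x_n^2]$; a triangularity argument with respect to the length-lex order then forces any $k[\rho_1,\ldots,\rho_n]$-linear relation among the $\bar\phi(y^S)$ to be trivial. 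An alternative route would be a direct Hilbert series computation via the noncommutative Molien formula $\frac{1}{n!}\sum_{g\in\mathfrak{S}_n}\mathrm{Tr}_{\knx}(g,t)$ using Lemma \ref{xxlem1.7} (equivalently, counting partitions into at most $n$ parts with no repeated odd parts by Lemma \ref{zzlem3.5}), and verifying directly that the sum collapses to $\prod_{i=1}^n(1+t^{2i-1})/(1-t^{2i})$.
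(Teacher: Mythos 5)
Your construction coincides with the paper's proof step for step up to the last point: the same Ore extension $B$, the same map $\phi(p_i)=\rho_i$, $\phi(y_j)=P_{2j-1}$, the same verification that the relations $y_jy_i+y_iy_j=2a_{2i+2j-2}$ and $r_i=y_i^2-a_{4i-2}$ are sent to the identities of Lemma \ref{zzlem2.4}(3,4), surjectivity from Theorem \ref{zzthm3.10}, and Lemma \ref{zzlem3.11} for the central regular sequence. The paper establishes injectivity of $\bar{\phi}$ purely by Hilbert series: since $\{r_i\}$ is a regular sequence of degrees $4i-2$, $H_{\bar{B}}(t)=H_B(t)\prod_{i=1}^n(1-t^{4i-2})$, while by Lemma \ref{zzlem3.5} the invariant ring has a basis of orbit sums indexed by partitions into at most $n$ parts with no repeated odd parts, whose generating function $D_n(t)$ is computed in the Appendix (Proposition \ref{xxpro6.1}, via the recurrence $D_n(t)=D_{n-1}(t)(1+t^{2n-1})/(1-t^{2n})$); the two series agree, and a surjective graded map between locally finite connected graded algebras with equal Hilbert series is an isomorphism. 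This is exactly your ``alternative route,'' and as stated it is complete; you do not even need the Molien sum, since Lemma \ref{zzlem3.5} reduces everything to the partition count.

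Your primary route, however, has a genuine gap in the triangularity step. The module-theoretic frame is sound: $\bar{B}$ is spanned over $k[p_1,\ldots,p_n]$ by the $2^n$ words $y^S$, and the Hilbert series computation shows they are a free basis, so injectivity of $\bar{\phi}$ would indeed follow from $k[\rho_1,\ldots,\rho_n]$-linear independence of the products $P_{2i_1-1}\cdots P_{2i_k-1}$. But the claimed leading-term argument does not prove this independence: the leading monomials of $f_S\,\bar{\phi}(y^S)$ for distinct subsets $S$ need not be distinct, because multiplication by symmetric polynomials in the $x_i^2$ shifts odd exponents by even amounts. Concretely, $\rho_1 P_1$ and $P_3$ both have leading monomial $x_1^3$ in deg-lex order with $x_1>\cdots>x_n$; an odd exponent of the form $2a+(2i-1)$ does not determine $i$, so the subset $S$ cannot be read off the leading monomial, and in a putative relation $f_{\{1\}}P_1+f_{\{2\}}P_3+\cdots=0$ the top terms of different summands can collide. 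Distinct-leading-monomial arguments therefore do not force the coefficients to vanish; you would need a finer invariant (e.g.\ an induction exploiting the unique decomposition of a partition with no repeated odd parts into an even partition plus distinct odd parts) to rescue this route. The clean repair is to discard it and promote your fallback Hilbert-series comparison to the main argument, which is precisely what the paper does.
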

\begin{proof}
By Definition \ref{zzdef3.2} and Lemma \ref{zzlem3.5}
$\knx^{\mathfrak{S}_n}$ as a graded vector space has a basis  of
orbit sums of monomials having no repeated odd exponents.  Hence its
Hilbert series is the same as the generating function for the
restricted partitions having no repeated odd parts.  By Proposition
\ref{xxpro5.1} of the Appendix this Hilbert series is given by
\begin{eqnarray*}
D_n(t)&=& \frac{(1-t^2)(1-t^6)(1-t^{10})\cdots
(1-t^{4n-2})}{(1-t)(1-t^2)(1-t^3) \cdots (1-t^{2n-1})(1-t^{2n})}.
\end{eqnarray*}

Let $\rho_i = \sigma_i(x_1^2, x_2^2, \ldots , x_n^2)$ where
$\sigma_i$ is the $i$th elementary symmetric polynomial. Then the
algebra $k[x_1^2, x_2^2, \ldots , x_n^2]^{\mathfrak{S}_n} =
k[\rho_1, \rho_2, \ldots, \rho_n]$ is a commutative polynomial ring.
By Theorem~\ref{zzthm3.10}, $\knx^{\mathfrak{S}_n}$ is generated as
an algebra by the odd power sums, and hence $\knx^{\mathfrak{S}_n} =
k[\rho_1, \rho_2, \ldots, \rho_n][P_1, P_3, \ldots , P_{2n-1}].$

Consider the iterated Ore extension $B$ constructed above and define
a map $\phi: B \longrightarrow \knx^{\mathfrak{S}_n}$ by $\phi(p_i)
= \rho_i$ and $\phi(y_j) = P_{2j-1}.$ Note that $\phi$ preserves
degree. Clearly $\phi$ takes $R= k[p_1, p_2, \ldots , p_n]$
isomorphically onto  $k[\rho_1, \rho_2, \ldots, \rho_n],$ and  both
subrings are central. In the  iterated Ore extension $B$, we have
for $i < j$ that
\[ y_jy_i + y_iy_j = 2a_{2i+2j-2} =f_{2i+2j-2}(p_1, p_2, \ldots , p_n). \]
Calculation in $\knx^{\mathfrak{S}_n}$ shows that
\[ P_{2j-1}P_{2i-1} + P_{2i-1}P_{2j-1} = 2P_{2i+2j-2} =
2f_{2i+2j-2}(\rho_1, \rho_2, \ldots , \rho_n); \]
hence
\[ \phi(y_j)\phi(y_i) + \phi(y_i)\phi(y_j) = 2\phi(a_{2i+2j-2}). \]
Hence the skew extension relations are preserved, and we conclude
that $\phi$ is a graded ring homomorphism.  Since the odd power sums
$P_1, P_3, \ldots , P_{2n-1}$ generate $\knx^{\mathfrak{S}_n}$ as an
algebra by Theorem~\ref{zzthm3.10}, the homomorphism $\phi$ is an
epimorphism.

Calculation yields
\begin{eqnarray*}
0 &=& P_{2i-1}^2 -P_{4i-2} = P_{2i-1}^2 - f_{4i-2}
(\rho_1, \rho_2, \ldots , \rho_n) \\
&=& \phi(y_i^2 - a_{4i-2}) = \phi(r_i).
\end{eqnarray*}
Hence the ideal $(r_1, r_2, \ldots , r_n) \subseteq \ker(\phi)$, and
$\phi$ induces a graded ring homomorphism
\[ \bar{\phi}: B/(r_1, r_2, \ldots , r_n) \longrightarrow
\knx^{\mathfrak{S}_n} . \]
Since for each $i$ the degree of $r_i$ is $4i-2$ and $\{r_1, r_2,
\ldots ,r_n\}$ is a regular sequence, the Hilbert series of $\bar{B}
=B/(r_1, r_2, \ldots , r_n)$ is given by
\begin{eqnarray*}
 H_{\bar{B}}(t)&=& \frac{(1-t^2)(1-t^6)(1-t^{10})\cdots (1-t^{4n-2})}
 {(1-t)(1-t^2)(1-t^3) \cdots (1-t^{2n-1})(1-t^{2n})}.
 \end{eqnarray*}
This shows that $\bar{\phi}$ is an isomorphism.
\end{proof}

\begin{definition}
\label{zzdef3.13} Let $A$ be a connected graded noetherian algebra.
\begin{enumerate}
\item
We say $A$ is a {\it classical complete intersection$^+$} (or a {\it
cci$^+$}) if there is a connected graded noetherian AS regular
algebra $R$ with $H_R(t)=\frac{1}{\prod_{i=1}^n (1-t^d_i)}$ and a
sequence of regular normal homogeneous elements
$\{\Omega_1,\ldots,\Omega_n\}$ of positive degree such that $A$ is
isomorphic to $R/(\Omega_1,\ldots,\Omega_n)$. The minimum such $n$
is called the {\it cci$^+$-number} of $A$ and denoted by $cci^+(A)$.
\item
Let $A$ be cyclotomic (e.g., $A$ is cci). The $cyc$-number of $A$,
denoted by $cyc(A)$,
is defined to be $v$ if the Hilbert series of $A$ is of the form
$$
H_A(t)=\frac{\prod_{s=1}^v (1-t^{m_s})}{\prod_{s=1}^w (1-t^{n_s})}
$$
where $m_s\neq n_{s'}$ for all $s$ and $s'$.
\end{enumerate}
\end{definition}

Clearly we have $cci^{+}(A)\geq cci(A)$. It is a conjecture that
every noetherian AS regular algebra has Hilbert series of the form
$\frac{1}{\prod_{i=1}^n (1-t^d_i)}$. If this conjecture holds,
then being cci$^+$ is equivalent to being
cci and $cci^{+}(A)= cci(A)$. One can easily show that
the expression of $H_A(t)$ in Definition \ref{zzdef3.13}(2) is unique
(as we assume that $m_s\neq n_{s'}$ for all $s,s'$). It follows from
the definition that $cci^+(A)\geq cyc(A)$.
Finally we would like to calculate $cci^+(\knx^{\mathfrak{S}_n})$.

\begin{theorem}
\label{zzthm3.14} $cci^+(\knx^{\mathfrak{S}_n})
=cyc(\knx^{\mathfrak{S}_n})=\lfloor \frac{n}{2}\rfloor$.
\end{theorem}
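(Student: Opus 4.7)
My plan is to prove $\lfloor n/2 \rfloor = cyc(\knx^{\mathfrak{S}_n}) \le cci^+(\knx^{\mathfrak{S}_n}) \le \lfloor n/2 \rfloor$, using the general inequality $cci^+(A) \ge cyc(A)$ noted after Definition~\ref{zzdef3.13} for the middle step. For the cyclotomic number, I would simplify the Hilbert series
\[
H_{\knx^{\mathfrak{S}_n}}(t) = \frac{\prod_{i=1}^{n}(1-t^{4i-2})}{\prod_{j=1}^{2n}(1-t^j)}
\]
derived in the proof of Theorem~\ref{zzthm3.12}: a numerator factor $(1-t^{4i-2})$ cancels against its identical denominator factor exactly when $4i-2 \le 2n$, i.e.\ for $i = 1, \ldots, \lceil n/2 \rceil$. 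The surviving $\lfloor n/2 \rfloor = n - \lceil n/2 \rceil$ numerator factors have exponents strictly exceeding $2n$ and so cannot cancel any remaining denominator factor, giving $cyc(\knx^{\mathfrak{S}_n}) = \lfloor n/2 \rfloor$.

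For the upper bound I would realize $\knx^{\mathfrak{S}_n}$ as the quotient of an AS regular algebra by a length-$\lfloor n/2 \rfloor$ central regular sequence. Start from the iterated Ore extension $C_2$ of Lemma~\ref{zzlem2.4}(6), whose generators are $P_1, P_3, \ldots, P_{n'}$ together with $P_4, P_8, \ldots, P_{4\lfloor n/2 \rfloor}$. By Lemma~\ref{zzlem2.4}(4,5) the commutative central subring $k[P_2, P_4, \ldots, P_{2n}]$ is contained in $C_2$, because the missing entries $P_{4j-2}$ with $2j-1 \le n'$ equal $P_{2j-1}^2$. I then adjoin the remaining odd power sums $P_{n'+2}, P_{n'+4}, \ldots, P_{2n-1}$---exactly $\lfloor n/2 \rfloor$ of them---one at a time as Ore variables $y_1, \ldots, y_{\lfloor n/2 \rfloor}$: each $\tau_k$ negates every odd power-sum generator (including the previously added $y_\ell$'s) and fixes the even generators, and each $\delta_k$ sends an odd generator $z$ to $2P_{(\deg z)+(\deg y_k)}$ (rewritten via Newton's identities as an element of $k[P_2,\ldots,P_{2n}] \subset C_2$) and fixes the even generators at $0$. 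An inductive application of Lemma~\ref{zzlem2.3} confirms that each $(\tau_k, \delta_k)$ is consistent with all previously imposed relations, so the resulting $R$ is an AS regular iterated Ore extension whose Hilbert series has the shape required by Definition~\ref{zzdef3.13}(1).

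Finally I would set $\Omega_k := y_k^2 - P_{2\deg y_k}$, with the right-hand side expanded inside $C_2$. Each $\Omega_k$ is central in $R$ since both $y_k^2$ (corresponding to the central power sum $P_{2\deg y_k}$) and every $P_{2j}$ are central by Lemma~\ref{zzlem2.4}(2). The graded map $R \to \knx^{\mathfrak{S}_n}$ sending each generator to the eponymous power sum is well defined (its Ore relations become the identities $P_j P_i + P_i P_j = 2 P_{i+j}$ of Lemma~\ref{zzlem2.4}(3)), is surjective by Theorem~\ref{zzthm3.10}, and annihilates each $\Omega_k$. A routine computation shows that $H_R(t) \prod_k (1-t^{2\deg y_k})$ reduces to the simplified Hilbert series from the first paragraph, so the induced surjection $R/(\Omega_1,\ldots,\Omega_{\lfloor n/2 \rfloor}) \twoheadrightarrow \knx^{\mathfrak{S}_n}$ is a graded isomorphism; the Hilbert-series match simultaneously certifies that $\{\Omega_1, \ldots, \Omega_{\lfloor n/2 \rfloor}\}$ is a regular sequence and that $cci^+(\knx^{\mathfrak{S}_n}) \le \lfloor n/2 \rfloor$. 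The main obstacle is the inductive construction of $(\tau_k, \delta_k)$---in particular, verifying that $\delta_k$ annihilates every previously imposed skew-commutator $y_\ell y_m + y_m y_\ell - 2 P_{j_\ell + j_m}$ as well as the Newton identities expressing $P_{2j}$ for $j > n$ in terms of $P_2, \ldots, P_{2n}$; the template is Lemma~\ref{zzlem2.3}(b), but the bookkeeping across all layers of the Ore tower is delicate.
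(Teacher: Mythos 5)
Your construction is, step for step, the paper's own: the same subalgebra $C_2$, the same tower adjoining the $\lfloor \frac{n}{2}\rfloor$ remaining odd power sums $P_{n'+2},\ldots,P_{2n-1}$ as Ore variables with sign-twist automorphisms and derivations $z\mapsto 2P_{\deg z+\deg y_k}$, the same central elements $y_k^2-P_{2\deg y_k}$, and the same cancellation computation giving $cyc(\knx^{\mathfrak{S}_n})=\lfloor\frac{n}{2}\rfloor$. The one genuine gap is your final inference that the Hilbert-series match ``simultaneously certifies'' that $\{\Omega_1,\ldots,\Omega_{\lfloor n/2\rfloor}\}$ is regular and that $\bar\phi$ is an isomorphism. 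Writing $\bar R=R/(\Omega_1,\ldots,\Omega_m)$ and $K=\ker\bar\phi$, the data you actually have are $H_{\bar R}(t)=H_{\knx^{\mathfrak{S}_n}}(t)+H_K(t)$ together with the identity $H_{\knx^{\mathfrak{S}_n}}(t)=\prod_k(1-t^{2\deg y_k})H_R(t)$; these only say that the Hilbert-series excess of $\bar R$ over the expected series equals $H_K$. A failure of regularity can be exactly offset by a nonzero kernel, so nothing forces both to vanish. A toy example: $R=k[x,y]/(y^2)$, $\Omega=y$, with the surjection $R/(\Omega)=k[x]\twoheadrightarrow k[x]/(x^2)$; here $H_{k[x]/(x^2)}=(1-t)H_R$ holds on the nose, yet $y$ is not regular and the kernel is $(x^2)$. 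Note that your $R$ being a domain rescues only $\Omega_1$; the quotients $R/(\Omega_1,\ldots,\Omega_{k-1})$ at later stages are not domains, so the circularity is real once $\lfloor\frac{n}{2}\rfloor\geq 2$.

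The repair is exactly what the paper does: prove regularity \emph{independently} by the argument of Lemma \ref{zzlem3.11}, and only then use the series equality to conclude $\bar\phi$ is injective. Since $\tau_j(\Omega_k)=\Omega_k$ and $\delta_j(\Omega_k)=0$ for later stages $j$ (indeed $\tau_j(y_k^2)=(-y_k)^2=y_k^2$, and $\delta_j(y_k^2)=\tau_j(y_k)\delta_j(y_k)+\delta_j(y_k)y_k=-y_k\cdot 2P_{\deg y_k+j}+2P_{\deg y_k+j}\,y_k=0$ because the even power sums are central, while $\delta_j$ kills $P_{2\deg y_k}$), the quotient of the tower by $(\Omega_1,\ldots,\Omega_{k-1})$ is again an iterated Ore extension over the truncated quotient, and in it the image of $\Omega_k$ is monic of degree $2$ in its Ore variable $y_k$; the standard leading-term degree argument then shows it is regular. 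With regularity established, $H_{\bar R}=\prod_k(1-t^{2\deg y_k})H_R=H_{\knx^{\mathfrak{S}_n}}$ forces $K=0$, and the rest of your proof --- the lower bound via $cci^+(A)\geq cyc(A)$ and the count of surviving numerator factors $(1-t^{4i-2})$ with $4i-2>2n$ --- is correct and agrees with the paper.
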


\begin{proof} First we prove the claim that $cci^+(\knx^{\mathfrak{S}_n})
\leq \lfloor \frac{n}{2} \rfloor$.

Let $C_2$ be the subalgebra of $\knx^{\mathfrak{S}_n}$
defined before Lemma \ref{zzlem2.4}. By Lemma \ref{zzlem2.4}(6), it
is isomorphic to the iterated Ore extension
$$k[P_4,P_8,\cdots, P_{4 \lfloor \frac{n}{2}\rfloor}]
[P_1][P_3;\tau_{3},\delta_3]\cdots [P_{n'};\tau_{n'},\delta_{n'}]$$
where $n'=2\lfloor \frac{n-1}{2}\rfloor+1$. By Lemma \ref{zzlem2.4}(5),
$C_2$ contains $P_{2i}$ for all $i\geq 1$. Let $F_{n'}:=C_2$, and for
any odd integer $n'< j\leq 2n-1$, we inductively construct a sequence
of iterated Ore extensions $F_{j}=F_{j-2}[P_j, \tau_j, \delta_j]$
where $\tau_{j}$ is defined by $\tau_{j}(P_{s})=(-1)^s P_{s}$
for all even $s$ and for all odd $s\leq j-2$, and where the
$\tau_j$-derivation $\delta_j$ is defined by
$\delta_j(P_s)=\begin{cases} 0& {\text{ if $s$ is even}}\\
2P_{s+j} & {\text{ if $s$ is odd}}.\end{cases}.$ It follows from
the induction and Lemma \ref{zzlem2.3} that $\tau_j$ is an
automorphism of $F_{j-2}$ and $\delta_j$ is a $\tau_j$-derivation of
$F_{j-2}$. Therefore $F_{j}$ (and whence $F_{2n-1}$) is an iterated
Ore extension (which is a noetherian AS regular algebra with Hilbert
sires of the form $(\prod_{i=1}^n (1-t^{d_i}))^{-1}$). Let
$u_s=P_{2s-1}^2-P_{4s-2}$ for all integers from $s=\lfloor
\frac{n-1}{2}\rfloor+2$ to $s=n$. The proof of Lemma \ref{zzlem3.11}
shows that $\{u_{\lfloor \frac{n-1}{2}\rfloor+2}, \cdots, u_{n}\}$
is a central regular sequence of $F_{2n-1}$. It is easy to see that
$F_{2n-1}/(u_{\lfloor \frac{n-1}{2}\rfloor+2}, \cdots, u_{n})\cong
\knx^{\mathfrak{S}_n}$. Therefore $cci^{+}(\knx^{\mathfrak{S}_n})\leq
n-(\lfloor \frac{n-1}{2}\rfloor+1)= \lfloor \frac{n}{2}\rfloor$ and
we proved the claim.

By Theorem \ref{zzthm3.12}
$$\begin{aligned}
H_{\knx^{\mathfrak{S}_n}}(t)&= H_{\bar{B}}(t)
=\frac{(1-t^2)(1-t^6)(1-t^{10})\cdots (1-t^{4n-2})}
 {(1-t)(1-t^2)(1-t^3) \cdots (1-t^{2n-1})(1-t^{2n})}\\
&=\frac{\prod_{s=\lfloor \frac{n-1}{2}\rfloor+2}^n (1-t^{4s-2})}
{\prod_{j=1}^{\lfloor \frac{n}{2}\rfloor}(1-t^{4j}) \prod_{i=1}^n
(1-t^{2i-1})}
\end{aligned}
$$
which is an expression satisfying the condition in Definition
\ref{zzdef3.13}(2). Hence
$$cyc(\knx^{\mathfrak{S}_n})=\lfloor \frac{n}{2}\rfloor.$$
The assertion follows from the claim and the fact
$cci^+(A)\geq cyc(A)$.
\end{proof}

\section{Invariants under $\mathfrak{A}_n$}
\label{zzsec4}

First let us review the classical case. Let $\mathfrak{A}_n$ be
the alternating group. Any element of
$\kpx^{\mathfrak{A}_n}$ can be written uniquely as $h_1 + D h_2$,
where $h_1$ and $h_2$ are
symmetric polynomials and $D$ is the ``Vandermonde determinant"
$$D= D(x_1, \cdots, x_n)= \prod_{i < j}(x_i-x_j)$$
\cite[p. 5]{S1}.  Hence the maximal degree of a minimal set of
generators of  $\kpx^{\mathfrak{A}_n}$ is
$\dbinom{n}{2}.$
A polynomial $f$ is called ``antisymmetric" if $\tau f = -f$ for
every odd permutation $\tau \in \mathfrak{S}_n$ \cite[p. 5]{S1}; $D$
is the smallest degree antisymmetric element of
$\kpx^{\mathfrak{A}_n}$. Moreover, $D^2$ is a symmetric polynomial,
the Hilbert series of $\kpx^{\mathfrak{A}_n}$ is
$$\frac{1+t^{r}}{\prod_{i=1}^n(1-t^i)} =
\frac{1-t^{2r}}{(1-t^r)\prod_{i=1}^n(1-t^i)}$$
for $r= \dbinom{n}{2}$ \cite[pp. 104-5]{Be}, and hence
$\kpx^{\mathfrak{A}_n}$ is isomorphic to the complete intersection
$$\frac{k[\sigma_1, \ldots, \sigma_n][y]}{(y^2-D^2)}$$
under the map that associates $y$ to $D$ (and the symmetric polynomial
in the $x_i$ to $\sigma_i$). Following Definition \ref{zzdef3.13},
one easily gets
$$cci(\kpx^{\mathfrak{A}_n})=cci^+(\kpx^{\mathfrak{A}_n})=
cyc(\kpx^{\mathfrak{A}_n})=1.$$
The group $\mathfrak{A}_n$ is generated by 3-cycles, which have trace
$${\rm Tr}_{\kpx}(g,t)= \frac{1}{(1-t^3)(1-t)^{n-3}},$$
and hence are bireflections of $\kpx$; the 3-cycles are a generating
set of bireflections that the Kac-Watanabe-Gordeev Theorem states
must exist since $\kpx^{\mathfrak{A}_n}$ is a complete intersection.

In this section we consider the analogous situation for
$\knx^{\mathfrak{A}_n}$ for $n\geq 3$. As a general setup, we are
working with the noncommutative algebra $\knx$ unless otherwise
stated. Again there is an overlap between \cite{CA} and this
section.

The trace of a 3-cycle $g$ acting on $\knx$
is also $${\rm Tr}_{\knx}(g,t) =\frac{1}{(1-t^3)(1-t)^{n-3}},$$
hence $\mathfrak{A}_n$ is generated by quasi-bireflections of
$\knx$. The aim of this section is to show that
$\knx^{\mathfrak{A}_n}$ is a cci, which
is consistent with the conjectured generalization of the
Kac-Watanabe-Gordeev Theorem. Here the smallest degree antisymmetric
polynomial is $\mathcal{O}_{\mathfrak{A}_n} (x_1x_2 \cdots
x_{n-1})$, and the subring of invariants $\knx^{\mathfrak{A}_n}$ is
generated by $\mathcal{O}_{\mathfrak{A}_n} (x_1x_2 \cdots x_{n-1})$,
and either the $n-1$ super-symmetric polynomials $S_1, \ldots,
S_{n-1}$ or the power sums $P_1, \ldots, P_{2n-3}$, and so an upper
bound on the degrees of generators of $\knx^{\mathfrak{A}_n}$ is
$2n-3$. We will show that the Hilbert series of
$\knx^{\mathfrak{A}_n}$ is given by
\begin{equation}  \label{E4.0.1}\tag{E4.0.1}
H_{\knx^{\mathfrak{A}_n}}(t)\quad =\quad
\frac{(1+t) (1+t^{3}) \cdots (1+t^{2n-3})
(1+t^n)(1+t^{n-1})}{(1-t^2)(1-t^4)\cdots(1-t^{2n})}.
\end{equation}

We construct invariants under $\mathfrak{A}_n$ as
$\mathcal{O}_{\mathfrak{A}_n}(X^I)$, the sum of the orbit of a
monomial $X^I$ under $\mathfrak{A}_n$ [Definition \ref{zzdef3.2}];
we note that the number of terms in this sum is the index of the
$\mathfrak{A}_n$-stabilizer of $X^I$ in $\mathfrak{A}_n$.

\begin{lemma}\cite[Lemma 4.1.1]{CA}
\label{zzlem4.1}
If there is an odd permutation that stabilizes
$X^I$ then $f = \mathcal{O}_{\mathfrak{A}_n}(X^I)$ is also invariant
under the full symmetric group $\mathfrak{S}_n$.
\end{lemma}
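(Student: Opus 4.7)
The plan is a straightforward coset argument, with the only subtlety being that equality in the stabilizer condition $\tau(X^I)=X^I$ is taken inside $\knx$, including the $\pm$ sign that arises from reordering the variables under the $(-1)$-commutation.

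First I would observe that $\mathcal{O}_{\mathfrak{A}_n}(X^I)$ is automatically $\mathfrak{A}_n$-invariant by its construction in Definition \ref{zzdef3.2}: for $\sigma\in\mathfrak{A}_n$, left multiplication by $\sigma$ merely reindexes the sum, since $\sigma g$ ranges over $\mathfrak{A}_n$ as $g$ does. Because $\mathfrak{S}_n=\mathfrak{A}_n\sqcup\tau\mathfrak{A}_n$ for any odd permutation $\tau$, it then suffices to exhibit one odd $\tau$ under which the orbit sum is fixed. By hypothesis such a $\tau$ exists, namely one with $\tau(X^I)=X^I$ (as an element of $\knx$).

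Next I would compute, using that the $\mathfrak{S}_n$-action on $\knx$ is $k$-linear,
\[
\tau\cdot\mathcal{O}_{\mathfrak{A}_n}(X^I)=\frac{1}{|stab_{\mathfrak{A}_n}(X^I)|}\sum_{g\in\mathfrak{A}_n}(\tau g)(X^I)=\frac{1}{|stab_{\mathfrak{A}_n}(X^I)|}\sum_{h\in\tau\mathfrak{A}_n}h(X^I),
\]
where the last equality comes from reindexing $h=\tau g$. On the other hand, since $\tau(X^I)=X^I$ inside $\knx$, we have $g\tau(X^I)=g(X^I)$ for every $g\in\mathfrak{A}_n$, and therefore
\[
\sum_{g\in\mathfrak{A}_n}g(X^I)=\sum_{g\in\mathfrak{A}_n}(g\tau)(X^I)=\sum_{h\in\mathfrak{A}_n\tau}h(X^I).
\]
Because $\mathfrak{A}_n$ is normal in $\mathfrak{S}_n$ we have $\tau\mathfrak{A}_n=\mathfrak{A}_n\tau$, so the two right-hand sums are identical, giving $\tau\cdot\mathcal{O}_{\mathfrak{A}_n}(X^I)=\mathcal{O}_{\mathfrak{A}_n}(X^I)$. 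Combined with $\mathfrak{A}_n$-invariance, this proves $\mathfrak{S}_n$-invariance.

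The only place one could stumble is the sign bookkeeping in $\knx$: one must remember that $\tau(X^I)=X^I$ is already an equation in $\knx$, so no extra minus sign reappears when we substitute into $g\tau(X^I)=g(X^I)$. Everything else is pure coset reindexing, so this is more a remark than a real obstacle.
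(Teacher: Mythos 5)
Your proof is correct, and it takes a genuinely different route from the paper's. The paper argues via orbit--stabilizer counting: an odd permutation in $stab_{\mathfrak{S}_n}(X^I)$ forces $[stab_{\mathfrak{S}_n}(X^I):stab_{\mathfrak{A}_n}(X^I)]=2$, hence $[\mathfrak{S}_n:stab_{\mathfrak{S}_n}(X^I)]=[\mathfrak{A}_n:stab_{\mathfrak{A}_n}(X^I)]$, so the $\mathfrak{A}_n$-orbit of $X^I$ coincides with its full $\mathfrak{S}_n$-orbit and the two orbit sums are literally the same element, $\mathcal{O}_{\mathfrak{A}_n}(X^I)=\mathcal{O}_{\mathfrak{S}_n}(X^I)$, which is manifestly symmetric. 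You instead work entirely on the group side: $\mathfrak{A}_n$-invariance by reindexing, then a coset computation showing that the specific stabilizing odd permutation $\tau$ fixes the orbit sum, using normality to identify $\tau\mathfrak{A}_n=\mathfrak{A}_n\tau$, and finally the decomposition $\mathfrak{S}_n=\mathfrak{A}_n\sqcup\tau\mathfrak{A}_n$. Your version is more formal and more general --- it uses nothing about $\knx$ beyond the fact that $\mathfrak{S}_n$ acts by linear algebra automorphisms, so it would apply verbatim to any normal subgroup of index $2$ in any group acting on any vector space; and your explicit remark that the stabilizer condition $\tau(X^I)=X^I$ is an equality \emph{in} $\knx$ (sign included) is exactly the right subtlety to isolate, since it is what makes $(g\tau)(X^I)=g(X^I)$ legitimate. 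What the paper's counting argument buys in exchange is the stronger intermediate identity $\mathcal{O}_{\mathfrak{A}_n}(X^I)=\mathcal{O}_{\mathfrak{S}_n}(X^I)$, which is implicitly reused later (for instance in the proof of Lemma \ref{zzlem4.3}), whereas your argument establishes $\mathfrak{S}_n$-invariance without ever identifying the two orbit sums.
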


\begin{proof}
Since the index of the subgroup $stab_{\mathfrak{A}_n}(X^I)$ in
$stab_{\mathfrak{S}_n}(X^I)$ is less than or equal to
$[\mathfrak{S}_n : \mathfrak{A}_n]=2$, if there is an odd
permutation that stabilizes $X^I$ then the index
$[stab_{\mathfrak{S}_n}(X^I): stab_{\mathfrak{A}_n}(X^I)]= 2$, and
the order of the orbit $X^I$ under $\mathfrak{S}_n =
[\mathfrak{S}_n: stab_{\mathfrak{S}_n}(X^I)]$ is the same as the
order of the orbit of $X^I$ under $\mathfrak{A}_n = [\mathfrak{A}_n:
stab_{\mathfrak{A}_n}(X^I)]$, so the orbit sum of $X^I$ under
$\mathfrak{S}_n$ is the same as that under $\mathfrak{A}_n$; hence
$\mathcal{O}_{\mathfrak{A}_n}(X^I)$, the orbit sum of $X^I$ under
$\mathfrak{A}_n$, is $\mathfrak{S}_n$-invariant.
\end{proof}

Here is an immediate consequence.

\begin{corollary}
\label{zzcor4.2} If $I=(i_j)$ with at least 2 indices $i_j = i_k$,
an even number, then $\mathcal{O}_{\mathfrak{A}_n}(X^I)$ is an
$\mathfrak{S}_n$-invariant.  In particular if there are at least 2
indices $i_j = i_k = 0$ then  $\mathcal{O}_{\mathfrak{A}_n}(X^I)$ is
an $\mathfrak{S}_n$-invariant.
\end{corollary}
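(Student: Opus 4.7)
The plan is to apply Lemma \ref{zzlem4.1}: it suffices to exhibit a single odd permutation in $\mathfrak{S}_n$ that fixes $X^I$ inside $\knx$. So I would pick the natural candidate, the transposition $\tau=(j,k)$ itself, where $j\neq k$ are two positions with $i_j=i_k$ even.

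First I would apply $\tau$ directly to $X^I=x_1^{i_1}\cdots x_n^{i_n}$, which produces
\[
\tau(X^I)=x_1^{i_1}\cdots x_k^{i_j}\cdots x_j^{i_k}\cdots x_n^{i_n},
\]
i.e.\ the factors at positions $j$ and $k$ get swapped while the others remain untouched. To rewrite this back in the standard order $x_1^{\bullet}\cdots x_n^{\bullet}$, I would need to commute $x_j^{i_k}$ leftward past the intervening variables and $x_k^{i_j}$ rightward past them. The key observation, which I would record as a one-line calculation from the defining relation $x_ax_b=-x_bx_a$ (namely $x_a^2 x_b = -x_a x_b x_a = x_b x_a^2$), is that each square $x_i^2$ is central in $\knx$.

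Because the exponents $i_j=i_k$ are even, the factors $x_j^{i_k}$ and $x_k^{i_j}$ are products of squares and hence central, so they slide past all intermediate generators without introducing any sign. Therefore $\tau(X^I)=X^I$ in $\knx$, so the odd permutation $\tau$ lies in $stab_{\mathfrak{S}_n}(X^I)$, and Lemma \ref{zzlem4.1} yields that $\mathcal{O}_{\mathfrak{A}_n}(X^I)$ is $\mathfrak{S}_n$-invariant. The ``in particular'' statement is immediate, since $0$ is even. There is really no obstacle here; the whole argument boils down to the centrality of squares in $\knx$, which is exactly the feature that makes the even-exponent case behave as in the commutative setting.
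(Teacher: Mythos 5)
Your proof is correct and takes essentially the same route as the paper, which states the corollary as an immediate consequence of Lemma \ref{zzlem4.1} with the transposition $(j,k)$ as the odd stabilizing permutation, exactly as you do. Your explicit justification via the centrality of the squares $x_i^2$ in $\knx$ is the same observation the paper records at the end of the proof of Lemma \ref{zzlem3.5} (``when indices with even exponents of the same value are permuted no sign change occurs'').
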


\begin{lemma}\cite[Lemma 4.1.2]{CA}
\label{zzlem4.3} An $\mathfrak{A}_n$-orbit sum
$\mathcal{O}_{\mathfrak{A}_n}(X^I) = 0$ if and only if $I$ has at
least two indices $i_j=i_k$ an even number, and two indices $i_r =
i_s$ an odd number.
\end{lemma}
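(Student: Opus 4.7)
The approach is to reduce the vanishing of $\mathcal{O}_{\mathfrak{A}_n}(X^I)$ to the existence of some $\sigma \in \mathfrak{A}_n$ with $\sigma X^I = -X^I$ in $\knx$, compute this sign explicitly via the $(-1)$-commutation rules, and then combine with the parity constraint that $\sigma$ be even. Partitioning $\mathfrak{A}_n$ into cosets of $stab_{\mathfrak{A}_n}(X^I)$, the orbit sum $\sum_{g \in \mathfrak{A}_n} g X^I$ decomposes into terms $\pm X^I$ indexed by cosets, and these cancel in pairs precisely when both signs occur; hence $\mathcal{O}_{\mathfrak{A}_n}(X^I)=0$ iff some $\sigma \in \mathfrak{A}_n$ acts by $-1$ on $X^I$. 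Any such $\sigma$ must preserve the exponent multiset of $I$, and so permutes each level set $S_e := \{j : i_j = e\}$; write $\sigma_e := \sigma|_{S_e}$.

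Next I compute the sign. Bubble-sorting $x_{\sigma(1)}^{i_1} \cdots x_{\sigma(n)}^{i_n}$ into the standard form $x_1^{i_1}\cdots x_n^{i_n}$ using $x_a^p x_b^q = (-1)^{pq} x_b^q x_a^p$ for $a \neq b$, each inversion $j<k$ with $\sigma(j) > \sigma(k)$ contributes a factor $(-1)^{i_j i_k}$, which equals $-1$ exactly when both $i_j$ and $i_k$ are odd. Only inversions confined to $O := \bigcup_{e \text{ odd}} S_e$ matter, and these count the inversions of $\sigma|_O = \prod_{e \text{ odd}} \sigma_e$; therefore
$$\sigma X^I \;=\; \left(\prod_{e \text{ odd}} \operatorname{sgn}(\sigma_e)\right) X^I.$$

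For necessity, $\sigma \in \mathfrak{A}_n$ forces $\prod_e \operatorname{sgn}(\sigma_e) = +1$, while $\sigma X^I = -X^I$ forces $\prod_{e \text{ odd}} \operatorname{sgn}(\sigma_e) = -1$; together these give $\prod_{e \text{ even}} \operatorname{sgn}(\sigma_e) = -1$ as well. Hence some odd $e$ must admit an odd $\sigma_e$ (so $|S_e| \geq 2$, yielding a repeated odd exponent), and some even $e$ must as well (yielding a repeated even exponent). For sufficiency, if $i_j = i_k$ is even and $i_r = i_s$ is odd, then $\sigma := (j,k)(r,s) \in \mathfrak{A}_n$ has $\sigma_{i_j} = (j,k)$ and $\sigma_{i_r} = (r,s)$, so the displayed formula yields $\sigma X^I = -X^I$ and thus $\mathcal{O}_{\mathfrak{A}_n}(X^I) = 0$.

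The main technical obstacle is the sign formula in the second paragraph: carefully tracking the $(-1)$-commutation through bubble-sort inversions while exponents repeat requires some bookkeeping, but once the multiplicativity of the sign over the disjoint level sets $S_e$ is established, the rest is a routine parity count.
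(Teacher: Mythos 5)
Your proof is correct, and it takes a genuinely different route from the paper's, even though the underlying mechanism (transposing two variables with equal odd exponents negates the monomial) is necessarily the same. The paper treats the two directions asymmetrically and leans on earlier results: for sufficiency, a repeated even exponent gives an odd permutation stabilizing $X^I$, so $\mathcal{O}_{\mathfrak{A}_n}(X^I)=\mathcal{O}_{\mathfrak{S}_n}(X^I)$ by Corollary \ref{zzcor4.2} (via Lemma \ref{zzlem4.1}), which vanishes by Lemma \ref{zzlem3.5}; for necessity, it extracts an even $\tau$ with $\tau X^I=-X^I$, writes $\tau$ as a product of disjoint cycles, and, assuming no repeated even entry, shows each even-length cycle negates $X^I$ while each odd-length cycle fixes it (re-running the transposition computation from the proof of Lemma \ref{zzlem3.5}), so $\tau X^I=X^I$, a contradiction. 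You instead prove one closed-form sign identity, $\sigma X^I=\operatorname{sgn}(\sigma|_O)X^I$ for any level-set-preserving $\sigma$ (with $O$ the set of positions carrying odd exponents), via inversion counting, and then both directions are uniform parity bookkeeping: $\prod_e\operatorname{sgn}(\sigma_e)=+1$ together with $\prod_{e\ \mathrm{odd}}\operatorname{sgn}(\sigma_e)=-1$ forces a sign $-1$ on both an odd and an even level set, and conversely $(j,k)(r,s)$ realizes the sign $-1$ (note the four indices are automatically distinct since an even exponent cannot equal an odd one). You also make explicit, through the coset-pairing argument, the reduction ``the orbit sum vanishes iff some even $\sigma$ negates $X^I$,'' which the paper invokes only implicitly (``$X^I$ and $-X^I$ are in the $\mathfrak{A}_n$-orbit''). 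What your route buys: the sign identity is self-contained, subsumes Lemma \ref{zzlem3.5} and essentially Lemma \ref{zzlem4.11} as corollaries, and replaces the cycle-decomposition case analysis with a one-line character computation. What the paper's route buys: by reusing already-proved lemmas it keeps each proof short and avoids the bookkeeping of inversions in the presence of repeated exponents. One small nit: your phrase ``decomposes into terms $\pm X^I$ indexed by cosets'' is loose --- the coset contributions are $\pm$ the various monomials in the orbit, not all equal to $\pm X^I$ --- but your actual pairing criterion (cosets of $g$ and $h$ cancel iff $h^{-1}gX^I=-X^I$, and then all cosets pair off) is the right one, so this is a matter of wording, not a gap.
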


\begin{proof} If $I$ has repeated even indices then
$\mathcal{O}_{\mathfrak{A}_n}(X^I) =
\mathcal{O}_{\mathfrak{S}_n}(X^I)$ by Corollary \ref{zzcor4.2}.
Since $I$ has repeated odd indices then
$\mathcal{O}_{\mathfrak{S}_n}(X^I) =0$ by Lemma \ref{zzlem3.5}.

Conversely, suppose that $\mathcal{O}_{\mathfrak{A}_n}(X^I) = 0$,
then $X^I$ and $-X^I$ are in the $\mathfrak{A}_n$-orbit of $X^I$,
hence in the $\mathfrak{S}_n$-orbit of $X^I$. Hence for every $\tau
X^I$ in the $\mathfrak{S}_n$-orbit of $X^I$ we also have $-\tau X^I$
in the $\mathfrak{S}_n$-orbit, and so the $\mathfrak{S}_n$ orbit sum
is $0$, which forces at least two indices to have the same odd value
by Lemma \ref{zzlem3.5}. We have $\tau X^I = -X^I$ for an even
permutation $\tau$.  Write $\tau$ as a product of disjoint cycles
$$\tau = \nu_1 \cdots \nu_{2m} \mu_1 \cdots \mu_k$$
where the $\nu_i$ are odd permutations and the $\mu_j$ are even
permutations.  Note that since $\tau X^I = -X^I$ exponents in $I$
must be constant over the support of each cycle.  Suppose there are
no repeated even indices in $I$, so that all repeats are of odd
indices.  Hence for each $\mu_j = (a_1, \cdots, a_{2s_j+1})$, an
even cycle, $\mu_j$ can be written as an even number of
transpositions, interchanging variables with the same odd exponent.
By the proof of Lemma \ref{zzlem3.5} each of these transpositions
maps $X^I$ to $-X^I$, and hence $\mu_j X^I = X^I$, For similar
reasons each $\nu_i X^I =-X^I$.  It follows that $\tau X^I = \nu_1
\cdots \nu_{2m} \mu_1 \cdots \mu_k X^I = X^I$, a contradiction.
Hence $I$ must also contain at two indices with the same even
number.
\end{proof}

Note that $\mathfrak{A}_n$-orbit sums do not necessarily correspond
to partitions, e.g. when $n=4$ the orbit sums
$\mathcal{O}_{\mathfrak{A}_n}(x_1^4x_2^3x_3^2x_4)$ and
$\mathcal{O}_{\mathfrak{A}_n}(x_1^4x_2^3x_3x_4^2)$ are different
(and \linebreak[4] $\mathcal{O}_{\mathfrak{A}_n}(x_1^4x_2^3x_3^2x_4)
+ \mathcal{O}_{\mathfrak{A}_n}(x_1^4x_2^3x_3 x_4^2) =
\mathcal{O}_{\mathfrak{S}_n}(x_1^4x_2^3x_3^2x_4)$).

Adapting the classical definition, an element $g\in \knx$ is called
{\it symmetric} (respectively, {\it antisymmetric}) if $\tau (g)=g$
(respectively, $\tau (g)=-g$) for every odd permutation $\tau\in
\mathfrak{S}_n$. Note that $g$ is symmetric if and only if $g$ is
$\mathfrak{S}_n$-invariant. If $g$ is antisymmetric, then $g$ is
$\mathfrak{A}_n$-invariant. The following lemma follows easily.

\begin{lemma}
\label{zzlem4.4} Let $f,g,h$ be elements in $\knx$.
\begin{enumerate}
\item
Linear combinations of antisymmetric invariants are antisymmetric.
Hence if $f+g$ and $g$ are antisymmetric invariants, then $f$ is an
antisymmetric invariant.
\item
If $f = gh$ with $g$ an antisymmetric invariant and $h$ a symmetric
invariant, then $f$ is an antisymmetric invariant.
\item
If $f = gh$ with $f$ and $g$ antisymmetric invariants
then $h$ a symmetric invariant.
\end{enumerate}
\end{lemma}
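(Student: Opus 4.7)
The plan is to verify each part by a short direct computation, leaning on three facts: (i) each element of $\mathfrak{S}_n$ acts on $\knx$ as a graded \emph{algebra} automorphism (so it respects products), (ii) $\knx$ is a domain (as a skew polynomial ring), and (iii) any antisymmetric element $g$ is automatically $\mathfrak{A}_n$-invariant, since every even permutation is a product of an even number of transpositions and each transposition sends $g$ to $-g$. With (iii) in hand, the phrase ``antisymmetric invariant'' in the statement is just synonymous with ``antisymmetric element.''

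For part (1), I would simply note that if $\tau$ is any odd permutation and $f_1,f_2$ are antisymmetric, then linearity of $\tau$ gives $\tau(af_1+bf_2) = -af_1 - bf_2$ for any scalars $a,b$. The second assertion then follows by writing $f = (f+g) + (-1)\cdot g$ and invoking the first.

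For part (2), for any odd $\tau\in\mathfrak{S}_n$ I would compute
\[
\tau(gh) \;=\; \tau(g)\,\tau(h) \;=\; (-g)(h) \;=\; -gh,
\]
using that $\tau$ is multiplicative and that $h$ is fixed by all of $\mathfrak{S}_n$; thus $gh$ is antisymmetric.

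Part (3) is the one that actually uses that $\knx$ is a domain. Assuming $g\neq 0$ (otherwise $f=0$ and the claim is vacuous, with $h$ arbitrary), for any odd $\tau$ I would write
\[
-f \;=\; \tau(f) \;=\; \tau(g)\,\tau(h) \;=\; -g\,\tau(h),
\]
so $g\tau(h) = gh$ and left-cancellation in the domain $\knx$ yields $\tau(h)=h$. The same argument with an even $\sigma$ uses that $f$ and $g$ are both $\mathfrak{A}_n$-invariant (by (iii)) to give $g\sigma(h) = \sigma(f) = f = gh$, hence $\sigma(h) = h$. Thus $h$ is fixed by all of $\mathfrak{S}_n$, i.e., symmetric. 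There is no real obstacle here; the only subtle point is remembering to invoke the domain hypothesis to cancel $g$, which is why the argument is noncommutative-safe despite the $(-1)$-skew relations.
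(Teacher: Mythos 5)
Your proof is correct and is precisely the routine verification the paper leaves to the reader (the lemma is introduced with ``The following lemma follows easily'' and no proof is printed): parts (1) and (2) use linearity and multiplicativity of the permutation action, and part (3) cancels $g$ using that $\knx$ is a domain, together with the fact, stated in the paper just before the lemma, that antisymmetric elements are $\mathfrak{A}_n$-invariant. Your explicit treatment of the degenerate case $g=0$ in (3), where the conclusion would fail for arbitrary $h$, is a small but genuine precision that the paper's statement leaves implicit.
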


The following lemma follows as in the case of $\kpx$ and the proof
is omitted.

\begin{lemma}\cite[Theorem 4.1.4]{CA}
\label{zzlem4.5} If $f$ is an $\mathfrak{A}_n$-invariant and
$\sigma$ is the transposition $(1,2)$ then $\sigma f = \tau f$ for
any odd permutation $\tau$. Furthermore $f + \sigma f$ is symmetric
and $f-\sigma f$ is antisymmetric. As a consequence, each invariant
$f \in \knx^{\mathfrak{A}_n}$ can be be written uniquely as the sum
of a symmetric invariant and an antisymmetric invariant.
\end{lemma}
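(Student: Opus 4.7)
The proof is essentially formal and mimics the classical argument for $\kpx^{\mathfrak{A}_n}$; no feature of the $(-1)$-relations actually enters, since we only apply graded automorphisms and add/subtract their values, operating additively in $\knx$.

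The first step is the identity $\sigma f = \tau f$ for odd $\tau$. This is a purely group-theoretic observation. Since $\mathfrak{A}_n$ has index $2$ in $\mathfrak{S}_n$ and both $\sigma$ and $\tau^{-1}$ are odd, the product $\sigma\tau^{-1}$ lies in $\mathfrak{A}_n$. Applying the $\mathfrak{A}_n$-invariance of $f$ to this product gives $\sigma(\tau^{-1}(f))=f$, and since $\sigma^2 = 1$ we obtain $\tau^{-1}(f) = \sigma(f)$; replacing $\tau$ by $\tau^{-1}$ (which is also odd) yields $\tau(f) = \sigma(f)$.

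Next I would check that $\sigma f$ is itself $\mathfrak{A}_n$-invariant: for $\rho \in \mathfrak{A}_n$, $\rho\sigma$ is odd, so by the first step $(\rho\sigma)(f) = \sigma(f)$, i.e.\ $\rho(\sigma f) = \sigma f$. Consequently $f \pm \sigma f$ are both $\mathfrak{A}_n$-invariant. To show $f + \sigma f$ is symmetric, compute $\sigma(f+\sigma f) = \sigma f + f$; every element of $\mathfrak{S}_n$ is either in $\mathfrak{A}_n$ or of the form $\rho\sigma$ with $\rho\in\mathfrak{A}_n$, and both kinds fix $f+\sigma f$. For antisymmetry of $f-\sigma f$: $\sigma(f-\sigma f) = \sigma f - f = -(f-\sigma f)$, and for any other odd $\tau$, applying the first step (with $f-\sigma f$ in place of $f$, which is legal because $f-\sigma f$ is $\mathfrak{A}_n$-invariant) gives $\tau(f-\sigma f) = \sigma(f-\sigma f) = -(f-\sigma f)$.

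For the decomposition, write
\[
f \;=\; \tfrac{1}{2}\bigl(f + \sigma f\bigr) \;+\; \tfrac{1}{2}\bigl(f - \sigma f\bigr),
\]
where the first summand is symmetric and the second antisymmetric. Uniqueness follows from applying $\sigma$ to any decomposition $f = h_1 + h_2$ with $h_1$ symmetric and $h_2$ antisymmetric: one gets $\sigma f = h_1 - h_2$, so $h_1, h_2$ are uniquely determined by $f$ and $\sigma f$. Existence uses $\operatorname{char} k = 0$ to divide by $2$; uniqueness does not.

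There is no real obstacle: the one thing to watch is that no calculation inside $\knx$ requires multiplication of elements, so the skew-commutativity is irrelevant. The only substantive observation is the closure property that $\sigma f$ remains $\mathfrak{A}_n$-invariant, which itself follows from the first bullet.
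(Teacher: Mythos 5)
Your proof is correct and is essentially the paper's own approach: the paper omits the argument, saying only that the lemma ``follows as in the case of $\kpx$,'' and your use of the coset decomposition $\mathfrak{S}_n=\mathfrak{A}_n\cup\mathfrak{A}_n\sigma$ together with $f=\tfrac12(f+\sigma f)+\tfrac12(f-\sigma f)$ is exactly that classical argument, including the correct observation that only the additive structure and the automorphism action are used, so the $(-1)$-skew relations never enter. One small quibble: your parenthetical claim that uniqueness avoids characteristic zero is off, since recovering $h_1=\tfrac12(f+\sigma f)$ from $h_1+h_2=f$ and $h_1-h_2=\sigma f$ still requires dividing by $2$ (and in characteristic $2$ the decomposition genuinely fails to be unique), though this is moot under the paper's standing hypothesis $\operatorname{char}k=0$.
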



\begin{example}\label{zzex4.6}
For $n \geq 3$ the following are examples of antisymmetric
invariants:  some ${\mathfrak{A}_n}$-orbits are antisymmetric (e.g.
$\mathcal{O}_{\mathfrak{A}_n}(x_1^4 x_2 x_3 )$ and
$\mathcal{O}_{\mathfrak{A}_n}(x_1^3 x_2^3 x_3 )$) and antisymmetric
elements can be constructed from the lemma above (e.g.  $f-\sigma f$
for $f=\mathcal{O}_{\mathfrak{A}_n}(x_1^4 x_2^3 x_3^2$ )).
\end{example}

For the rest of this section, we assume that $n\geq 3$ as
$\mathfrak{A}_2$ is trivial. In the case $\knx$ we have the two
antisymmetric orbit sums given in the lemma below; the orbit sums of
these monomials are symmetric polynomials when $\mathfrak{A}_n$ acts
on $\kpx$.

\begin{lemma}
\label{zzlem4.7}
The $\mathfrak{A}_n$ orbit sums
$$\mathcal{O}_{\mathfrak{A}_n}(x_1
x_2 \cdots x_n) = x_1 x_2 \cdots x_n\quad {\text{ and }}\quad
\mathcal{O}_{\mathfrak{A}_n}(x_1 x_2 \cdots x_{n-1} )$$ are both
antisymmetric $\mathfrak{A}_n$-invariants. And
$\mathcal{O}_{\mathfrak{A}_n}(x_1 x_2 \cdots x_{n-1} )$ is the
smallest degree antisymmetric invariant.
\end{lemma}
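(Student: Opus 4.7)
The plan is to handle the three assertions separately: antisymmetry of the two orbit sums, and then the minimality of the second one.

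The first claim, that $\mathcal{O}_{\mathfrak{A}_n}(x_1 \cdots x_n) = x_1 \cdots x_n$ and is antisymmetric, is immediate from \eqref{E0.0.1}: for any $\sigma \in \mathfrak{S}_n$, reordering the factors of $x_{\sigma(1)} \cdots x_{\sigma(n)}$ using the anticommutation relations gives $\operatorname{sgn}(\sigma) \, x_1 \cdots x_n$. Thus the exact $\mathfrak{S}_n$-stabilizer equals $\mathfrak{A}_n$, the $\mathfrak{A}_n$-orbit is a singleton, and both the stated identity and antisymmetry drop out.

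For $f := \mathcal{O}_{\mathfrak{A}_n}(x_1 x_2 \cdots x_{n-1})$ my plan is to compare the $\mathfrak{A}_n$- and $\mathfrak{S}_n$-orbits. A direct stabilizer computation (using the anticommutation for signs, as above) shows that the exact $\mathfrak{S}_n$-stabilizer of $x_1 \cdots x_{n-1}$ consists of permutations fixing the index $n$ whose restriction to $\{1,\ldots,n-1\}$ is even; this is a copy of $\mathfrak{A}_{n-1}$, which already sits inside $\mathfrak{A}_n$. An index count then gives $|\mathfrak{S}_n\text{-orbit}| = 2\,|\mathfrak{A}_n\text{-orbit}|$, so for any odd $\sigma \in \mathfrak{S}_n$ the $\mathfrak{S}_n$-orbit splits into the disjoint $\mathfrak{A}_n$-orbits of $x_1 \cdots x_{n-1}$ and $\sigma(x_1 \cdots x_{n-1})$, yielding
\[
\mathcal{O}_{\mathfrak{S}_n}(x_1 \cdots x_{n-1}) \;=\; f + \sigma f.
\]
By Lemma \ref{zzlem3.5} the left side vanishes (all exponents equal the repeated odd value $1$), so $\sigma f = -f$. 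Taking $\sigma = (1,2)$ and applying Lemma \ref{zzlem4.5} extends this to $\tau f = -f$ for every odd $\tau$, and Lemma \ref{zzlem4.3} certifies $f \neq 0$ since the partition $(1,\ldots,1,0)$ has no repeated even parts.

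For minimality, I would argue that any nonzero antisymmetric $g \in \knx^{\mathfrak{A}_n}$ of degree $d < n-1$ must vanish. Writing $g$ as a linear combination of $\mathfrak{A}_n$-orbit sums $\mathcal{O}_{\mathfrak{A}_n}(X^I)$ with $|I| = d$ (the $\mathfrak{A}_n$-analogue of Lemma \ref{zzlem3.4}), each such $X^I$ has at least two indices with exponent zero, say $a$ and $b$; the transposition $(a,b)$ is odd and exactly stabilizes $X^I$, so Lemma \ref{zzlem4.1} forces $\mathcal{O}_{\mathfrak{A}_n}(X^I)$ to be $\mathfrak{S}_n$-symmetric. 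Then $g$ is simultaneously symmetric and antisymmetric, forcing $g = 0$ in characteristic zero. Since $f$ has degree $n-1$ and is a nonzero antisymmetric invariant, it is minimal. The point I expect to require the most care is the orbit-splitting identity in the second paragraph: both the stabilizer computation (which requires tracking signs, not just set-theoretic fixing) and the check that the two $\mathfrak{A}_n$-orbits are disjoint and together exhaust the $\mathfrak{S}_n$-orbit need to be done precisely; once that is in place, the rest of the argument is routine application of the lemmas already established in this section.
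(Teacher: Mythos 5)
Your proposal is correct, and for the central claim it takes a genuinely different route from the paper's proof. The first part (the sign-sensitive stabilizer of $x_1\cdots x_n$ is exactly $\mathfrak{A}_n$, so the orbit is a singleton and odd permutations negate it) and the minimality argument (degree $<n-1$ forces at least two zero exponents, so every $\mathfrak{A}_n$-orbit sum in that degree is $\mathfrak{S}_n$-symmetric, and a simultaneously symmetric and antisymmetric element vanishes in characteristic zero) coincide with the paper's; note that Lemma \ref{zzlem3.4} is already stated for arbitrary subgroups $G\subset\mathfrak{S}_n$, so no ``$\mathfrak{A}_n$-analogue'' is needed, and Corollary \ref{zzcor4.2} states the two-zero-exponents fact you derive from Lemma \ref{zzlem4.1}. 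Where you diverge is the antisymmetry of $f=\mathcal{O}_{\mathfrak{A}_n}(x_1\cdots x_{n-1})$: the paper computes the orbit explicitly, showing the monomial missing $x_j$ occurs with sign $(-1)^{n-j}$, so that $\mathcal{O}_{\mathfrak{A}_n}(x_1\cdots x_{n-1})=\sum_{j=1}^n(-1)^{n-j}x_1\cdots\widehat{x_j}\cdots x_n$, and then checks directly that each transposition reverses the sign of this explicit sum. You instead compute the exact stabilizer ($\cong\mathfrak{A}_{n-1}\subset\mathfrak{A}_n$) and use the coset decomposition $\mathfrak{S}_n=\mathfrak{A}_n\sqcup\sigma\mathfrak{A}_n$ to get $0=\mathcal{O}_{\mathfrak{S}_n}(x_1\cdots x_{n-1})=f+\sigma f$, the vanishing coming from Lemma \ref{zzlem3.5} since $(1,\ldots,1,0)$ has repeated odd parts for $n\geq 3$; then $\sigma f=-f$, extended to all odd permutations by Lemma \ref{zzlem4.5}, with $f\neq 0$ certified by Lemma \ref{zzlem4.3}. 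This is valid, and the disjointness/exhaustion point you flag as the delicate step is in fact automatic: once the full $\mathfrak{S}_n$-stabilizer is known to lie in $\mathfrak{A}_n$, dividing the two coset sums by the common stabilizer order yields $\mathcal{O}_{\mathfrak{S}_n}(X^I)=f+\sigma f$ with no orbit bookkeeping at all. What your structural argument buys is brevity and the elimination of the sign chase; what the paper's computation buys is the explicit expansion of $\mathcal{O}_{\mathfrak{A}_n}(x_1\cdots x_{n-1})$, which is reused in the proofs of Lemmas \ref{zzlem4.8} and \ref{zzlem4.9}, so the extra work there is not wasted.
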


\begin{proof} It is easy to show that $x_1 x_2 \cdots x_n$ is an
antisymmetric $\mathfrak{A}_n$-invariant, whence
$\mathcal{O}_{\mathfrak{A}_n}(x_1 x_2 \cdots x_n) = x_1 x_2 \cdots
x_n$. So we focus on $\mathcal{O}_{\mathfrak{A}_n}(x_1 x_2 \cdots
x_{n-1} )$.


We note that
$$\mathcal{O}_{\mathfrak{A}_3}(x_1x_2) = x_1 x_2 - x_1x_3 + x_2 x_3.$$
For $n \geq 4$  applying the even permutation $(1,2) (n-1,n)$ to
$x_1x_2 \cdots x_{n-1}$ we obtain
$$(1,2) (n-1,n) (x_1x_2 \cdots x_{n-1}) =
x_2x_1 \cdots x_{n-2}x_n = - x_1x_2 \cdots x_{n-2}x_n$$ and similarly
$$(1,2) (n-2,n) (x_1x_2 \cdots x_{n-1}) =
x_2x_1 \cdots x_{n-3} x_{n}x_{n-1} =  x_1x_2 \cdots x_{n-3}x_{n-1}x_n$$
and
\begin{eqnarray*}
(1,2) (j,n) (x_1x_2 \cdots x_{n-1}) &=& x_2x_1
\cdots x_{j-1} x_{n}x_{j+1} \cdots x_n \\
& =& (-1)^{n-j} x_1x_2 \cdots x_{j-1}x_{j+1} \cdots x_n
\end{eqnarray*}
so that the $n$ monomials with $j$th missing variable  occur in the
$\mathfrak{A}_n$-orbit with the sign $(-1)^{n-j}$. Since $x_1 \cdots
x_{n-1}$ has repeated odd exponents we have seen that the monomials
in the $\mathfrak{S}_n$-orbit of $x_1 \cdots x_{n-1}$ occur with
both plus and minus signs, and the $\mathfrak{S}_n$-orbit sum of
$x_1x_2 \cdots x_{n-1}$ is $0$.  Hence the $\mathfrak{S}_n$-orbit of
$x_1 \cdots x_{n-1}$ has $2n$ elements, and so the
$\mathfrak{S}_n$-stabilizer of $x_1 \cdots x_{n-1}$ has $(n-1)!/2$
elements, and clearly the $(n-1)!/2$ even permutations of $\{1,
\ldots, n-1\}$ stabilize $x_1 \cdots x_{n-1}$ so must constitute its
stabilizer.  Hence the stabilizer in $\mathfrak{A}_n$ must also have
$(n-1)!/2$ elements, and hence the $\mathfrak{A}_n$-orbit of $x_1
\cdots x_{n-1}$ must be the $n$ elements we have computed, and hence
$$\mathcal{O}_{\mathfrak{A}_n}(x_1 \cdots x_{n-1}) = (x_1 \cdots
x_{n-1}) - (x_1 \cdots x_{n-2} x_n) + (x_1 \cdots x_{n-3} x_{n-1} x_n)$$
 $$ + \cdots + ((-1)^{n-1}x_2x_3 \cdots x_n). $$
Then to see the effect of any transposition $(i,j)$ on this orbit
sum, consider a summand of the orbit sum that contains both $i$ and
$j$ and note, as in the argument above, that the transposition
$(i,j)$ changes the sign of this term; since any element in an orbit
represents the orbit, any transposition reverses the sign on the
$\mathfrak{A}_n$-orbit sum of $x_1 \cdots x_{n-1}$, and hence
$\mathcal{O}_{\mathfrak{A}_n}(x_1 x_2 \cdots x_{n-1} )$ is an
antisymmetric $\mathfrak{A}_n$-invariant.

There can be no smaller degree antisymmetric
$\mathfrak{A}_n$-invariant since any smaller degree monomial $X^I$
must have at least two zero entries in $I$, hence $\mathcal{O}(X^I)$
must be $\mathfrak{S}_n$-symmetric, and so no linear combination of
such orbits can be antisymmetric.
\end{proof}

The antisymmetric orbit sum $\mathcal{O}_{\mathfrak{A}_n}(x_1 \cdots
x_n)$ can be generated from the super-symmetric polynomials and
$\mathcal{O}_{\mathfrak{A}_n}(x_1 \cdots x_{n-1})$.

\begin{lemma} \label{zzlem4.8}
The antisymmetric orbit sum $\mathcal{O}_{\mathfrak{A}_n} (x_1
\cdots x_n)=x_1 \cdots x_n$ is generated by the super-symmetric
polynomial $P_1=S_1=\mathcal{O}_{\mathfrak{S}_n}
(x_1)=\mathcal{O}_{\mathfrak{A}_n} (x_1)$ and the antisymmetric
orbit sum $\mathcal{O}_{\mathfrak{A}_n} (x_1 \cdots x_{n-1})$ as
follows
$$\mathcal{O}_{\mathfrak{A}_n}(x_1 \cdots x_n) =\frac{1}{2n}
(\mathcal{O}_{\mathfrak{A}_n}(x_1 \cdots x_{n-1}) S_1 + (-1)^{n-1}
S_1\mathcal{O}_{\mathfrak{A}_n} (x_1 \cdots x_{n-1}) ).$$
\end{lemma}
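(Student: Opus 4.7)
The plan is to compute both products on the right-hand side explicitly using the formula for $\mathcal{O}_{\mathfrak{A}_n}(x_1\cdots x_{n-1})$ obtained at the end of the proof of Lemma \ref{zzlem4.7}. Writing $M_j := x_1 x_2 \cdots \widehat{x_j} \cdots x_n$ (product in increasing order with $x_j$ omitted), we have $\mathcal{O}_{\mathfrak{A}_n}(x_1\cdots x_{n-1}) = \sum_{j=1}^n (-1)^{n-j} M_j$, and $S_1 = \sum_{k=1}^n x_k$. The two products then expand into double sums $\sum_{j,k}(-1)^{n-j} M_j x_k$ and $\sum_{j,k}(-1)^{n-j} x_k M_j$ respectively.

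First, I would split the double sums into the \emph{diagonal} part $k=j$ and the \emph{off-diagonal} part $k\neq j$. When $k=j$, the monomial $M_j x_j$ collapses to a constant multiple of $x_1 x_2 \cdots x_n$: moving the trailing $x_j$ into its natural slot requires $n-j$ transpositions in the $(-1)$-skew ring, so $M_j x_j = (-1)^{n-j} x_1\cdots x_n$. Similarly $x_j M_j = (-1)^{j-1} x_1 \cdots x_n$. Hence the diagonal part of the first product contributes $\sum_j (-1)^{n-j}(-1)^{n-j} x_1\cdots x_n = n\,x_1\cdots x_n$, and the diagonal part of the second product contributes $\sum_j (-1)^{n-j}(-1)^{j-1} x_1\cdots x_n = n(-1)^{n-1}\,x_1\cdots x_n$. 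After multiplying the second product by $(-1)^{n-1}$ and adding, the diagonal contributions sum to $2n\,x_1\cdots x_n = 2n\,\mathcal{O}_{\mathfrak{A}_n}(x_1\cdots x_n)$, which matches the claim after dividing by $2n$.

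The main obstacle is to show that the off-diagonal contributions cancel. For $k\neq j$, $M_j x_k$ and $x_k M_j$ each produce (up to sign) a fixed monomial $N_{il} := x_1\cdots x_i^2 \cdots \widehat{x_l}\cdots x_n$, and the pair $(i,l)$ is forced to be $(k,j)$, so $N_{il}$ receives contributions from exactly one $(j,k)$ in each double sum. I would carefully tabulate the sign incurred by moving the extra $x_k$ past the intervening variables in each of the four subcases ($k<j$ vs.\ $k>j$ in each product), then combine with the external factors $(-1)^{n-j}$ and $(-1)^{n-1}$. A direct parity check will show that for each fixed $(i,l)$ with $i\neq l$, the coefficient of $N_{il}$ in $\mathcal{O}_{\mathfrak{A}_n}(x_1\cdots x_{n-1})S_1 + (-1)^{n-1} S_1\mathcal{O}_{\mathfrak{A}_n}(x_1\cdots x_{n-1})$ is zero.

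Combining the two steps proves the identity. An alternative (and possibly cleaner) route is to observe by Lemma \ref{zzlem4.4}(2) that the left-hand side is automatically antisymmetric: $\mathcal{O}_{\mathfrak{A}_n}(x_1\cdots x_{n-1})$ is antisymmetric by Lemma \ref{zzlem4.7}, while $(\,\cdot\,)S_1 + (-1)^{n-1} S_1(\,\cdot\,)$ is the natural symmetric commutator/anticommutator operation (depending on the parity of $n$) with the symmetric element $S_1$, and this construction preserves antisymmetry. Then one only needs to compare leading coefficients in the length-lexicographic order to identify the result with $\mathcal{O}_{\mathfrak{A}_n}(x_1\cdots x_n)$, reducing the combinatorial bookkeeping to a single diagonal calculation.
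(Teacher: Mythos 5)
Your main argument is correct and is essentially the paper's own proof: the paper likewise expands $\mathcal{O}_{\mathfrak{A}_n}(x_1\cdots x_{n-1})\,S_1$ and $S_1\,\mathcal{O}_{\mathfrak{A}_n}(x_1\cdots x_{n-1})$ using $\mathcal{O}_{\mathfrak{A}_n}(x_1\cdots x_{n-1})=\sum_j(-1)^{n-j}M_j$, and your diagonal computation (each product contributing $n\,x_1\cdots x_n$ after the sign adjustment) matches the paper's count exactly. The only difference is in how the off-diagonal terms are dispatched: you cancel them monomial-by-monomial via the four-case sign check (which does work out -- for each $N_{kj}$ the two coefficients are $(-1)^{j+k+1}$ and $(-1)^{j+k}$ in some order, in both the $k<j$ and $k>j$ cases), whereas the paper avoids this bookkeeping by an invariance-plus-counting argument: since each product is an $\mathfrak{A}_n$-invariant, and $n^2=n+n(n-1)$, the cross terms must constitute exactly $(-1)^{n-2}\mathcal{O}_{\mathfrak{A}_n}(x_1^2x_2\cdots x_{n-1})$ in the first product and $+\mathcal{O}_{\mathfrak{A}_n}(x_1^2x_2\cdots x_{n-1})$ in the second, and these two orbit sums cancel in the stated combination. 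One caution about your ``possibly cleaner'' alternative: antisymmetry plus a leading-term comparison does \emph{not} suffice, because the antisymmetric invariants of degree $n$ form a $2$-dimensional space, spanned by $x_1\cdots x_n$ and the antisymmetrization of $\mathcal{O}_{\mathfrak{A}_n}(x_1^2x_2\cdots x_{n-1})$ (the partition $(2,1,\ldots,1,0)$ has no repeated even part, so by Lemma \ref{zzlem4.11} its antisymmetrization is nonzero); ruling out that second component is precisely the cross-term cancellation you were hoping to avoid, so the alternative route saves nothing. Your primary route stands as written.
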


\begin{proof} Computing
$$\begin{aligned}
\mathcal{O}_{\mathfrak{A}_n}&(x_1 \cdots x_{n-1})
\mathcal{O}_{\mathfrak{A}_n}(x_1)\\
&= (x_1x_2 \cdots x_{n-1} -  x_1x_2 \cdots x_{n-2}x_n + \cdots +
(-1)^{n-1}x_2 \cdots x_n) (x_1 + \cdots + x_n)\end{aligned}$$ we see
that the monomial $x_1 \cdots x_n$ occurs $n$ times (each with
positive sign) as a summand in this product when expanded, and
$$x_1 \cdots x_{n-1}x_1 = (-1)^{n-2}x_1^2 \cdots x_{n-1}$$
so the respective orbits sums occur in the expanded product.
Since there are $n^2$ monomials in the product
$\mathcal{O}_{\mathfrak{A}_n}(x_1 \cdots x_{n-1})
\mathcal{O}_{\mathfrak{A}_n}(x_1)  $,
and $n(n-1)$ summands in $\mathcal{O}_{\mathfrak{A}_n}
(x_1^2x_2 \cdots x_{n-1})$ these orbit sums account for
all the terms, and so
$$\mathcal{O}_{\mathfrak{A}_n}(x_1 \cdots x_{n-1})
\mathcal{O}_{\mathfrak{A}_n}(x_1) = n\mathcal{O}_{\mathfrak{A}_n}
(x_1 \cdots x_n) + (-1)^{n-2}\mathcal{O}_{\mathfrak{A}_n}
(x_1^2x_2 \cdots x_{n-1}).$$
Similarly
$$ \mathcal{O}_{\mathfrak{A}_n}(x_1)\mathcal{O}_{\mathfrak{A}_n}
(x_1 \cdots x_{n-1}) =\mathcal{O}_{\mathfrak{A}_n}(x_1^2x_2
\cdots x_{n-1}) + (-1)^{n-1}n\mathcal{O}_{\mathfrak{A}_n}
(x_1 \cdots x_n),  $$
and the result follows.
\end{proof}

Next we note that the super-symmetric polynomial
$S_{n} = \mathcal{O}_{\mathfrak{S}_n}(x_1^2 \cdots x_{n-1}^2x_n)$
can be generated by antisymmetric invariants
$\mathcal{O}_{\mathfrak{A}_n}(x_1 \cdots x_{n})$ and
$\mathcal{O}_{\mathfrak{A}_n}(x_1 \cdots x_{n-1})$.

\begin{lemma} \label{zzlem4.9}
The super-symmetric polynomial $S_n = \mathcal{O}_{\mathfrak{S}_n}
(x_1^2 \cdots x_{n-1}^2x_n)$ can be generated by antisymmetric
invariants $\mathcal{O}_{\mathfrak{A}_n}(x_1 \cdots x_{n})$ and
$\mathcal{O}_{\mathfrak{A}_n}(x_1 \cdots x_{n-1})$ as follows
$$\mathcal{O}_{\mathfrak{S}_n}
(x_1^2 \cdots x_{n-1}^2x_n)  = (-1)^{(n-2)(n-1)/2}
(\mathcal{O}_{\mathfrak{A}_n}(x_1 \cdots x_{n-1}))
(\mathcal{O}_{\mathfrak{A}_n}(x_1 \cdots x_{n})).$$
\end{lemma}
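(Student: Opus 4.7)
The plan is to expand both sides into monomials and match them term by term. Two ingredients from earlier in the paper do most of the work: the explicit formula
$$\mathcal{O}_{\mathfrak{A}_n}(x_1\cdots x_{n-1})=\sum_{j=1}^{n}(-1)^{n-j}\,x_1\cdots\widehat{x_j}\cdots x_n$$
extracted from the proof of Lemma~\ref{zzlem4.7}, together with the identity $\mathcal{O}_{\mathfrak{A}_n}(x_1\cdots x_n)=x_1 x_2\cdots x_n$ from the same lemma. Write $Q_j:=x_1\cdots \widehat{x_j}\cdots x_n$ and $M_j:=x_j\prod_{i\ne j}x_i^{2}$. Because each $x_i^2$ is central in $\knx$, the element $M_j$ is well-defined independent of the order of the square factors, and
$$\mathcal{O}_{\mathfrak{S}_n}(x_1^{2}\cdots x_{n-1}^{2}x_n)=\sum_{j=1}^{n}M_j,$$
since an orbit sum of a monomial with one odd exponent and the rest even has exactly $n$ summands, one for each choice of the ``odd slot''.

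The first computation is to evaluate $Q_j\cdot(x_1\cdots x_n)$ in $\knx$. I would move the single $x_j$ that occurs in the right factor past the variables $x_{j+1},\ldots ,x_n$ to the far right, picking up a sign $(-1)^{n-j}$, giving
$$Q_j\cdot (x_1\cdots x_n)=(-1)^{n-j}\,Q_j^{\,2}\cdot x_j.$$
The second computation is a clean formula for $Q_j^2$: using $x_ax_b=-x_bx_a$ repeatedly, one checks by induction on $m$ that $(x_{i_1}\cdots x_{i_m})^2=(-1)^{m(m-1)/2}x_{i_1}^{2}\cdots x_{i_m}^{2}$ for any distinct indices. Applied with $m=n-1$, this gives
$$Q_j^{\,2}=(-1)^{(n-1)(n-2)/2}\prod_{i\ne j}x_i^{2},$$
so $Q_j\cdot(x_1\cdots x_n)=(-1)^{n-j}(-1)^{(n-1)(n-2)/2}M_j$.

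Summing over $j$ with the signs $(-1)^{n-j}$ from the orbit-sum expansion collapses the two sign factors $(-1)^{n-j}$ into $+1$, yielding
$$\mathcal{O}_{\mathfrak{A}_n}(x_1\cdots x_{n-1})\cdot \mathcal{O}_{\mathfrak{A}_n}(x_1\cdots x_n)=(-1)^{(n-1)(n-2)/2}\sum_{j=1}^{n}M_j=(-1)^{(n-1)(n-2)/2}\,S_n.$$
Multiplying both sides by $(-1)^{(n-2)(n-1)/2}$ gives the claim.

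The only substantive obstacle is bookkeeping the signs cleanly: the factor $(-1)^{n-j}$ from sliding $x_j$ to the right and the global factor $(-1)^{(n-1)(n-2)/2}$ from squaring a product of $n-1$ distinct variables must be separated so that the former cancels against the coefficient in the expansion of $\mathcal{O}_{\mathfrak{A}_n}(x_1\cdots x_{n-1})$ while the latter is the one that survives. Everything else is direct manipulation using the centrality of $x_i^{2}$ in $\knx$.
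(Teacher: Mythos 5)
Your proof is correct, and it reaches the identity by a somewhat different mechanism than the paper. Both arguments begin the same way: expand $\mathcal{O}_{\mathfrak{A}_n}(x_1\cdots x_{n-1})=\sum_{j=1}^{n}(-1)^{n-j}\,x_1\cdots\widehat{x_j}\cdots x_n$ (from Lemma \ref{zzlem4.7}) and multiply on the right by $\mathcal{O}_{\mathfrak{A}_n}(x_1\cdots x_n)=x_1\cdots x_n$. The paper, however, computes only the single cross term $(x_1\cdots x_{n-1})(x_1\cdots x_n)=(-1)^{(n-2)(n-1)/2}\,x_1^2\cdots x_{n-1}^2x_n$ by an inline induction, and then finishes indirectly: since the product is an $\mathfrak{A}_n$-invariant, the entire orbit sum of that monomial must occur in the expansion, and a count of summands ($n$ monomials in the expanded product against the $n$ terms of $S_n$) shows nothing else can appear. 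You instead evaluate all $n$ cross terms explicitly, separating the two sign mechanisms: the factor $(-1)^{n-j}$ from sliding $x_j$ to the far right, which cancels against the coefficient $(-1)^{n-j}$ in the orbit-sum expansion, and the global factor from the identity $(x_{i_1}\cdots x_{i_m})^2=(-1)^{m(m-1)/2}x_{i_1}^2\cdots x_{i_m}^2$ with $m=n-1$. What your route buys is complete transparency: every coefficient is verified directly, so you never need the invariance-plus-counting step (nor its implicit premise that the $n$ monomials arising in the expansion are pairwise distinct), and the squaring identity you isolate is reusable --- the paper in effect reproves it inline both here and again when establishing \eqref{E3.12.1}. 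One small caution: your parenthetical claim that an orbit sum of a monomial with one odd exponent and the rest even has exactly $n$ summands holds here only because all the even exponents are equal (to $2$); as a general statement it fails (e.g.\ for $x_1^4x_2^2x_3$), though in the one place you use it --- for $S_n$ itself --- it is correct, and the centrality of the $x_i^2$ indeed guarantees each of those $n$ summands carries coefficient $+1$.
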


\begin{proof} The monomial $x_1^2 \cdots x_{n-1}^2x_n$ is
stabilized by $(1,2)$ so
\[ S_{n} = \mathcal{O}_{\mathfrak{S}_n}(x_1^2 \cdots x_{n-1}^2x_n)
= \mathcal{O}_{\mathfrak{A}_n}(x_1^2 \cdots x_{n-1}^2x_n), \]
 and
$$S_{n} = \sum_{i=1}^n x_1^2 \cdots x_{i-1}^2 x_i x_{i+1}^2 \cdots x_n^2.$$
This expression is a sum of $n$ terms, each with $x_1 \cdots x_n$ as a factor.
Consider the product $ (\mathcal{O}_{\mathfrak{A}_n}
(x_1 \cdots x_{n-1})) (x_1 \cdots x_{n})$, and observe
when this product is expanded one term is
\begin{eqnarray*} (x_1 \cdots x_{n-1})(x_1 \cdots x_{n})& =
& (-1)^{n-2}(x_1^2 x_2 \cdots x_{n-1})(x_2 \cdots x_n) \\
& = & (-1)^{n-2} (-1)^{n-3} (x_1^2 x_2^2 \cdots x_{n-1})(x_3 \cdots x_n) \\
& = &(-1)^{(n-2)(n-1)/2}(x_1^2 \cdots x_{n-1}^2x_n),
\end{eqnarray*}
the last equality holding by induction. Since
$(\mathcal{O}_{\mathfrak{A}_n}(x_1 \cdots x_{n-1})) (x_1 \cdots
x_{n})$ is an invariant, the entire orbit sum of this monomial must
occur as terms in this expanded product, accounting for the $n$
terms in $S_{n}$  yielding the result.
\end{proof}

Here we are ready to prove a result of Cameron Atkins \cite{CA}.

\begin{theorem}\cite[Theorem 4.2.7]{CA}
\label{zzthm4.10}
The  fixed subring $\knx^{\mathfrak{A}_n}$ is generated by the
super-symmetric polynomials $S_1, \cdots, S_{n-1}$ and the antisymmetric
$\mathfrak{A}_n$-invariant $\mathcal{O}_{\mathfrak{A}_n}(x_1 \cdots
x_{n-1})$ (or the odd power sums $P_1, P_3, \ldots , P_{2n-3}$ and
$\mathcal{O}_{\mathfrak{A}_n}(x_1 \cdots x_{n-1})$).
\end{theorem}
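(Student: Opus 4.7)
The plan is to use Lemma \ref{zzlem4.5} to decompose any $f \in \knx^{\mathfrak{A}_n}$ as $f = f_s + f_a$ with $f_s$ symmetric and $f_a$ antisymmetric, and then to show each part lies in $\mathcal{B} := \langle S_1, \ldots, S_{n-1}, V\rangle$, where $V := \mathcal{O}_{\mathfrak{A}_n}(x_1 \cdots x_{n-1})$. The alternative statement with odd power sums will follow by observing that $\{S_1,\ldots,S_{n-1}\}$ and $\{P_1,P_3,\ldots,P_{2n-3}\}$ generate the same subalgebra of $\knx^{\mathfrak{S}_n}$ via Newton-type identities together with Lemma \ref{zzlem3.8}.

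For $f_s$, Theorem \ref{zzthm3.10} writes $f_s$ as a polynomial in $S_1, \ldots, S_n$. Combining Lemmas \ref{zzlem4.8} and \ref{zzlem4.9} yields
\[
S_n \;=\; \frac{(-1)^{(n-2)(n-1)/2}}{2n}\, V\bigl(V S_1 + (-1)^{n-1} S_1 V\bigr)\;\in\;\mathcal{B},
\]
so $f_s \in \mathcal{B}$.

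For $f_a$, I would induct on the length-lexicographic leading monomial. By Lemma \ref{zzlem4.7}, no nonzero antisymmetric invariant has degree below $n-1$, and the degree-$(n-1)$ antisymmetric space is spanned by $V \in \mathcal{B}$; this gives the base case. For the inductive step, let $X^I$ be the leading monomial of $f_a$ with partition $I=(i_1,\ldots,i_n)$. In the generic case $i_{n-1} > i_n$ with $I':=(i_1-1,\ldots,i_{n-1}-1,i_n)$ having no repeated odd parts, the element $g := V \cdot \mathcal{O}_{\mathfrak{S}_n}(X^{I'})$ lies in $\mathcal{B}$ by the symmetric case above, and a calculation analogous to Lemma \ref{zzlem3.6} shows it has leading monomial $\pm X^I$; subtracting the appropriate scalar multiple of $g$ from $f_a$ strictly lowers the leading monomial and induction applies.

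The main obstacle is the degenerate cases where $i_{n-1} = i_n$, or where $I'$ acquires a repeated odd entry, so that the direct product $V\cdot\mathcal{O}_{\mathfrak{S}_n}(X^{I'})$ vanishes or has the wrong leading monomial. The remedy, modeled on the computation inside Lemma \ref{zzlem4.8} where the dominant monomials in $V S_1$ and $S_1 V$ cancel in the combination $V S_1 + (-1)^{n-1} S_1 V$ to expose the smaller monomial $x_1\cdots x_n$, is to use weighted linear combinations of $V h$ and $h' V$, or more generally bimodule products $h_0 V h_1 \cdots V h_k$ inside $\mathcal{B}$, whose top-order monomials cancel to leave $X^I$ as the residual leading term. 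Verifying that every possible leading monomial of a nonzero antisymmetric invariant arises from such a combination is the crux of the argument.
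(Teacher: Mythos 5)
Your skeleton is the paper's: decompose $f$ via Lemma \ref{zzlem4.5}, handle the symmetric part by Theorem \ref{zzthm3.10} plus Lemmas \ref{zzlem4.8} and \ref{zzlem4.9} (your closed formula for $S_n$ is exactly the composition of those two lemmas), recover the power-sum version from the argument of Lemma \ref{zzlem3.9}, and reduce the antisymmetric part inductively by multiplying $V:=\mathcal{O}_{\mathfrak{A}_n}(x_1\cdots x_{n-1})$ by a symmetric element to peel off the leading monomial. But your proposal stops exactly where you say it does, and that admitted ``crux'' is a genuine gap: you never establish that the degenerate leading monomials (where $\mathcal{O}_{\mathfrak{S}_n}(X^{I'})$ vanishes or $i_{n-1}=i_n$) are reachable by your proposed combinations $h_0Vh_1\cdots Vh_k$, so the induction is incomplete.

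The paper closes this gap not with cancellation schemes but with preliminary reductions that make the degenerate cases disappear. First, by Lemma \ref{zzlem3.4} and antisymmetry one may assume $f=\mathcal{O}_{\mathfrak{A}_n}(X^I)-\sigma\mathcal{O}_{\mathfrak{A}_n}(X^I)$ for a single partition $I=(\lambda_i)$ and a transposition $\sigma$. Second, since $x_1^2\cdots x_n^2$ is central and symmetric it can be factored out until $\lambda_n\in\{0,1\}$; if $\lambda_n=1$ one factors out $x_1\cdots x_n$ and writes $f=h(x_1\cdots x_n)$ with $h$ symmetric, finishing that case outright, so one may take $\lambda_n=0$; and if also $\lambda_{n-1}=0$ then $(n-1,n)$ stabilizes $X^I$, so $\mathcal{O}_{\mathfrak{A}_n}(X^I)$ is $\mathfrak{S}_n$-invariant (Lemma \ref{zzlem4.1}) and $f=0$ --- this disposes of your case $i_{n-1}=i_n$. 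Third, and this is the device you are missing: instead of $V\cdot\mathcal{O}_{\mathfrak{S}_n}(X^{I^*})$, the paper uses $g=hV$ with $h=\mathcal{O}_{\mathfrak{A}_n}(X^{I^*})+\sigma\mathcal{O}_{\mathfrak{A}_n}(X^{I^*})$ for $I^*=(\lambda_1-1,\ldots,\lambda_{n-1}-1,0)$, and the vanishing you fear cannot occur: since $X^I$ leads a nonzero antisymmetric invariant, Lemma \ref{zzlem4.11} forces $I$ to have no repeated \emph{even} parts, and subtracting $1$ flips parity in the first $n-1$ slots, so $I^*$ has no repeated \emph{odd} parts; hence $h\neq 0$ (it is either $\mathcal{O}_{\mathfrak{S}_n}(X^{I^*})$ or twice an $\mathfrak{S}_n$-invariant $\mathfrak{A}_n$-orbit sum, nonzero by Lemmas \ref{zzlem3.5} and \ref{zzlem4.3}), $\pm f$ is a summand of $g$, and all other terms of $g$ have lower order. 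So the open crux of your argument is resolved by the structural fact in Lemma \ref{zzlem4.11} rather than by weighted bimodule products; as written, your proof is incomplete at precisely that point.
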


\begin{proof}
By Lemma~\ref{zzlem4.8} $S_1=P_1$ and $\mathcal{O}(x_1 \cdots
x_{n-1})$ generate $x_1x_2 \cdots x_n$, which in turn by
Lemma~\ref{zzlem4.9} generate $S_n$.  By Theorem~\ref{zzthm3.10}
$S_1, S_2, \ldots , S_n$ generate all the symmetric invariants.
Hence it suffices to show that any antisymmetric
$\mathfrak{A}_n$-invariant $f$ can be obtained.

We will induct on the degree of $f$, noting that the result is true
in degrees $\leq n-1$ since $\mathcal{O}(x_1 \cdots x_{n-1})$ is the
only antisymmetric $\mathfrak{A}_n$-invariant of degree $\leq n-1$.

Let $X^I$ be the leading term of $f$ under the length-lexicographic
order.  If $\sigma$ is an transposition $\sigma f = -f$ also has
leading term $X^I$.  Hence by applying transpositions, we may assume
that $f$ has leading term $X^I$ where $I$ is weakly decreasing (and
hence corresponds to a partition). We can write $f$ as a linear
combination of distinct orbit sums $f= \sum c_I
\mathcal{O}_{\mathfrak{A}_n}(X^I)$ where $X^I$ is the highest degree
monomial in the orbit, and where $c_I\in k$ [Lemma \ref{zzlem3.4}].
Since $f$ is antisymmetric $\sigma f = \sum c_I \sigma
\mathcal{O}_{\mathfrak{A}_n}(X^I) = -f$ so that $2f = f -(-f) = \sum
c_I (\mathcal{O}_{\mathfrak{A}_n}(X^I) - \sigma
\mathcal{O}_{\mathfrak{A}_n}(X^I))$. Hence without loss of
generality we may assume that $f = \mathcal{O}_{\mathfrak{A}_n}(X^I)
- \sigma(\mathcal{O}_{\mathfrak{A}_n}(X^I))$, with $X^I$ the leading
term of $f$, and with $I = (\lambda_i)$ weakly decreasing; (since
$X^I$ is the leading term of $f$, and $f$ is antisymmetric,
$\sigma(\mathcal{O}_{\mathfrak{A}_n}(X^I)) \neq
\mathcal{O}_{\mathfrak{A}_n}(X^I)$).    Since $x_1^2 \cdots x_n^2$
is central and symmetric, we can factor it out of $f$, obtaining an
antisymmetric invariant of smaller degree. Hence we may assume
without loss of generality that $\lambda_n = 0$ or $1$.  If
$\lambda_n=1$, then each $x_i$ occurs in all terms of $f$, so we can
factor out $(x_1 \cdots x_n)$ from $f$ and write $f= h (x_1 \cdots
x_n)$ for some $\mathfrak{A}_n$-invariant $h$.  It follows that $h$
is symmetric, and we are done.  Hence, assume that $\lambda_n = 0$
and $I = (\lambda_1, \ldots, \lambda_{n-1}, 0)$.

Now we induct on the order of $I$. The lowest order possible for $I$
is when $I=(\lambda_1,0,\cdots,0)$. Since $n\geq 3$, we have
$\lambda_{n-1}=0$. If $\lambda_{n-1} = 0$, then the transposition
$\tau = (n-1,n)$ stabilizes $X^I$ and hence $\mathcal{O}(X^I)$ is
$\mathfrak{S}_n$-invariant [Lemma \ref{zzlem4.1}]. Consequently,
$f=0$ and we are done. Therefore we can assume that $\lambda_i \neq
0$ for all $i=1,\cdots,n-1$.
 Let $I^* = (\lambda_1-1,
\lambda_2-1 , \ldots, \lambda_{n-1}-1,0)$, which is a weakly
decreasing sequence, and let $h =
\mathcal{O}_{\mathfrak{A}_n}(X^{I^*}) + \sigma
\mathcal{O}_{\mathfrak{A}_n}(X^{I^*})$, which is
$\mathfrak{S}_n$-invariant (it is possible that $ \mathcal{O}(X^I)$
itself is $\mathfrak{S}_n$-invariant -- e.g. if $\lambda_{n-1} = 1$
or $I^*$ has two even entries that are equal -- in this case $h = 2
\mathcal{O}_{\mathfrak{A}_n}(X^{I^*})$).  Let $g= h
\mathcal{O}_{\mathfrak{A}_n}(x_1 \cdots x_{n-1})$, which is an
antisymmetric $\mathfrak{A}_n$-invariant that is a product of a
$\mathfrak{S}_n$-invariant and $\mathcal{O}_{\mathfrak{A}_n}(x_1
\cdots x_{n-1})$. We claim that $\pm f$ is a summand of $g$ and that
all other terms have lower order; by induction these claims will
complete the proof. Notice that the terms $g_1$ and $g_2$ occur in
$g$ where
$$g_1 = (x_1^{\lambda_1 - 1} x_2^{\lambda_2 - 1} \cdots
x_{n-1}^{\lambda_{n-1} - 1}) (x_1 \cdots x_{n-1}) $$
$$g_2 = (x_2^{\lambda_1 - 1} x_1^{\lambda_2 - 1} \cdots
x_{n-1}^{\lambda_{n-1} - 1}) (x_1 \cdots x_{n-1}),$$ and hence their
$\mathfrak{A}_n$-orbit sums occur in $g$. Note that $g_1 = \pm X^I$
and $g_2 =\pm \sigma X^I$  and $\sigma g_1 = - g_2$, and hence $\pm
f$ is a summand of $g$.  Finally notice that $X^I$ is clearly the
leading term of $g$ and so all the other terms of $g$ are of lower
order.  Hence $f \pm g$ is antisymmetric of lower order, hence of
the desired form by induction.

The argument of Lemma~\ref{zzlem3.9} shows that $S_1, S_2, \ldots ,
S_{n-1}$ can be obtained from $P_1, P_3, \ldots , P_{2n-3}$.
\end{proof}

In the above proof we have shown that antisymmetric invariants
correspond to partitions
$$I \mapsto \mathcal{O}_{\mathfrak{A}_n}(X^I) -
\sigma \mathcal{O}_{\mathfrak{A}_n}(X^I)$$ for any odd permutation
$\sigma$.  This antisymmetric invariant will be non-zero if and only
if $0 \neq \mathcal{O}_{\mathfrak{A}_n}(X^I)$ is not
$\mathfrak{S}_n$-invariant, i.e. $\mathcal{O}_{\mathfrak{A}_n}(X^I)$
has no odd permutations stabilizing it.  By the lemma below this is
equivalent to $I$ having no repeated even indices (by Lemma
\ref{zzlem4.3} this condition also assures
$\mathcal{O}_{\mathfrak{A}_n}(X^I) \neq 0$.)

\begin{lemma}
\label{zzlem4.11} Let $X^I$ be the highest degree lexicographic
ordered term in the $\mathfrak{A}_n$-orbit of $X^I$.  Then $\sigma
\mathcal{O}_{\mathfrak{A}_n}(X^I) =
\mathcal{O}_{\mathfrak{A}_n}(X^I)$ for an odd permutation $\sigma$
if and only if $I$ has at least two entries
 $\lambda_j = \lambda_k$ that are an even number (including $0$).
\end{lemma}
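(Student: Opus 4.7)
My plan is to reformulate the condition ``$\sigma \mathcal{O}_{\mathfrak{A}_n}(X^I) = \mathcal{O}_{\mathfrak{A}_n}(X^I)$ for some odd $\sigma$'' as ``$\mathcal{O}_{\mathfrak{A}_n}(X^I)$ is $\mathfrak{S}_n$-invariant''. Since $\mathcal{O}_{\mathfrak{A}_n}(X^I)$ is automatically $\mathfrak{A}_n$-invariant and $\mathfrak{S}_n = \mathfrak{A}_n \cup \sigma\mathfrak{A}_n$, invariance under one odd element upgrades to full $\mathfrak{S}_n$-invariance. So the lemma reduces to the chain: $\mathcal{O}_{\mathfrak{A}_n}(X^I)$ is $\mathfrak{S}_n$-invariant $\iff$ $stab_{\mathfrak{S}_n}(X^I)$ contains an odd permutation $\iff$ $I$ has a repeated even entry. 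The degenerate case $\mathcal{O}_{\mathfrak{A}_n}(X^I) = 0$ forces both a repeated even and a repeated odd entry by Lemma~\ref{zzlem4.3}, so both sides hold trivially; I assume $\mathcal{O}_{\mathfrak{A}_n}(X^I) \ne 0$ hereafter.

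For the first equivalence, Lemma~\ref{zzlem4.1} supplies the $\Leftarrow$ direction. For $\Rightarrow$, if $stab_{\mathfrak{S}_n}(X^I) = stab_{\mathfrak{A}_n}(X^I)$, then $\mathfrak{A}_n \cdot X^I$ and $\sigma\mathfrak{A}_n \cdot X^I$ share no signed monomial with the same sign in $\knx$ (any such coincidence would compose to an odd strict stabilizer). Nonvanishing of $\mathcal{O}_{\mathfrak{A}_n}(X^I)$ together with Lemma~\ref{zzlem4.3} guarantees that $\mathfrak{A}_n \cdot X^I$ contains no pair $\{X^J, -X^J\}$, so the expansion of $\mathcal{O}_{\mathfrak{A}_n}(X^I)$ into signed monomials has unambiguous coefficients; the equality $\sigma\mathcal{O}_{\mathfrak{A}_n}(X^I) = \mathcal{O}_{\mathfrak{A}_n}(X^I)$ would then force some monomial to appear with the same sign in both orbits, a contradiction.

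The second equivalence is the heart of the argument and uses the sign rule $x_i^a x_j^b = (-1)^{ab} x_j^b x_i^a$ in $\knx$. Let $L_k := \{i : \lambda_i = k\}$, $O := \bigcup_{k \textrm{ odd}} L_k$, and $E$ its complement. A permutation $g$ strictly fixes $X^I$ iff (a)~it permutes each $L_k$ to itself, and (b)~the sign arising when $g(X^I)$ is reordered into canonical form equals $+1$. Since only pairs of odd-exponent factors contribute signs, condition (b) reads $\operatorname{sgn}(g|_O) = +1$. Writing $\operatorname{sgn}(g) = \operatorname{sgn}(g|_O) \cdot \operatorname{sgn}(g|_E)$, such a strict stabilizer is odd precisely when $\operatorname{sgn}(g|_E) = -1$, which is possible if and only if some even-valued $L_k$ has size at least two, i.e., $I$ has a repeated even entry. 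A transposition inside such a level set gives an explicit witness.

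The main obstacle is the converse of the first equivalence: orbit sums in $\knx$ can partially cancel, so disjointness of orbits does not immediately yield distinctness of orbit sums. The careful separation of ``strict stabilizer'' from ``signed stabilizer'', together with the nonvanishing hypothesis on $\mathcal{O}_{\mathfrak{A}_n}(X^I)$, is what makes the term-by-term signed-monomial comparison rigorous.
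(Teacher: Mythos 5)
Your proof is correct, and it shares the paper's top-level reduction: both you and the paper convert $\sigma \mathcal{O}_{\mathfrak{A}_n}(X^I) = \mathcal{O}_{\mathfrak{A}_n}(X^I)$ into the existence of an odd permutation in $stab_{\mathfrak{S}_n}(X^I)$ (the paper matches terms to get $\sigma X^I = \tau X^I$ with $\tau$ even, so $\tau^{-1}\sigma$ is an odd strict stabilizer; the forward direction in both cases is a transposition inside a repeated even level, as in Lemma \ref{zzlem4.1}). Where you genuinely diverge is the decisive combinatorial step. The paper argues by contradiction via cycle decomposition: assuming no repeated even entries, it writes the odd stabilizer as a product of disjoint cycles, notes that exponents are constant on cycle supports, and counts sign flips cycle by cycle to conclude $\sigma X^I = -X^I$. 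You instead give a structural description of the strict stabilizer: $g$ fixes $X^I$ in $\knx$ iff it preserves each level set $L_k$ and the sorting sign, which equals $\operatorname{sgn}(g|_O)$, is $+1$; the factorization $\operatorname{sgn}(g)=\operatorname{sgn}(g|_O)\operatorname{sgn}(g|_E)$ then reduces everything to whether some even-valued level set has size at least two, with a transposition as explicit witness. This treats both directions uniformly, avoids the cycle-by-cycle count, and is arguably cleaner. You are also more careful than the paper on two points it leaves implicit: the degenerate case $\mathcal{O}_{\mathfrak{A}_n}(X^I)=0$, where term-matching does not literally apply but both sides of the equivalence hold by Lemma \ref{zzlem4.3}; and the sign-matching step, since ruling out $\sigma X^I = -\tau X^I$ requires knowing the orbit contains no pair $\{X^J, -X^J\}$ --- which, as you observe, follows from nonvanishing, because such a pair would force the entire orbit sum to vanish. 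Both refinements are correct and strengthen the argument rather than change its substance.
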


\begin{proof}
If $\lambda_j= \lambda_k$ is even then $(j,k) X^I = X^I$ so
$\mathfrak{S}_n = \mathfrak{A}_n \cup \mathfrak{A}_n (j,k)$ and the
$\mathfrak{A}_n$-orbit of $X^I$ is the same as the
$\mathfrak{S}_n$-orbit of $X^I$ so $(j,k)
\mathcal{O}_{\mathfrak{A}_n}(X^I)
=\mathcal{O}_{\mathfrak{A}_n}(X^I)$, and, in fact, any permutation
stabilizes the orbit sum.

Conversely, suppose that there is an odd permutation $\sigma$ with
$\sigma \mathcal{O}_{\mathfrak{A}_n}(X^I) =
\mathcal{O}_{\mathfrak{A}_n}(X^I)$. Since $\sigma X^I$ is in the
$\mathfrak{A}_n$-orbit of $X^I$ we must have $\sigma X^I = \tau X^I$
for $\tau$ an even permutation. Hence $\tau^{-1} \sigma X^I = X^I$
so $X^I$ is stabilized by an odd permutation.  Suppose that $I$ has
no repeated even entries, and  write $\sigma = \nu_1 \cdots
\nu_{2m+1} \mu_1 \cdots \mu_k$ as a product of disjoint cycles,
where $\nu_i$ are odd permutations and $\mu_i$ are even.  Noting
that entries of $I$ in the support of each cycle must be constant
and all repeated entries are assumed to be odd, we see that each
$\mu_i X^I = X^I$ because $\mu_i$ is the product of an even number
of transpositions of variables with the same odd exponents and so
each transposition changes the sign; since there are an even number
of sign changes $\mu_i X^I = X^I$.  However $\nu_i X^I = -X^I$ since
$\nu_i$ is the product of an odd number of interchanges of variables
to the same odd power, and hence results in an odd number of sign
changes.  Hence
$$\sigma X^I = \nu_1 \cdots \nu_{2m+1} \mu_1 \cdots \mu_k X^I
= \nu_1 \cdots \nu_{2m+1} X^I = (-1)^{2m+1} X^I = -X^I,$$
contradicting $\sigma X^I = X^I$. Hence $I$ must have at least one
repeated even entry.
\end{proof}

We note that in the commutative case the antisymmetric nonzero
invariants $\mathcal{O}_{\mathfrak{A}_n}(X^I) - \sigma
\mathcal{O}_{\mathfrak{A}_n}(X^I)$ that corresponding to a partition
$I$ are those with all entries of $I$ distinct.

We next compute the Hilbert series for $\knx^{\mathfrak{A}_n}$ and
use it to show that $\knx^{\mathfrak{A}_n}$ is a cci.
For specific values of $n$ the coefficients of these
series do not seem to be in the {\it Online Encyclopedia of Integer
Sequences}.

\begin{lemma}
\label{zzlem4.12} The Hilbert series of $\knx^{\mathfrak{A}_n}$ is
given by
\begin{eqnarray*}
 H_{\knx^{\mathfrak{A}_n}}(t)&=& \frac{(1-t^2)(1-t^6)(1-t^{10})
 \cdots (1-t^{4n-2})(1+t^n)(1+t^{n-1})}{(1-t)(1-t^2)(1-t^3)
 \cdots (1-t^{2n-1})(1-t^{2n})(1+t^{2n-1})}.
 \end{eqnarray*}
\end{lemma}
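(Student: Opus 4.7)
The strategy is to use the vector-space decomposition $\knx^{\mathfrak{A}_n} = \knx^{\mathfrak{S}_n} \oplus W$ from Lemma~\ref{zzlem4.5}, where $W$ is the space of antisymmetric $\mathfrak{A}_n$-invariants, and to count a homogeneous basis of each summand separately. The symmetric part contributes $D_n(t)$, the generating function for partitions of length at most $n$ with no repeated odd entries, exactly as in the proof of Theorem~\ref{zzthm3.12} (via Proposition~\ref{xxpro5.1}).

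For the antisymmetric part, the paragraph preceding Lemma~\ref{zzlem4.11}, together with Lemmas~\ref{zzlem4.3} and~\ref{zzlem4.11}, furnishes a basis: the elements $\mathcal{O}_{\mathfrak{A}_n}(X^I) - \sigma\mathcal{O}_{\mathfrak{A}_n}(X^I)$ as $I = (\lambda_1 \geq \cdots \geq \lambda_n \geq 0)$ runs over partitions with no repeated even entries (which forces at most one zero, necessarily $\lambda_n$). Linear independence is clear from the disjointness of $\mathfrak{S}_n$-orbits of distinct partitions, and spanning follows because any antisymmetric invariant is a linear combination of $\mathfrak{A}_n$-orbit sums supported on $\mathfrak{S}_n$-orbits that split into two $\mathfrak{A}_n$-orbits. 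To count such partitions, split on $\lambda_n = 0$ versus $\lambda_n \geq 1$ and apply the shift $\mu_i = \lambda_i - 1$ to the first $n-1$ or to all $n$ parts respectively; this shift preserves the pattern of consecutive equalities while reversing each parity, so it is a bijection onto partitions of the same length with no repeated odd entries, counted by $D_{n-1}(t)$ and $D_n(t)$ respectively. Tracking the degree drop yields
$$H_{\knx^{\mathfrak{A}_n}}(t) = (1 + t^n)\,D_n(t) + t^{n-1} D_{n-1}(t).$$

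To put this into the form stated in the lemma, I would use the simplified expression $D_n(t) = \prod_{k=1}^n (1+t^{2k-1})/\prod_{k=1}^n(1-t^{2k})$ obtained from Proposition~\ref{xxpro5.1} by cancelling the common factor $\prod_{k=1}^n(1-t^{2k-1})$, extract the common ratio $\prod_{k=1}^{n-1}(1+t^{2k-1})/\prod_{k=1}^{n-1}(1-t^{2k})$ from both terms, and combine them over the denominator $1-t^{2n}$. The resulting numerator collapses to $(1+t^{n-1})(1+t^n)$ by the routine identity
$$(1+t^n)(1+t^{2n-1}) + t^{n-1}(1-t^{2n}) = (1+t^{n-1})(1+t^n),$$
and reinserting the factor $1+t^{2n-1}$ into numerator and denominator (using $(1-t^{2n-1})(1+t^{2n-1}) = 1-t^{4n-2}$) reproduces exactly the stated expression. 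The main obstacle is the combinatorial enumeration in the second paragraph --- pinpointing the correct basis of $W$ and verifying that the shift $\mu_i = \lambda_i - 1$ sets up a clean bijection between ``no repeated even parts'' and ``no repeated odd parts'' --- but once this is in place, the algebraic manipulation is purely mechanical.
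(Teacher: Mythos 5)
Your proof is correct, and it follows the paper's overall skeleton --- the symmetric-plus-antisymmetric decomposition of Lemma \ref{zzlem4.5}, with the symmetric part counted by $D_n(t)$ and the antisymmetric part by partitions into at most $n$ parts with no repeated even parts (not even repeated $0$'s) --- but you handle the antisymmetric enumeration by a different, self-contained route. The paper delegates that count to Proposition \ref{xxpro6.3} of the Appendix, which introduces the auxiliary count $w_n(k)=d_n(k-n)$ of partitions with exactly $n$ nonzero parts (via the same parity-flipping shift you use) and then sets up the recurrence $S_n(t)=W_{n-1}(t)(1+t)+t^{2n}S_n(t)$ from the three cases $p_n=0,1,\geq 2$, arriving at $S_n(t)=D_n(t)\,t^{n-1}(1+t)/(1+t^{2n-1})$. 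You instead split only on $\lambda_n=0$ versus $\lambda_n\geq 1$ and apply the shift $\mu_i=\lambda_i-1$ directly (correctly noting that ``no repeated even parts'' forces all of the first $n-1$, resp.\ all $n$, parts to be nonzero), obtaining the closed two-term formula $S_n(t)=t^{n-1}D_{n-1}(t)+t^nD_n(t)$ with no recursion. The two answers agree: using the recurrence $D_n(t)=D_{n-1}(t)(1+t^{2n-1})/(1-t^{2n})$ one checks
\begin{equation*}
t^{n-1}D_{n-1}(t)+t^nD_n(t)
= D_n(t)\,\frac{t^{n-1}(1-t^{2n})+t^n(1+t^{2n-1})}{1+t^{2n-1}}
= D_n(t)\,\frac{t^{n-1}(1+t)}{1+t^{2n-1}},
\end{equation*}
and your terminal identity $(1+t^n)(1+t^{2n-1})+t^{n-1}(1-t^{2n})=(1+t^{n-1})(1+t^n)$ is exactly the simplification the paper performs when it writes $1+t^{n-1}(1+t)/(1+t^{2n-1})=(1+t^n)(1+t^{n-1})/(1+t^{2n-1})$, so the final expressions coincide. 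What your version buys is independence from the Appendix and a more transparent bijection; what the paper's version buys is reusable machinery (Propositions \ref{xxpro6.1} and \ref{xxpro6.3} serve both this lemma and Theorem \ref{zzthm3.12}). You also spell out the spanning and linear-independence of the antisymmetric basis (spanning via $2f=\sum c_I(\mathcal{O}_{\mathfrak{A}_n}(X^I)-\sigma\mathcal{O}_{\mathfrak{A}_n}(X^I))$ from Lemma \ref{zzlem3.4}, independence from disjointness of $\mathfrak{S}_n$-orbit supports, nonvanishing from Lemmas \ref{zzlem4.3} and \ref{zzlem4.11}), which the paper compresses into ``by remarks above.'' One cosmetic slip: your citation for $D_n(t)$ should be Proposition \ref{xxpro6.1}, not Proposition \ref{xxpro5.1} --- though the paper itself commits the identical misreference in the proof of Theorem \ref{zzthm3.12}.
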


\begin{proof}
By remarks above in each dimension the invariants are vector space
direct sums of the symmetric invariants and the antisymmetric
invariants, so the Hilbert series $H_{\knx^{\mathfrak{A}_n}}(t)$ for
the invariants under $\mathfrak{A}_n$ is the sum of
$H_{\knx^{\mathfrak{S}_n}}(t)$ and the generating function $S_n(t)$
for $s_n(k)$, the number of partitions of $k$ with at most $n$ parts
having no repeated even parts (not even $0$).  By
Proposition~\ref{xxpro6.3} of the Appendix we have
\[ S_n(t) = D_n(t) \frac{t^{n-1}(1+t)}{(1+t^{2n-1})}. \]
Hence
\begin{eqnarray*}
H_{\knx^{\mathfrak{A}_n}}(t) &=& D_n(t) + S_n(t) = D_n(t) + D_n(t)
\frac{t^{n-1}(1+t)}{(1+t^{2n-1})} \\
&=& D_n(t) \left(1 +  \frac{t^{n-1}(1+t)}{(1+t^{2n-1})}\right) \\
&=&D_n(t) \frac{(1+t^n)(1+t^{n-1})}{(1+t^{2n-1})} \\
&=& \frac{(1-t^2)(1-t^6)(1-t^{10})\cdots (1-t^{4n-2})(1+t^n)(1+t^{n-1})}
{(1-t)(1-t^2)(1-t^3) \cdots (1-t^{2n-1})(1-t^{2n})(1+t^{2n-1})}.
\end{eqnarray*}
Canceling yields the expression in equation~\eqref{E4.0.1}.
\end{proof}

Consider the algebras given by
\begin{eqnarray*}
B_{n-1}&=& k[p_1, \cdots, p_n][y_1: \tau_1, \delta_1] \cdots
[y_{n-1}: \tau_{n-1}, \delta_{n-1}]\\
B_{n+1}&=& B_{n-1}[y_{n+1};\tau_{n+1}]\\
B_{n+2}&=& B_{n+1}[y_{n+2};\tau_{n+2},\delta_{n+2}].
\end{eqnarray*}
For $i \leq n-1$ define $\tau_i$ and $\delta_i$ as for the algebra
$B$ considered in the previous section (note that $B$ is not a
subalgebra of $C$ since $y_n$ is not adjoined). Define the
$\tau_{n+1}$ by letting it be the identity on $R=k[p_1,
\cdots, p_n]$ and $\tau_{n+1}(y_i) = (-1)^{n-1}y_i$ for $i\leq n-1$.
Then $\tau_{n+1}$ extends uniquely to an algebra automorphism
of $B_{n-1}$. Define the algebra automorphism $\tau_{n+2}$ of $D_n$ by
letting it be the identity on $R$ and letting
$$\tau_{n+2}(y_i) = \begin{cases} (-1)^ny_i & {\text{if}} \; i\leq n-1,\\
(-1)^{n+1} y_{n+1} & {\text{if}} \; i=n+1.\end{cases}$$
The derivation $\delta_{n+2}$ is given by letting $\delta_{n+2}(a)=0$
for all $a \in R$, $\delta_{n+2}(y_i) = (-1)^{n-1} 2 n a_{2i-2}y_{n+1}$
for $i\leq n-1$, and $\delta_{n+2}(y_{n+1})=0.$  Recall that $a_{2i-2} =
f_{2i-2}(p_1, p_2, \ldots p_n)$ where $f_{2i-2}$ is given by
\eqref{E3.7.1}.

\begin{lemma}\label{zzlem4.13}
Retain the above notation.
\begin{enumerate}
\item
$\tau_{n+2}$ is an algebra automorphism of $B_{n+1}$.
\item
$\delta_{n+2}$ is a $\tau_{n+2}$-derivation of $B_{n+1}$.
\end{enumerate}
\end{lemma}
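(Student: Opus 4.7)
The plan is to verify both assertions by a direct check on the defining relations of $B_{n+1}$, in the style of Lemma~\ref{zzlem2.3}. For part~(1), since $\tau_{n+2}$ is prescribed on the generators as the identity on $R:=k[p_1,\ldots,p_n]$ and as multiplication by $\pm 1$ on each $y_i$ (for $i\le n-1$) and on $y_{n+1}$, uniqueness of its extension to $B_{n+1}$ and its invertibility are automatic; the only content is to show the scalar prescription respects the relations. For part~(2), once consistency with the relations is shown, the $\tau_{n+2}$-derivation rule uniquely extends $\delta_{n+2}$ from the generators to all of $B_{n+1}$.

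For part~(1), the relations of $B_{n+1}$ fall into three families. First, the commutation relations inside $R$ are trivially preserved, since $\tau_{n+2}|_R=\operatorname{id}$. Second, the relations $y_jy_i+y_iy_j=2a_{2i+2j-2}$ for $1\le i<j\le n-1$ are preserved because the left side picks up a factor $(-1)^{2n}=1$ and the right side lies in $R$. Third, the relation $y_{n+1}y_i=(-1)^{n-1}y_iy_{n+1}$ (for $i\le n-1$), which defines $B_{n+1}=B_{n-1}[y_{n+1};\tau_{n+1}]$ with no derivation, is preserved because $y_{n+1}y_i$ and $y_iy_{n+1}$ are each multiplied by the same scalar $(-1)^n(-1)^{n+1}=-1$; the commutation $[y_{n+1},R]=0$ is preserved because $\tau_{n+2}$ fixes $R$.

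For part~(2), I would apply $\delta_{n+2}$ via the $\tau_{n+2}$-derivation rule to each defining relation and verify the output vanishes. The relations living inside $R$, and the relations between $R$ and the $y_i$'s, are immediate from $\delta_{n+2}(R)=0$ combined with the fact that $y_{n+1}$ centralizes $R$. Only two relations require a genuine computation. On $y_jy_i+y_iy_j-2a_{2i+2j-2}$ for $1\le i<j\le n-1$, the four Leibniz terms, after moving $y_{n+1}$ past $y_i$ and $y_j$ via $y_{n+1}y_i=(-1)^{n-1}y_iy_{n+1}$ and substituting $\tau_{n+2}(y_i)=(-1)^n y_i$ (using the parity identity $(-1)^{2(n-1)}=1$), collapse to the pairwise cancellation $-2na_{2i-2}y_jy_{n+1}+2na_{2j-2}y_iy_{n+1}-2na_{2j-2}y_iy_{n+1}+2na_{2i-2}y_jy_{n+1}=0$. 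On $y_{n+1}y_i-(-1)^{n-1}y_iy_{n+1}$, only two Leibniz terms survive (since $\delta_{n+2}(y_{n+1})=0$), and they both reduce to $2na_{2i-2}y_{n+1}^2$ with matching signs $(-1)^{n+1}(-1)^{n-1}=1$ and $(-1)^{n-1}(-1)^{n-1}=1$, producing cancellation.

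The argument is essentially bookkeeping; the main risk is tracking the three parity factors $(-1)^n$, $(-1)^{n\pm 1}$ correctly and remembering that each $a_{2i-2}$ lies in the central subring $R$, so commutes with every $y_j$. No structural obstacle beyond the universal extension principle underlying Lemma~\ref{zzlem2.3} arises.
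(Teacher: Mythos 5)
Your proposal is correct and follows essentially the same route as the paper: extend the maps from generators via the uniqueness principle of Lemma~\ref{zzlem2.3}, then verify each defining relation of $B_{n+1}$ directly, and your sign bookkeeping — the cancellations $\pm 2na_{2i-2}y_jy_{n+1}$, $\pm 2na_{2j-2}y_iy_{n+1}$ on the relation $y_iy_j+y_jy_i-2a_{2i+2j-2}$, and the two terms $\pm 2na_{2i-2}y_{n+1}^2$ on the relation involving $y_{n+1}$ via $(-1)^{n+1}(-1)^{n-1}=1$ and $(-1)^n(-1)^{n-1}=-1$ — matches the paper's computation exactly, with centrality of $R$ correctly invoked. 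The paper leaves part~(1) as ``straightforward to check,'' so your explicit three-family verification simply fills in that detail.
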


\begin{proof} (1) It is straightforward to check that
$\tau_{n+2}$ is an algebra automorphism of $B_{n+1}$.

(2) The relations of $B_{n+1}$ are of the form
$$\begin{aligned}
y_ia-a y_i &=0, \; \forall \; i=1,\cdots,n-1, n+1,\; a\in R\\
y_iy_j+y_jy_i& =2 \; a_{2i+2j-2}, \; \forall \; 1\leq i, j\leq n-1,\\
y_{n+1}y_i+ (-1)^n y_i y_{n+1}&=0, \; \forall i=1,\cdots, n-1.
\end{aligned}
$$
The proof of $\delta_{n+2}$ preserving the relations $y_ia-a y_i =0$
is similar to the proof of Lemma \ref{zzlem2.3}(2). Now we show that
$\delta_{n+2}$ preserves other relations. For $i,j\leq n-1$,
$$\begin{aligned}
\delta_{n+2}&(y_iy_j+y_jy_i-2a_{2i+2j-2})\\
&=
\delta_{n+2}(y_i)y_j+\tau_{n+2}(y_i)\delta_{n+2}(y_j)+
\delta_{n+2}(y_j)y_i+\tau_{n+2}(y_j)\delta_{n+2}(y_i) \quad \\
&= (-1)^{n-1}2n a_{2i-2} y_{n+1} y_j+(-1)^{n}y_i
(-1)^{n-1}2n a_{2j-2} y_{n+1}\\
&\quad +
(-1)^{n-1}2n a_{2j-2} y_{n+1} y_i+(-1)^{n}y_j
(-1)^{n-1}2n a_{2i-2} y_{n+1}\\
&=0
\end{aligned}
$$
For $i\leq n-1$, we have
$$\begin{aligned}
\delta_{n+2}&(y_{n+1}y_i+(-1)^{n}y_iy_{n+1})\\
&=\tau_{n+2}(y_{n+1})\delta_{n+2}(y_i)+
(-1)^n\delta_{n+2}(y_i)y_{n+1}\\
&= (-1)^{n+1}y_{n+1} (-1)^{n-1} 2n a_{2i-2} y_{n+1}
+(-1)^n (-1)^{n-1} 2n a_{2i-2} y_{n+1}y_{n+1}\\
&= 0.
\end{aligned}
$$
\end{proof}

The above lemma verifies that $\delta_{n+2}$ is a
$\tau_{n+2}$-derivation.
Let $C=B_{n+2}$. The algebra $C$ is AS regular of dimension $2n+1$.
Grade $C$ by letting degree$(y_i) = 2i-1$ for $i \leq n-1$,
degree$(y_{n+1})=n$, and degree$(y_{n+2})=n-1.$ Then the Hilbert
series of $C$ is given by
\[ H_C(t) =  \frac{1}{(1-t)(1-t^2)\cdots (1-t^{2n-3})(1-t^{2n-2})
(1-t^{2n})(1-t^n)(1-t^{n-1})}. \]
Since $\mathfrak{A}_n \leq \mathfrak{S}_n$, the algebra $k[x_1^2,
x_2^2, \ldots , x_n^2]^{\mathfrak{S}_n}$ is a subalgebra of
$\knx^{\mathfrak{A}_n}$. Then $k[x_1^2, x_2^2, \ldots ,
x_n^2]^{\mathfrak{S}_n} = k[\rho_1, \rho_2, \ldots, \rho_n],$ a
commutative polynomial ring where $\rho_i = \sigma_i(x_1^2, x_2^2,
\ldots , x_n^2)$ and $\sigma_i$ is the $i$th elementary polynomial.
Observe that
\begin{equation} \label{E3.12.1}\tag{E3.12.1}
\mathcal{O}_{\mathfrak{A}_n}(x_1 \cdots x_{n-1})^2 = \pm
\mathcal{O}_{\mathfrak{A}_n}((x_1 \cdots x_{n-1})^2) = \pm
\mathcal{O}_{\mathfrak{A}_n}(x_1^2 \cdots x_{n-1}^2)
\end{equation}
because
$$\begin{aligned}
&(x_1 \cdots x_k \cdots x_{n-1}) (x_1 \cdots x_{k-1} x_{k+1}\cdots x_{n})\\
& = (-1)^{n-2}(x_1 \cdots x_k \cdots x_{n-1} x_{n})
(x_1 \cdots x_{k-1} x_{k+1}\cdots x_{n-1})\\
& = (-1)^{n}(-1)^{(n-k) + (k-1)}(x_1 \cdots x_{k-1} x_{k+1}
\cdots x_{n-1} x_{n}) (x_1 \cdots x_{k-1} x_k
x_{k+1}\cdots x_{n-1})\\
& = - (x_1 \cdots x_{k-1} x_{k+1} \cdots x_{n-1} x_{n})
(x_1 \cdots x_{k-1} x_k x_{k+1}\cdots x_{n-1}),
\end{aligned}$$
so the orbits of
the cross-terms cancel out, leaving only an orbit in the $x_i^2$
that is symmetric. Hence we can write
$\mathcal{O}_{\mathfrak{A}_n}(x_1^2x_2^2\cdots x_{n-1}^2) =
g(\rho_1, \rho_2, \ldots , \rho_n)$ for a polynomial $g$. Similarly,
$(x_1x_2\cdots x_n)^2= \pm x_1^2x_2^2\cdots x_n^2 =h(\rho_1, \rho_2,
\ldots , \rho_n)$ for a polynomial $h$.

As in the previous section let for $i \leq n-1$ let $r_i = y_i^2 -
a_{4i-2}$. Let
\begin{equation}
\label{E4.13.1}\tag{E4.13.1}
b_1=g(p_1, p_2, \ldots , p_n)
\end{equation}
and
\begin{equation}
\label{E4.13.2}\tag{E4.13.2}
b_2=h(p_1, p_2, \ldots ,p_n)
\end{equation}
and consider two additional relations $r_{n+1}
=y_{n+1}^2-b_2$ and $r_{n+2} = y_{n+2}^2-b_1$.

The proof of the following lemma is the same as that of
Lemma~\ref{zzlem3.11}.

\begin{lemma}\label{zzlem4.14}
The sequence $\{r_1, r_2, \ldots , r_{n-1}, r_{n+1}, r_{n+2}\}$
is a central regular sequence in $C$.
\end{lemma}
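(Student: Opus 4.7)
The plan is to mimic the structure of the proof of Lemma~\ref{zzlem3.11}, splitting the argument into a centrality part and a regularity part.

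First I would verify centrality of each $r_i$ in $C$. For $i\leq n-1$, this is a direct transcription of the computation in Lemma~\ref{zzlem3.11}: $a_{4i-2}\in R$ is central in $C$ since $\tau_j$ fixes $R$ and $\delta_j$ annihilates $R$ for all relevant $j$; moreover $y_i^2$ commutes with $R$, with each $y_j$ for $j\neq i$ (the two anticommutations cancel), with $y_{n+1}$ (two factors of $(-1)^{n-1}$), and with $y_{n+2}$ (as verified in Lemma~\ref{zzlem4.13}(2) where we computed $\delta_{n+2}(y_i^2)=0$). For $r_{n+1}=y_{n+1}^2-b_2$ the element $b_2\in R$ is central, and $y_{n+1}^2$ commutes with every $y_i$ ($i\leq n-1$) because $y_{n+1}y_i=(-1)^{n-1}y_iy_{n+1}$ gives a double sign flip, and commutes with $y_{n+2}$ because $y_{n+2}y_{n+1}=(-1)^{n+1}y_{n+1}y_{n+2}$ (using $\delta_{n+2}(y_{n+1})=0$). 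For $r_{n+2}=y_{n+2}^2-b_1$ the element $b_1\in R$ is central, and a direct computation shows $y_{n+2}^2$ commutes with each $y_i$: using $y_{n+2}y_i=(-1)^ny_iy_{n+2}+(-1)^{n-1}2na_{2i-2}y_{n+1}$ and iterating, the two contributions from the derivation (one from each factor of $y_{n+2}$) cancel because $y_{n+2}y_{n+1}=(-1)^{n+1}y_{n+1}y_{n+2}$ flips the sign of the second contribution exactly. Commutation of $y_{n+2}^2$ with $y_{n+1}$ and with $R$ is immediate.

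Second I would establish that the top Ore extension structure descends to the quotient $\overline{B_{n-1}}=B_{n-1}/(r_1,\ldots,r_{n-1})$. We must check that $\tau_{n+1}$, $\tau_{n+2}$, and $\delta_{n+2}$ preserve the ideal $(r_1,\ldots,r_{n-1})$ of $B_{n-1}$. The automorphisms $\tau_{n+1}$ and $\tau_{n+2}$ scale each $y_i$ by $\pm 1$, so $\tau_{\bullet}(y_i^2)=y_i^2$ and $\tau_\bullet(a_{4i-2})=a_{4i-2}$, hence $\tau_\bullet(r_i)=r_i$. The derivation $\delta_{n+2}$ annihilates $R$ by definition, and the computation $\delta_{n+2}(y_i^2)=0$ already appears in the proof of Lemma~\ref{zzlem4.13}(2); thus $\delta_{n+2}(r_i)=0$. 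Hence the induced maps $\overline{\tau_{n+1}}$, $\overline{\tau_{n+2}}$, $\overline{\delta_{n+2}}$ on $\overline{B_{n-1}}$ are well defined and we obtain
\[
C/(r_1,\ldots,r_{n-1})\;\cong\;\overline{B_{n-1}}[y_{n+1};\overline{\tau_{n+1}}]\,[y_{n+2};\overline{\tau_{n+2}},\overline{\delta_{n+2}}],
\]
via the map sending $\sum b_I y_{n+1}^{e_{n+1}}y_{n+2}^{e_{n+2}}$ to the obvious representative, exactly as in Lemma~\ref{zzlem3.11}.

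Third, regularity follows from the standard polynomial-degree argument in iterated Ore extensions. Applying Lemma~\ref{zzlem3.11} (or rather its proof, restricted to the sub-iterated-Ore-extension $B_{n-1}$) gives that $\{r_1,\ldots,r_{n-1}\}$ is a regular sequence in $B_{n-1}$, and since $C$ is free as a right $B_{n-1}$-module on the ordered monomials in $y_{n+1},y_{n+2}$, this sequence remains regular in $C$. To see that $r_{n+1}$ is regular modulo $(r_1,\ldots,r_{n-1})$, write an arbitrary element of $C/(r_1,\ldots,r_{n-1})$ uniquely as $\sum c_{jk}y_{n+1}^j y_{n+2}^k$ with $c_{jk}\in\overline{B_{n-1}}$; then $r_{n+1}\cdot f=0$ forces the top $y_{n+1}$-degree coefficient to vanish by the Ore extension leading-term argument, and induction on degree gives $f=0$. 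The same argument applied with $y_{n+2}$ in place of $y_{n+1}$ handles $r_{n+2}$ in the further quotient.

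The main obstacle is the centrality of $r_{n+2}$: unlike in the setting of Lemma~\ref{zzlem3.11}, the Ore extension by $y_{n+2}$ has a nontrivial derivation, so $y_{n+2}$ merely skew-commutes-up-to-a-correction with each $y_i$. The cancellation that makes $y_{n+2}^2$ central is precisely engineered by the choice of $\delta_{n+2}(y_i)=(-1)^{n-1}2na_{2i-2}y_{n+1}$ together with $\tau_{n+2}(y_{n+1})=(-1)^{n+1}y_{n+1}$, and the verification requires care with the signs. Once this is in hand, the remainder is a mechanical copy of the proof of Lemma~\ref{zzlem3.11}.
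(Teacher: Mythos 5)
Your proposal is correct and takes essentially the same approach as the paper, whose proof of this lemma is simply the remark that it is the same as the proof of Lemma \ref{zzlem3.11}: verify centrality directly, pass to the quotient Ore extensions after checking that the $\tau$'s fix and the $\delta$'s annihilate the earlier $r_k$'s, and conclude regularity by the standard leading-term degree argument. Your explicit sign computation showing $y_{n+2}^2$ is central --- the cancellation of the two derivation contributions forced by $\tau_{n+2}(y_{n+1})=(-1)^{n+1}y_{n+1}$ and $\delta_{n+2}(y_{n+1})=0$ --- correctly supplies the one point the paper leaves implicit.
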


We are now ready to show that $\knx^{\mathfrak{A}_n}$ is a cci.

\begin{theorem}
\label{zzthm4.15}
The algebra $\knx^{\mathfrak{A}_n}$ is a cci.
\end{theorem}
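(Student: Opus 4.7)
The plan is to prove Theorem \ref{zzthm4.15} by adapting the template of Theorem \ref{zzthm3.12}: construct a graded algebra surjection from the AS regular iterated Ore extension $C = B_{n+2}$ onto $\knx^{\mathfrak{A}_n}$ whose kernel is generated by the central regular sequence $\{r_1,\ldots,r_{n-1},r_{n+1},r_{n+2}\}$ of Lemma \ref{zzlem4.14}, and then compare Hilbert series to conclude that the induced map is an isomorphism.

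First I would define $\phi \colon C \to \knx^{\mathfrak{A}_n}$ on the algebra generators by $\phi(p_i) = \rho_i$ for $1 \le i \le n$, $\phi(y_i) = P_{2i-1}$ for $1 \le i \le n-1$, $\phi(y_{n+1}) = x_1 \cdots x_n$, and $\phi(y_{n+2}) = \mathcal{O}_{\mathfrak{A}_n}(x_1 \cdots x_{n-1})$. All images lie in $\knx^{\mathfrak{A}_n}$ and the prescribed degrees are respected, so it remains to verify that $\phi$ respects the defining relations of $C$. This splits into three blocks: (i) the relations among the $p_i$ and the Ore relations among $y_1,\ldots,y_{n-1}$, which go through verbatim as in the proof of Theorem \ref{zzthm3.12}; (ii) the $\tau_{n+1}$-commutations $y_{n+1} y_i = (-1)^{n-1} y_i y_{n+1}$, which reduce to the identity $(x_1 \cdots x_n) P_{2i-1} = (-1)^{n-1} P_{2i-1} (x_1 \cdots x_n)$ in $\knx$; and (iii) the $\tau_{n+2}$-derivation relations
\begin{gather*}
y_{n+2} y_i - (-1)^n y_i y_{n+2} = (-1)^{n-1} 2n\, a_{2i-2}\, y_{n+1}, \\
y_{n+2} y_{n+1} - (-1)^{n+1} y_{n+1} y_{n+2} = 0.
\end{gather*}

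Next I would verify that each $r_\alpha$ lies in $\ker\phi$. For $1 \le i \le n-1$ this is immediate from $P_{2i-1}^2 = P_{4i-2} = f_{4i-2}(\rho_1,\ldots,\rho_n)$ by Lemma \ref{zzlem2.4}(4). For $r_{n+1} = y_{n+1}^2 - b_2$, we use that $(x_1 \cdots x_n)^2 = \pm\, x_1^2 \cdots x_n^2 = h(\rho_1,\ldots,\rho_n)$ together with the choice of $b_2$ in \eqref{E4.13.2}. For $r_{n+2} = y_{n+2}^2 - b_1$, we use equation \eqref{E3.12.1} together with \eqref{E4.13.1}. Surjectivity of $\phi$ then follows from Theorem \ref{zzthm4.10}, since the image contains both the odd power sums $P_1, P_3, \ldots, P_{2n-3}$ and the antisymmetric invariant $\mathcal{O}_{\mathfrak{A}_n}(x_1 \cdots x_{n-1})$. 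Thus $\phi$ descends to a graded surjection $\bar\phi \colon \bar C := C/(r_1,\ldots,r_{n-1},r_{n+1},r_{n+2}) \twoheadrightarrow \knx^{\mathfrak{A}_n}$. To upgrade $\bar\phi$ to an isomorphism, I would use the regularity of the sequence (Lemma \ref{zzlem4.14}) to compute
\[
H_{\bar C}(t) = H_C(t) \cdot \prod_{i=1}^{n-1}(1-t^{4i-2}) \cdot (1-t^{2n})(1-t^{2n-2}),
\]
and, after cancelling $(1-t^{2i-1})$ factors via $(1-t^{4i-2}) = (1-t^{2i-1})(1+t^{2i-1})$, match the result with the Hilbert series of $\knx^{\mathfrak{A}_n}$ given in Lemma \ref{zzlem4.12}. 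A graded surjection between spaces of equal Hilbert series is an isomorphism, so $\bar\phi$ would be an isomorphism, realizing $\knx^{\mathfrak{A}_n}$ as a cci.

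The hard part will be block (iii) of the relation check. Expanding $\mathcal{O}_{\mathfrak{A}_n}(x_1 \cdots x_{n-1}) P_{2i-1} - (-1)^n P_{2i-1} \mathcal{O}_{\mathfrak{A}_n}(x_1 \cdots x_{n-1})$ requires tracking signs as each signed summand $(-1)^{n-j} x_1 \cdots \widehat{x_j} \cdots x_n$ of the orbit sum is multiplied by each $x_k^{2i-1}$. The key observation is that contributions with $k \ne j$ cancel after collecting, while those with the missing index $k = j$ each produce a copy of $\pm 2(x_1 \cdots x_n)$ times an even-degree factor, and these must reassemble into the claimed scalar multiple of $a_{2i-2}(x_1 \cdots x_n)$ in order to match the derivation rule; a parallel but shorter computation handles the $y_{n+2}y_{n+1}$ relation.
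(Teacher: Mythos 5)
Your proposal is correct and follows essentially the same route as the paper's own proof: the identical assignment $\phi(p_i)=\rho_i$, $\phi(y_i)=P_{2i-1}$, $\phi(y_{n+1})=x_1\cdots x_n$, $\phi(y_{n+2})=\mathcal{O}_{\mathfrak{A}_n}(x_1\cdots x_{n-1})$, the same verification that the Ore relations and the elements $r_1,\ldots,r_{n-1},r_{n+1},r_{n+2}$ are respected (using \eqref{E3.12.1}, \eqref{E4.13.1}, \eqref{E4.13.2}), surjectivity via Theorem \ref{zzthm4.10}, and the Hilbert-series comparison through Lemmas \ref{zzlem4.14} and \ref{zzlem4.12}. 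Your block (iii) is exactly the step the paper disposes of with ``further calculation shows,'' and your sketch of it (cancellation for $k\neq j$, doubled $x_j^{2i-2}(x_1\cdots x_n)$ terms for $k=j$) is the right mechanism; just be careful when pinning down the resulting scalar against the normalization of $\delta_{n+2}$, as that constant is the only delicate point of the check.
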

\begin{proof}
Note that $\mathcal{O}_{\mathfrak{A}_n}(x_1x_2\cdots x_{n-1})$ and
$x_1x_2\cdots x_n$ are elements of $\knx^{\mathfrak{A}_n}$. Consider
the algebra $C$ constructed above and define a map $\phi: C
\longrightarrow \knx^{\mathfrak{A}_n}$ as follows: for $i \leq n$
let $\phi(p_i) = \rho_i$; for $i\leq n-1$ let $\phi(y_i)=P_{2i-1}$;
let $\phi(y_{n+1}) =x_1x_2\cdots x_n$; and let $\phi(y_{n+2})=
\mathcal{O}_{\mathfrak{A}_n}(x_1x_2\cdots x_{n-1}).$  Note that
$\phi$ takes $k[p_1, p_2, \ldots , p_n]$ isomorphically onto
$k[\rho_1, \rho_2, \ldots , \rho_n]$. In the proof of
Theorem~\ref{zzthm3.12} it was shown that $\phi$ preserves the skew
polynomial relations associated to $y_i$ for $i \leq n-1$.
Calculating shows that $(x_1x_2\cdots x_n)P_{2i-1} =
(-1)^{n-1}(x_1x_2\cdots x_n)P_{2i-1}$, and hence $\phi$ preserves
the relation associated to $y_{n+1}$. Further calculation shows that
$$\begin{aligned}
\mathcal{O}_{\mathfrak{A}_n}(x_1x_2\cdots x_{n-1})P_{2i-1} &=
(-1)^nP_{2i-1}\mathcal{O}_{\mathfrak{A}_n}(x_1x_2\cdots x_{n-1}) \\
&\quad + (-1)^{n-1} 2 n P_{2i-2}\;\cdot (x_1x_2\cdots x_n).
\end{aligned}$$ Since $\mathcal{O}_{\mathfrak{A}_n}(x_1x_2\cdots
x_{n-1})(x_1x_2\cdots x_n) = (-1)^{n-1} (x_1x_2\cdots
x_n)\mathcal{O}_{\mathfrak{A}_n}(x_1x_2\cdots x_{n-1})$
and $P_{2i-2}=f_{2i-2}(\rho_1,\rho_2, \ldots , \rho_n)$, the
relation associated
to $y_{n+2}$ is preserved by $\phi$. Hence $\phi$ is a graded ring
homomorphism.  The homomorphism $\phi$ is onto by
Theorem~\ref{zzthm4.10}. By \eqref{E3.12.1}
\begin{eqnarray*}
 0 &= &\mathcal{O}_{\mathfrak{A}_n}(x_1x_2\cdots x_{n-1})^2 -
 \mathcal{O}_{\mathfrak{A}_n}(x_1^2x_2^2\cdots x_{n-1}^2) \\
 & =& \mathcal{O}_{\mathfrak{A}_n}(x_1x_2\cdots x_{n-1}) -
g(\rho_1, \rho_2. \ldots , \rho_n) = \phi(y_{n+2}^2-b_1) =\phi(r_{n+2}).
\end{eqnarray*}
Similarly, $\phi(r_{n+1}) = \phi(y_{n+1}^2-b_2)=0.$  As in the proof
of Theorem~\ref{zzthm3.12} $\phi(r_i) =0$ for $i \leq n-1$.  Hence
$(r_1, r_2, \ldots , r_{n-1}, r_{n+1}, r_{n+2})\subseteq \ker(\phi)$,
and $\phi$ induces a graded ring homomorphism
$\bar{\phi}: \overline{C} \longrightarrow \knx^{\mathfrak{A}_n}$ where
$\overline{C} = C/(r_1, r_2, \ldots , r_{n-1}, r_{n+1},r_{n+2}).$

We have degree$(r_i) = 4i-2$ for $i\leq n-1$, degree$(r_{n+1}) =
2n$, and degree$(r_{n+2}) =2n-2$. Since $\{r_1, r_2, \ldots
,r_{n-1}, r_{n+1}, r_{n+2}\}$ is a regular sequence, the Hilbert
series of $\overline{C}$ is given by
\begin{eqnarray*}
H_{\overline{C}}(t)&=& \frac{(1-t^2)(1-t^6)(1-t^{10})\cdots
(1-t^{4n-6})(1-t^{2n})(1-t^{2n-2})}{(1-t)(1-t^2)
\cdots (1-t^{2n-2})(1-t^{2n})(1-t^n)(1-t^{n-1})}\\
&=&\frac{(1-t^2)(1-t^6)(1-t^{10})\cdots (1-t^{4n-6})(1-t^{2n})
(1-t^{2n-2})}{(1-t)(1-t^2) \cdots (1-t^{2n-2})(1-t^{2n})(1-t^n)(1-t^{n-1})}
\frac{(1-t^{4n-2})}{(1-t^{4n-2})} \\
&=&\frac{(1-t^2)(1-t^6)(1-t^{10})\cdots (1-t^{4n-2})(1+t^n)(1+t^{n-1})}
{(1-t)(1-t^2)(1-t^3) \cdots (1-t^{2n-1})(1-t^{2n})(1+t^{2n-1})}.
\end{eqnarray*}
This is the Hilbert series of $\knx^{\mathfrak{A}_n}$, and hence the
ring homomorphism $\bar{\phi}$ is an isomorphism as desired. The
assertion follows.
\end{proof}

\begin{theorem}
\label{zzthm4.16} $\lfloor \frac{n}{2}\rfloor=cyc(\knx^{\mathfrak{S}_n})
\leq cci^{+}(\knx^{\mathfrak{S}_n})\leq \lfloor \frac{n}{2}\rfloor+1.$
\end{theorem}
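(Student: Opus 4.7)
The plan is to derive the claimed chain of inequalities directly from Theorem \ref{zzthm3.14}, which in fact delivers the sharper identity $cci^+(\knx^{\mathfrak{S}_n})=cyc(\knx^{\mathfrak{S}_n})=\lfloor n/2\rfloor$. Each of the three parts of the statement is then immediate.

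\emph{Leftmost equality.} I would simply cite the Hilbert-series calculation in Theorem \ref{zzthm3.14}, which puts
$$H_{\knx^{\mathfrak{S}_n}}(t)=\frac{\prod_{s=\lfloor(n-1)/2\rfloor+2}^{n}(1-t^{4s-2})}{\prod_{j=1}^{\lfloor n/2\rfloor}(1-t^{4j})\;\prod_{i=1}^{n}(1-t^{2i-1})}$$
in reduced form, since the numerator exponents $4s-2$ (with $s>\lfloor(n-1)/2\rfloor+1$) are $\equiv 2\pmod 4$ while the denominator exponents are either odd or divisible by $4$. Counting the numerator factors gives $cyc(\knx^{\mathfrak{S}_n})=\lfloor n/2\rfloor$.

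\emph{Middle inequality.} This is the general fact $cyc(A)\le cci^+(A)$ noted immediately after Definition \ref{zzdef3.13}. For completeness, I would record the one-line justification: given any cci$^+$-presentation $A\cong R/(\Omega_1,\dots,\Omega_m)$ realizing $m=cci^+(A)$, with $R$ noetherian AS regular of the form $H_R(t)=1/\prod_i(1-t^{d_i})$ and $\{\Omega_1,\dots,\Omega_m\}$ a regular normal sequence, the standard Hilbert-series computation yields
$$H_A(t)=\frac{\prod_{s=1}^{m}(1-t^{\deg\Omega_s})}{\prod_i(1-t^{d_i})},$$
and after cancelling any common cyclotomic factors the reduced form has at most $m$ factors in the numerator, so $cyc(A)\le m$.

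\emph{Rightmost inequality.} The bound $cci^+(\knx^{\mathfrak{S}_n})\le \lfloor n/2\rfloor+1$ is weaker than the equality $cci^+(\knx^{\mathfrak{S}_n})=\lfloor n/2\rfloor$ already established in Theorem \ref{zzthm3.14} via the iterated Ore extension $F_{2n-1}$ together with the central regular sequence $\{u_{\lfloor(n-1)/2\rfloor+2},\dots,u_n\}$; no additional construction is needed.

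There is essentially no obstacle: the theorem as worded is a direct corollary of Theorem \ref{zzthm3.14} and the universal comparison $cyc\le cci^+$. The only point worth remarking is that the upper bound $\lfloor n/2\rfloor+1$ is not sharp, the sharper value $\lfloor n/2\rfloor$ having already been recorded; one may read the ``$+1$'' slack as a reminder that, in contrast, a construction producing one extra central defining relation is what arises in the parallel setting for the alternating group in Theorem \ref{zzthm4.15}.
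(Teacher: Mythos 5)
Your proposal proves the statement exactly as printed, and as printed it is indeed an immediate corollary of Theorem \ref{zzthm3.14} --- but that very triviality is the tell-tale sign you have misread the situation. The statement of Theorem \ref{zzthm4.16} contains a typo, repeated throughout its proof: every occurrence of $\knx^{\mathfrak{S}_n}$ is meant to be $\knx^{\mathfrak{A}_n}$. This is unambiguous from context: the theorem closes the section on the alternating group; the paper's proof invokes Theorem \ref{zzthm4.15} and computes $H_{\knx^{\mathfrak{A}_n}}(t)$ (the displayed series is the one from Lemma \ref{zzlem4.12}, with the factor $(1+t^n)(1+t^{n-1})/(1+t^{2n-1})$); and the construction uses the elements $b_1,b_2$ of \eqref{E4.13.1}--\eqref{E4.13.2}, which encode the squares of the two antisymmetric invariants $\mathcal{O}_{\mathfrak{A}_n}(x_1\cdots x_{n-1})$ and $x_1\cdots x_n$ --- data that plays no role in the $\mathfrak{S}_n$ theory. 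A standalone theorem restating a strictly weaker form of Theorem \ref{zzthm3.14} would be pointless; the actual content is that for $\mathfrak{A}_n$ the paper can pin down $cci^{+}$ only to within $1$.

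Consequently your argument establishes nothing about the intended object, and the substantive work is missing on both ends. For the upper bound $cci^{+}(\knx^{\mathfrak{A}_n})\leq \lfloor \frac{n}{2}\rfloor+1$ the paper builds a new iterated Ore extension $H_{2n+1}=F_{2n-3}[Q_{2n-1};\phi_{2n-1}][Q_{2n+1};\phi_{2n+1},\lambda_{2n+1}]$: note that $F_{2n-1}$ from Theorem \ref{zzthm3.14} is \emph{not} reused; the top odd power sum $P_{2n-1}$ is dropped and replaced by two new generators mapping to $x_1\cdots x_n$ and $\mathcal{O}_{\mathfrak{A}_n}(x_1\cdots x_{n-1})$, and one then checks (as in Lemma \ref{zzlem4.13}) that $\lambda_{2n+1}$ is a $\phi_{2n+1}$-derivation and that $\{u_s\}_{s=\lfloor \frac{n-1}{2}\rfloor+2}^{\,n-1}\cup\{u_{n+1},u_{n+2}\}$, with $u_{n+1}=Q_{2n-1}^2-b_2$ and $u_{n+2}=Q_{2n+1}^2-b_1$, is a central regular sequence of length exactly $\lfloor \frac{n}{2}\rfloor+1$ whose quotient is $\knx^{\mathfrak{A}_n}$. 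Theorem \ref{zzthm4.15} alone gives only the far weaker bound $cci^{+}\leq n+1$, so this reduction is genuinely needed. For the lower bound one must rewrite $H_{\knx^{\mathfrak{A}_n}}(t)$ in the reduced form of Definition \ref{zzdef3.13}(2) and count $\lfloor \frac{n}{2}\rfloor$ numerator factors to obtain $cyc(\knx^{\mathfrak{A}_n})=\lfloor \frac{n}{2}\rfloor$; your citation of Theorem \ref{zzthm3.14} does not apply, as that theorem concerns $\mathfrak{S}_n$. (Your one-line justification of the general inequality $cyc\leq cci^{+}$ is fine and matches the remark after Definition \ref{zzdef3.13}.) Finally, your closing comment that the bound $\lfloor \frac{n}{2}\rfloor+1$ ``is not sharp'' is exactly backwards for the intended theorem: whether $cci^{+}(\knx^{\mathfrak{A}_n})$ equals $\lfloor \frac{n}{2}\rfloor$ or $\lfloor \frac{n}{2}\rfloor+1$ is left open by the paper, which is precisely why the conclusion is stated as a two-sided estimate rather than an equality.
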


\begin{proof} First we prove the claim that $cci^+(\knx^{\mathfrak{S}_n})
\leq \lfloor \frac{n}{2} \rfloor+1$.

Following the proof of Theorem \ref{zzthm3.14}, let $C_2$ be the
subalgebra of $\knx^{\mathfrak{S}_n}$ defined before Lemma \ref{zzlem2.4},
which is (isomorphic to) the iterated Ore extension
$$k[P_4,P_8,\cdots, P_{4 \lfloor \frac{n}{2}\rfloor}]
[P_1][P_3;\tau_{3},\delta_3]\cdots [P_{n'};\tau_{n'},\delta_{n'}]$$
where $n'=2\lfloor \frac{n-1}{2}\rfloor+1$. Let $F_{2n-3}$ be the
iterated Ore extension defined in the proof of Theorem
\ref{zzthm3.14}. (We are not going to use $F_{2n-1}$, instead we
will define two new algebras $H_{2n-1}$ and $H_{2n+1}$.) Recall from
the proof of Theorem \ref{zzthm4.15} that $p_i$ is the image of
$P_{2i}$ for all $i=1,\cdots,n$. By Lemma \ref{zzlem2.4}(5),
$P_{2i}$ are in $C_2$ for all $i$. Define
$H_{2n-1}=F_{2n-3}[Q_{2n-1}; \phi_{2n-1}]$ where $\phi_{2n-1}:
P_{i}\mapsto (-1)^{i(n-1)}P_i$ for all even $i$ and all odd $i\leq
2n-3$ if $P_{i}$ appeared in $F_{2n-3}$. It is easy to check that
$\phi_{2n-1}$ is an algebra automorphism of $F_{2n-3}$ and therefore
$H_{2n-1}$ is an iterated Ore extension. Define
$H_{2n+1}=H_{2n-1}[Q_{2n+1};\phi_{2n+1},\lambda_{2n+1}]$ where
$\phi_{2n+1}$ is an algebra automorphism determined by
$\phi_{2n+1}:\begin{cases} P_{i}\mapsto (-1)^{in}P_i &
{\text{for even $i$ or odd $i\leq 2n-3$}}\\
Q_{2n-1}\mapsto (-1)^{n+1} Q_{2n-1} & \end{cases}$ (see the proof
of Lemma \ref{zzlem4.13}(1)), and
$\phi_{2n+1}$-derivation $\lambda_{2n+1}$ is determined by
$$\lambda_{2n+1}:\begin{cases} P_{i}\mapsto 0 &
{\text{if $i$ is even and $i\leq 2n$}}\\
P_{i}\mapsto (-1)^{n+1}2n Q_{2n-1}f_{2i-2}(P_2,\cdots,P_{2n})&
{\text{if $i$ is odd and $i\leq 2n-3$}}\\
Q_{2n-1}\mapsto 0 & \end{cases},$$ where $f_{2i-2}$ is given by
\eqref{E3.7.1}. Similar to the proof of Lemma \ref{zzlem4.13}(2),
one can show that $\lambda_{2n+1}$ is a $\phi_{2n+1}$-derivation,
therefore $H_{2n+1}$ is an iterated Ore extension. Let
$u_s=P_{2s-1}^2-P_{4s-2}$ for all integers from $s=\lfloor
\frac{n-1}{2}\rfloor+2$ to $s=n-1$. Let $u_{n+1}$ be
$Q_{2n-1}^2-b_2$ where $b_2\in C_1\subset C_2$ is defined in
\eqref{E4.13.2}. Let $u_{n+2}$ be $Q_{2n+1}^2-b_1$ where $b_1\in
C_1\subset C_2$ is defined in \eqref{E4.13.1}.

The proof of Lemma \ref{zzlem3.11} (see also Lemma \ref{zzlem4.14})
shows that
$$\{u_{\lfloor \frac{n-1}{2}\rfloor+2}, \cdots, u_{n-1},
u_{n+1},u_{n+2}\}$$ is a central regular sequence of $H_{2n+1}$.
It is straightforward to see that
$$H_{2n+1}/(u_{\lfloor \frac{n-1}{2}\rfloor+2}, \cdots, u_{n-1},
u_{n+1},u_{n+2})\cong \knx^{\mathfrak{S}_n}.$$
Therefore $cci^+(\knx^{\mathfrak{S}_n})\leq
n+1-(\lfloor \frac{n-1}{2}\rfloor+1)= \lfloor \frac{n}{2}\rfloor+1$ and
we proved the claim.

By Theorem \ref{zzthm4.15}
$$\begin{aligned}
H_{\knx^{\mathfrak{A}_n}}(t)&= H_{\knx^{\mathfrak{S}_n}}(t)
\frac{(1+t^n)(1+t^{n-1})}{1+t^{2n-1}}\\
&=\frac{\prod_{s=\lfloor \frac{n-1}{2}\rfloor+2}^n (1-t^{4s-2})}
{\prod_{j=1}^{\lfloor \frac{n}{2}\rfloor}(1-t^{4j}) \prod_{i=1}^n
(1-t^{2i-1})}\;
\frac{(1-t^{2n-1})(1-t^{2n})(1-t^{2(n-1)})}{(1-t^{4n-2})(1-t^n)(1-t^{n-1})}\\
&=\frac{\prod_{s=\lfloor \frac{n-1}{2}\rfloor+2}^{n-1} (1-t^{4s-2})}
{\prod_{j=1}^{\lfloor \frac{n}{2}\rfloor}(1-t^{4j}) \prod_{i=1}^{n-1}
(1-t^{2i-1})}\;
\frac{(1-t^{2n})(1-t^{2(n-1)})}{(1-t^n)(1-t^{n-1})}\\
&=\frac{\prod_{s=\lfloor \frac{n-1}{2}\rfloor+2}^{n-1} (1-t^{4s-2})}
{\prod_{j=1}^{\lfloor \frac{n}{2}-1\rfloor}(1-t^{4j}) \prod_{i=1}^{n-1}
(1-t^{2i-1})}\;
\frac{(1-t^{2(2\lfloor\frac{n-1}{2}\rfloor+1)})}{(1-t^n)(1-t^{n-1})}
\end{aligned}
$$
which is an expression satisfying the condition in Definition
\ref{zzdef3.13}(2). Hence
$$\lfloor \frac{n}{2}\rfloor=cyc(\knx^{\mathfrak{S}_n})
\leq cci^{+}(\knx^{\mathfrak{S}_n})\leq \lfloor \frac{n}{2}\rfloor+1.$$
\end{proof}

\begin{question}\label{zzque4.17}
Let $A$ be either $\knx^{\mathfrak{S}_n}$ and
$\knx^{\mathfrak{A}_n}$. Let $E(A)$ be the $\Ext$-algebra
$\Ext^*_A(k,k)$.
\begin{enumerate}
\item
Is $E(A)$ noetherian?
\item
What is the GK-dimension of $E(A)$?
\end{enumerate}
\end{question}

\section{Converse of Kac-Watanabe-Gordeev Theorem}
\label{xxsec5}

Kac-Watanabe-Gordeev showed that when $\kpx^G$ is a complete
intersection then $G$ must be generated by classical bireflections.
We next prove the converse of this result for $\knx^G$ when
$G\subset \mathfrak{S}_n$ and note that the converse is not true for
$\kpx^G$. By Lemma \ref{xxlem1.7}(3) a quasi-bireflection must be a
2-cycle or a 3-cycle. We conclude by showing that for subgroups $G$
of $\mathfrak{S}_4$ acting on $k_{-1}[x_1, x_2, x_3, x_4]$, the
fixed subring $k_{-1}[x_1, x_2, x_3, x_4]^G$ is a cci if and only if
$G$ is generated by quasi-bireflections, and when $G$ is not
generated by quasi-bireflections, $k_{-1}[x_1, x_2, x_3, x_4]^G$ is
not cyclotomic Gorenstein, hence $k_{-1}[x_1, x_2, x_3, x_4]^G$ is
not any of the kinds of complete intersections described in
Definition \ref{xxdef1.8}. The following result on permutation
groups may be well-known, but is included for completeness.

\begin{proposition}
\label{xxpro5.1}
Let $G$ be a subgroup of $\mathfrak{S}_n$.
\begin{enumerate}
\item
If $G$ is generated by 3-cycles, then $G$ is an
internal direct product of alternating groups.
\item
If $G$ is generated by 3-cycles and 2-cycles, then
$G$ is an internal direct product of
alternating and symmetric groups.
\end{enumerate}
\end{proposition}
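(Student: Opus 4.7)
The plan is to reduce the proposition to a local statement on each non-singleton orbit of $G$, then prove that statement by induction. The first step is an orbit-wise decomposition. Any $3$-cycle (or $2$-cycle) generator has its support contained in a single $G$-orbit on $\{1,\dots,n\}$, because the three (or two) points in the cycle are already interchanged by that generator. Let $O_1,\dots,O_k$ be the non-singleton orbits of $G$, and let $G_i$ be the subgroup generated by those generators whose support lies in $O_i$. Each $G_i$ fixes every point outside $O_i$; in particular, distinct $G_i$, $G_j$ act on disjoint sets, and therefore commute elementwise and intersect trivially. Since every generator of $G$ lies in some $G_i$, the product $G_1 G_2 \cdots G_k$ equals $G$, and these three properties exhibit $G$ as an internal direct product $G = G_1 \times \cdots \times G_k$.

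Via this decomposition, the proposition is reduced to the following \emph{Key Lemma}: if $H \leq \mathfrak{S}_m$ is transitive and generated by a set $T$ of $3$-cycles, then $H = \mathfrak{A}_m$; if $T$ consists of $3$-cycles and at least one $2$-cycle, then $H = \mathfrak{S}_m$. Transitivity of each $G_i$ on $O_i$ is automatic: the $G_j$ with $j\neq i$ fix $O_i$ pointwise, so the $G_i$-orbits on $O_i$ coincide with the $G$-orbits on $O_i$, which is all of $O_i$.

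To prove the Key Lemma I would induct on $|T|$, adjoining one generator $\tau$ at a time to a subgroup $K$ that inductively acts as $\mathfrak{A}_{S'}$ or $\mathfrak{S}_{S'}$ on its current support $S'$. Transitivity of the eventual $H$ (equivalently, connectedness of the support-hypergraph built from $T$) guarantees $\mathrm{supp}(\tau)\cap S'\neq\emptyset$. Case-splitting on the overlap size handles the inductive step. If $\tau$ is a $3$-cycle with three-point overlap, then $\tau\in K$ already (a $3$-cycle is even). If the overlap has size two, say $\mathrm{supp}(\tau)=\{a,b,c\}$ with $a,b\in S'$ and $c\notin S'$, then $2$-transitivity of $\mathfrak{A}_{S'}$ (valid for $|S'|\geq 4$) conjugates $\tau$ into every $3$-cycle $(x,y,c)$ with $x,y\in S'$, and together with $\mathfrak{A}_{S'}$ this generates $\mathfrak{A}_{S'\cup\{c\}}$; the base case $|S'|=3$ is verified directly via the identity $(a,b,c)(a,b,d)^{-1}=(a,d,c)$, which shows $\langle(a,b,c),(a,b,d)\rangle=\mathfrak{A}_{\{a,b,c,d\}}$. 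For overlap size one, $\mathrm{supp}(\tau)=\{a,b,c\}$ with $a\in S'$ and $b,c\notin S'$, I would first conjugate $\tau$ by $K$ to obtain $(a',b,c)$ for every $a'\in S'$, then use the explicit identity $(a,b,c)(a',c,b)=(a,b,a')$ to extract a $3$-cycle with support $\{a,b,a'\}\subseteq S'\cup\{b\}$, reducing to the previously handled size-two case (first enlarging $S'$ to $S'\cup\{b\}$, then $S'\cup\{b,c\}$). When $\tau$ is a $2$-cycle, an analogous conjugation-plus-product argument yields enough transpositions to invoke the classical fact that a connected graph of transpositions on $m$ vertices generates $\mathfrak{S}_m$, either upgrading $\mathfrak{A}_{S'}$ to $\mathfrak{S}_{S'}$ or extending $\mathfrak{S}_{S'}$ to $\mathfrak{S}_{S'\cup\mathrm{supp}(\tau)}$.

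The direct-product reduction is essentially formal; the main technical burden is the Key Lemma. The trickiest points there are the small-support base cases, particularly $|S'|=3$ where $2$-transitivity of $\mathfrak{A}_{S'}$ is unavailable and explicit commutator computations are needed, and the careful bookkeeping in the mixed $2$-cycle/$3$-cycle case to ensure the correct $\mathfrak{A}$-versus-$\mathfrak{S}$ outcome depending on whether some $2$-cycle has appeared among the generators of that orbit.
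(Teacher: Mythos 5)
Your proposal is correct and follows essentially the same route as the paper: decompose $G$ over the connected components of the generators' supports (your non-singleton orbits $O_i$ coincide with the paper's path-connected components), then grow each factor one generator at a time with a case split on how the new cycle's support meets the current support, so that the paper's Cases 1.1/1.2 and 2.1/2.2 are exactly your overlap-size cases, differing only in local mechanics (the paper manufactures products of disjoint transpositions and a normal-closure argument where you use $2$-transitivity and conjugation). One small polish: pairwise trivial intersections plus elementwise commuting do not in general yield an internal direct product (consider three order-$2$ subgroups of the Klein four group), so you should instead note that $G_i \cap \prod_{j\neq i} G_j = \{1\}$, which follows at once from the same disjoint-support observation you already make, since any element of $\prod_{j\neq i} G_j$ fixes $O_i$ pointwise.
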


We first prove some lemmas. Let $X$ be any subset of
$\{i\}_{i=1}^n:=\{1,\cdots,n\}$. We use ${\mathfrak S}_{X}$ for the
full symmetric group of $X$.

\begin{proof}
Suppose that $G$ is generated by 3-cycles and 2-cycles.  We may
assume that $G=\langle \tau_1, \tau_2, \ldots , \tau_{\ell}\rangle $
where $\tau_1, \tau_2, \ldots , \tau_{\ell}$ are all of the 3-cycles
and 2-cycles in $G$. Let $X=\{1, 2, \ldots , n\}$.  We will show
that there are disjoint nonempty subsets $X_1, X_2, \ldots , X_k$ of
$X$ such that $G=G_1\times G_2 \times \cdots \times G_k$ where $G_i$
is the alternating or symmetric group on $X_i$. Given a permutation
$\sigma$ define $M(\sigma) = \{x \in X: \sigma(x) \neq x\}$, the set
of elements that are moved by $\sigma$.  Let ${\displaystyle Y =
\bigcup_{\sigma \in G} M(\sigma)}$ and define a relation $\sim$ on
$Y$ by $x\sim y$ if there exists 3-cycles and/or 2-cycles $\sigma_1,
\sigma_2, \ldots , \sigma_m$ such that $x\in M(\sigma_1), y \in
M(\sigma_m)$ and $M(\sigma_i)\cap M(\sigma_{i+1}) \neq \emptyset$
for $i = 1, 2, \ldots ,m-1.$ In this case we say that there is a
path from $x$ to $y$. It is easy to see that $\sim$ is an
equivalence relation on $Y$. Let $X_1, X_2, \ldots ,X_k$ be the
equivalence classes. We view the $X_i$ as the path connected
components of $Y$. Clearly either $M(\tau_j) \subseteq X_i$ or
$M(\tau_j)\cap X_i = \emptyset$ for all $i,j$. Let $G_i = \langle
\tau_j: M(\tau_j) \subseteq X_i \rangle$.

Case 1: Suppose that $G$ is generated by 3-cycles.  It will be
sufficient to show that each $G_i$ is an alternating group.
Furthermore, there is no loss of generality in assuming that there
is one component $Y$.  We will induct on $\ell$.  If $|Y|=3$, (the
smallest possible) then $G = \langle \tau \rangle \cong A_3.$ If
$|Y|=4$, we may assume that $Y=\{1,2,3,4\}, \tau_1=(1,2,3)$ and
$\tau_2=(2,3,4)$. In this case $|\langle \tau_1\rangle \langle
\tau_2\rangle| =9$ and $G$ must be all of $A_4$. Inductively assume
that whenever $G=\langle \tau_1, \tau_2, \ldots ,\tau_{\ell}\rangle$
has one component $Y$ with $4\leq |Y|=s\leq n$, then $G\cong A_s$.
We may let $Y=\{1, 2, \ldots , s\}.$  Now suppose that $G' = \langle
\tau_1, \tau_2, \ldots , \tau_{\ell+1}\rangle $ where
$\tau_{\ell+1}$ is a 3-cycle, and $\displaystyle Y' =
\bigcup_1^{\ell+1} M(\tau_{j})$ is connected. Let $\tau_{i_1},
\tau_{i_2}, \ldots , \tau_{i_m}$ be a maximal path in $Y'$.  Then
$\displaystyle \cup_{j\neq i_m} M(\tau_j)$ must be connected, for
otherwise, we could extend the path.  Hence there is no loss of
generality in assuming that $\tau_{\ell+1}$ is such that
$\displaystyle Y= \bigcup_{i \neq \ell+1} M(\tau_i)$ is connected
with $|Y| = s$. Let $G = \langle \tau_1, \tau_2, \ldots
,\tau_{\ell}\rangle$. There are two subcases.

Case 1.1: $|Y'| = s+1.$  We may assume, renumbering if necessary,
that $\tau_{\ell+1} = (s-1,s,s+1).$ We will show that $G'$ contains
all elements that are products of two disjoint 2-cycles.  The set of
all such generates a normal subgroup of $A_{s+1}$, and hence we
would have  $G' = A_{s+1}.$ By induction we have all disjoint
products $(i,j)(k,\ell)$ where $i, j, k, \ell \leq s.$ If $i,j \leq
s-1$, then $(i,j)(s-1,s)(s-1,s,s+1) = (i,j)(s,s+1)$. Then
$(s-1,k)(i,j)\cdot (i,j)(s,s+1) = (s-1,k)(s,s+1).$  The conjugation
$(k,s,\ell)(i,j)(s,s+1)(k,\ell,s) = (i,j)(\ell,s+1)$ gives the
remaining products.  Thus $G'=A_{s+1}$ and the result follows by
induction.

Case 1.2: $|Y'|=s+2.$ We may assume that $\tau_{\ell+1} = (s, s+1,
s+2).$ By induction $G$ is $A_s$ and we have the following chain
from $1$ to $s-1$:
\[ (1,2,3), (2,3,4), \ldots , (s-3, s-2, s-1). \]
Computing
\[ (1,2)(s-1,s)(s,s+1,s+2)(1,2)(s-1,s) = (s-1,s+1,s+2), \]
and $(s-1,s+1,s+2) \in G'.$  We have that $Y'' = \{1, 2, \ldots ,
s-1\}\cup \{s+1,s+2\}$ is a connected component, and by induction
$G'' = \langle A_{s-1}, (s-1,s+1,s+2) \rangle$ is a copy of
$A_{s+1}.$ Then $G=\langle G'', \tau_{\ell+1}\rangle$ is the
alternating group $A_{s+2}$ by Case 1.1.

Case 2: Once again there is no loss of generality in assuming that
there is one connected component.   We may also suppose that $G$
contains at least one 2-cycle by Case 1.  Again the proof is by
induction on $\ell$. If $|Y| =2,$ the result is clear. Since
$(1,2)(2,3) = (1,2,3)$, we see that if $|Y|=3$, then $G=S_3.$
Inductively assume that whenever $G = \langle \tau_1, \tau_2, \ldots
, \tau_{\ell}\rangle$ with $3\leq |Y| \leq n$ then $G$ is a
symmetric group.  Now suppose that $G' = \langle \tau_1, \tau_2,
\ldots , \tau_{\ell+1}\rangle$ with $\displaystyle Y' =
\bigcup_1^{\ell+1} M(\tau_j)$ connected.  Again we may assume that
$\displaystyle Y=\bigcup_{j\neq \ell+1}M(\tau_j)$ is connected with
$|Y|=s \leq n$.  Then by induction or by case 1 we have that $G
=\langle \tau_1, \tau_2, \ldots ,\tau_{\ell}\rangle$ is either a
symmetric group or an alternating group (if all $\tau_i$ for $i \leq
\ell$ are 3-cycles). We have two subcases.

Case 2.1: $\tau_{\ell+1}$ is a 3-cycle.  By the argument in Case 1,
$G'$ contains the full alternating group.  Since $G'$ must also
contain a 2-cycle, it is the full symmetric group.

Case 2.2: $\tau_{\ell+1}$ is a 2-cycle.  Without loss of generality
we may assume that $\tau_{\ell+1} = (s, s+1).$  As noted, $G$ is
either the symmetric group or the alternating group.  In this case
$G'$ must contain
\[ (1,2)(s-1,s)(s,s+1) = (1,2)(s-1,s, s+1). \]
Squaring yields that $(s-1, s+1, s) \in G'$.  By Case 1.1, $G'$
contains the full alternating group. Since it also contains a
2-cycle, it must be the full symmetric group.

The result follows by induction.
\end{proof}

Let $A$ and $B$ be two graded algebra. Define $A\otimes_{-1} B$ be
the ${\mathbb Z}^2$-graded twist of the tensor product $A\otimes B$
by the twisting system
$$\sigma:=\{\sigma_{i,j}=Id^{i} \xi_{-1}^j \mid (i,j)\in {\mathbb
Z}^2\}$$ where $\xi_{-1}$ maps $a\otimes b\mapsto (-1)^{|a|+|b|}
a\otimes b$ for all $a\otimes b\in A\otimes B$. The following lemmas
are easy to check.

\begin{lemma}
\label{xxlem5.2} Retain the above notation.
\begin{enumerate}
\item
$A\otimes_{-1} B=A\otimes B$ as ${\mathbb Z}^2$-graded vector spaces.
\item
Identifying $A$ with $A\otimes 1\subset A\otimes_{-1} B$ and
identifying $B$ with $1\otimes B\subset A\otimes_{-1} B$. Then
$A$ and $B$ are subalgebras of $A\otimes_{-1} B$,  and the algebra
$A\otimes_{-1} B$ is equal to the vector space generated by the products
$AB$ {\rm{(}}and $BA$ respectively{\rm{)}}.
\item
Under the identification in part (2),
$a b=(-1)^{|a|\; |b|} b a$ for all $a\in A$ and $b\in B$.
\end{enumerate}
\end{lemma}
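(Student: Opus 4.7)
The plan is to verify each of the three parts by a direct computation from the definition of the $\mathbb{Z}^2$-graded Zhang twist, writing $*$ for the twisted multiplication on $A\otimes_{-1}B$ and $\cdot$ for the untwisted multiplication on $A\otimes B$. The governing formula is
\[
(u\otimes v)*(u'\otimes v')\;=\;(u\otimes v)\cdot\sigma_{(|u|,|v|)}(u'\otimes v'),
\]
with $\sigma_{(i,j)}=\mathrm{Id}^{i}\xi_{-1}^{j}$ and $\xi_{-1}(a\otimes b)=(-1)^{|a|+|b|}(a\otimes b)$.

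Part (1) is essentially tautological: a Zhang twist, by construction, leaves the underlying $\mathbb{Z}^2$-graded vector space unchanged and only alters the product. So there is nothing to verify beyond invoking the definition.

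For part (2), I would first check that $A\otimes 1$ and $1\otimes B$ are subalgebras isomorphic to $A$ and $B$ respectively. For elements of $A\otimes 1$ the second grading index is zero, so $\sigma_{(|a|,0)}=\xi_{-1}^{0}=\mathrm{Id}$, giving $(a\otimes 1)*(a'\otimes 1)=(a\otimes 1)\cdot(a'\otimes 1)=aa'\otimes 1$. For elements of $1\otimes B$ the twist $\sigma_{(0,|b|)}=\xi_{-1}^{|b|}$ fixes $1\otimes b'$ (since $\xi_{-1}(1\otimes b')=(-1)^{|b'|}(1\otimes b')$, but we are multiplying with $1\otimes b$ whose first-tensor-factor grading is $0$, making signs cancel correctly). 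Then from
\[
(a\otimes 1)*(1\otimes b)\;=\;(a\otimes 1)\cdot\sigma_{(|a|,0)}(1\otimes b)\;=\;(a\otimes 1)\cdot(1\otimes b)\;=\;a\otimes b
\]
I conclude that every basis element $a\otimes b$ of $A\otimes_{-1}B$ lies in the linear span of products $AB$; the corresponding statement for $BA$ will follow from part (3).

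For part (3), the key computation is the reversed product:
\[
(1\otimes b)*(a\otimes 1)\;=\;(1\otimes b)\cdot\sigma_{(0,|b|)}(a\otimes 1)\;=\;(1\otimes b)\cdot\xi_{-1}^{|b|}(a\otimes 1).
\]
Iterating $\xi_{-1}(a\otimes 1)=(-1)^{|a|}(a\otimes 1)$ exactly $|b|$ times yields $\xi_{-1}^{|b|}(a\otimes 1)=(-1)^{|a|\,|b|}(a\otimes 1)$, so
\[
(1\otimes b)*(a\otimes 1)\;=\;(-1)^{|a|\,|b|}(a\otimes b)\;=\;(-1)^{|a|\,|b|}\bigl((a\otimes 1)*(1\otimes b)\bigr),
\]
which under the identifications of part (2) is exactly $ba=(-1)^{|a|\,|b|}ab$.

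There is no serious obstacle; the only subtlety is bookkeeping about the convention for the Zhang twist (on which factor $\sigma$ acts, and which grading index it reads). Once that convention is pinned down as in the excerpt, all three parts reduce to the elementary sign calculation with $\xi_{-1}^{|b|}$, and the associativity of $*$ (which is automatic for any Zhang-twist) is not explicitly needed for the stated claims.
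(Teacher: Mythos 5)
The paper itself offers no argument for this lemma (it introduces it with ``The following lemmas are easy to check''), so your direct verification from the twisting system is exactly the intended kind of proof, and parts (1) and (3) of your write-up are correct: in $(1\otimes b)*(a\otimes 1)$ the second tensor factor of $a\otimes 1$ sits in degree $0$, so $\xi_{-1}^{|b|}(a\otimes 1)=(-1)^{|a|\,|b|}(a\otimes 1)$ under the paper's definition, and the sign $(-1)^{|a|\,|b|}$ comes out as claimed.

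There is, however, a genuine slip in part (2), at precisely the convention-sensitive spot you flagged at the end. With the paper's literal definition $\xi_{-1}(a\otimes b)=(-1)^{|a|+|b|}(a\otimes b)$, the twist applied in the product $(1\otimes b)*(1\otimes b')$ is $\xi_{-1}^{|b|}$, and $\xi_{-1}^{|b|}(1\otimes b')=(-1)^{|b|\,|b'|}(1\otimes b')$: this does \emph{not} fix $1\otimes b'$ when $|b|$ and $|b'|$ are both odd, which is exactly the relevant case here since $B=k_{-1}[x_{m+1},\ldots,x_n]$ is generated in degree $1$. Your parenthetical ``the first-tensor-factor grading is $0$, making signs cancel correctly'' is not an argument: the exponent of $\xi_{-1}$ is the \emph{second} grading index $|b|$, while the first index only feeds $\mathrm{Id}^i$, so nothing cancels. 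Under the literal definition one gets $(1\otimes b)*(1\otimes b')=(-1)^{|b|\,|b'|}(1\otimes bb')$; hence $1\otimes B$ is still a subalgebra (it is closed under $*$), but the naive identification $b\mapsto 1\otimes b$ is not an algebra map. The gap is repaired in either of two short ways: (i) read $\xi_{-1}$ as $(-1)^{|a|}$ (first-factor degree only), the evidently intended convention, under which $A\otimes_{-1}B$ is the usual graded (super) tensor product and your computation goes through verbatim; or (ii) keep the literal definition and identify $B$ with $1\otimes B$ via the rescaling $b\mapsto(-1)^{\binom{|b|}{2}}(1\otimes b)$, which is an algebra isomorphism by the identity $\binom{p+q}{2}=\binom{p}{2}+\binom{q}{2}+pq$. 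Part (3) is insensitive to this choice, and the downstream application in Lemma \ref{xxlem5.3}(1) is also unaffected, since the extra sign appears on both sides of the relation $x_ix_j=-x_jx_i$ and cancels.
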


\begin{lemma}
\label{xxlem5.3} Let $m<n$.
\begin{enumerate}
\item
$k_{-1}[x_1,\cdots,x_m]\otimes_{-1} k_{-1}[x_{m+1},\cdots, x_n]
\cong k_{-1}[x_1,\cdots,x_{n}]$.
\item
If $G_1\subset \Aut(A)$ and $G_2\subset \Aut(B)$, then
$(A\otimes_{-1} B)^{G_1\times G_2}=A^{G_1}\otimes_{-1} B^{G_2}$.
\item\cite[Lemma 2.7]{KKZ3}
If $A$ and $B$ are AS regular, then so is $A\otimes_{-1} B$.
\item
Suppose $A=R/(\Omega_{1},\cdots, \Omega_{m})$ and
$B=C/(f_1,\cdots,f_d)$ where $R$ and $C$ are AS regular and
$\{\Omega_i\}_{i=1}^m$ and $\{f_j\}_{j=1}^d$ are regular normal
sequences of positive even degrees. If $R\otimes C$ is noetherian,
then $A\otimes_{-1} B$ is a factor ring of a noetherian AS regular
algebra modulo a regular normal sequences of positive even degrees.
As a consequence, $A\otimes_{-1} B$ is a cci.
\end{enumerate}
\end{lemma}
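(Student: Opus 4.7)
The plan is to handle the four parts in order, with parts (1)--(3) being essentially bookkeeping and part (4) carrying the main content. For part (1), both sides are generated by $x_1,\ldots,x_n$ in degree $1$; the $(-1)$-skew relations within each half are inherited from the factors, and for $i\le m<j$ Lemma \ref{xxlem5.2}(3) gives $x_ix_j=(-1)^{1\cdot 1}x_jx_i=-x_jx_i$, matching the defining relation of $k_{-1}[x_1,\ldots,x_n]$; a dimension count on each graded piece confirms the natural map is an isomorphism. For part (2), by Lemma \ref{xxlem5.2}(1) the underlying bigraded vector space of $A\otimes_{-1}B$ equals that of $A\otimes B$, and the $G_1\times G_2$-action respects this splitting, so $(A\otimes_{-1}B)^{G_1\times G_2}=A^{G_1}\otimes B^{G_2}$, which, reinterpreted via Lemma \ref{xxlem5.2}(1), is $A^{G_1}\otimes_{-1}B^{G_2}$. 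Part (3) is an immediate citation of \cite[Lemma 2.7]{KKZ3}.

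For part (4), I set $S:=R\otimes_{-1}C$. By part (3), $S$ is AS regular, and since $R\otimes C$ is noetherian by hypothesis and the twist is a Zhang twist (preserving the lattice of one-sided homogeneous ideals), $S$ is also noetherian. Define $\widetilde{\Omega}_i:=\Omega_i\otimes 1$ and $\widetilde{f}_j:=1\otimes f_j$ in $S$; each has the same positive even degree as its preimage. Because $|\widetilde{\Omega}_i|$ is even, Lemma \ref{xxlem5.2}(3) forces $\widetilde{\Omega}_i$ to commute with every homogeneous element of $C\hookrightarrow S$, and the normalizing automorphism of $\Omega_i$ in $R$ extends (via the identity on $C$) to a normalizing automorphism of $\widetilde{\Omega}_i$ in $S$. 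Hence $\widetilde{\Omega}_i$ is normal in $S$; symmetrically, so is $\widetilde{f}_j$.

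It remains to verify regularity of the sequence and to identify the quotient. Since $S=R\otimes C$ as a vector space with the obvious left $R$-action, $S$ is a free left $R$-module, so regularity of $\Omega_1$ in $R$ transfers to regularity of $\widetilde{\Omega}_1$ in $S$; normality of $\widetilde{\Omega}_1$ makes the two-sided ideal it generates equal to $(\Omega_1)\otimes C$, so $S/(\widetilde{\Omega}_1)\cong (R/(\Omega_1))\otimes_{-1}C$ as graded algebras. Iterating this step along the $\widetilde{\Omega}_i$ and then along the $\widetilde{f}_j$ produces
\[
S\big/\bigl(\widetilde{\Omega}_1,\ldots,\widetilde{\Omega}_m,\widetilde{f}_1,\ldots,\widetilde{f}_d\bigr)\;\cong\; A\otimes_{-1}B,
\]
which exhibits $A\otimes_{-1}B$ in the required form; the cci conclusion then follows from Definition \ref{xxdef1.8}(1). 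The main obstacle I anticipate is the bookkeeping ensuring that at each stage of the inductive quotient the remaining elements of the sequence are still normal and regular in the smaller algebra, for which the crucial input is precisely the even-degree hypothesis combined with Lemma \ref{xxlem5.2}(3); everything else reduces to standard behavior of regular sequences under free-module base change.
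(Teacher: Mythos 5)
Your proposal is correct, and it follows exactly the route the paper intends: the paper states this lemma without proof (``The following lemmas are easy to check''), relying implicitly on Lemma \ref{xxlem5.2}, the Zhang-twist description of $\otimes_{-1}$, and the same citation \cite[Lemma 2.7]{KKZ3} for part (3). Your write-up simply supplies the omitted details --- the Hilbert-series count for (1), the bigraded fixed-space identification for (2), and for (4) the key points that twisting preserves noetherianness, that the even-degree hypothesis plus Lemma \ref{xxlem5.2}(3) gives normality of $\Omega_i\otimes 1$ and $1\otimes f_j$, and that freeness of $R\otimes_{-1}C$ over the relevant factor transfers regularity --- all of which check out.
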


For any subset $X$ of $[1,\cdots,n]$, let $\mathfrak{S}_X$ denote
the symmetric group of $X$ (all permutations of $X$).

\begin{theorem}
\label{xxthm5.4} If $G$ is a subgroup of $\mathfrak{S}_n$ generated
by quasi-bireflections, then $\knx^G$ is a cci.
\end{theorem}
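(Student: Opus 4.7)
The plan is to combine the structural decomposition of $G$ (Proposition \ref{xxpro5.1}) with the twisted tensor product decomposition of $\knx^G$ (Lemma \ref{xxlem5.3}) and the fact that the factors $\knx^{\mathfrak{S}_n}$ and $\knx^{\mathfrak{A}_n}$ are already known to be cci's (Theorems \ref{zzthm3.12} and \ref{zzthm4.15}).

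First I would invoke Lemma \ref{xxlem1.7}(3): since $G$ is generated by quasi-bireflections lying in $\mathfrak{S}_n$, its generators are $2$-cycles and $3$-cycles. Then Proposition \ref{xxpro5.1}(2) applies, yielding disjoint subsets $X_1,\ldots,X_k$ of $\{1,\ldots,n\}$ and a product decomposition
\[
G \;=\; G_1 \times G_2 \times \cdots \times G_k,
\]
where each $G_i$ is either $\mathfrak{S}_{X_i}$ or $\mathfrak{A}_{X_i}$, acting only on the variables indexed by $X_i$ (and trivially elsewhere). I would also note that the variables not in $\bigcup_i X_i$ are fixed by all of $G$ and contribute a polynomial tensor factor that is harmless.

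Next I would apply Lemma \ref{xxlem5.3}(1) iteratively to write $\knx \cong k_{-1}[X_1] \otimes_{-1} k_{-1}[X_2] \otimes_{-1} \cdots \otimes_{-1} k_{-1}[X_k]$, and then Lemma \ref{xxlem5.3}(2) to get
\[
\knx^{G} \;\cong\; k_{-1}[X_1]^{G_1} \otimes_{-1} k_{-1}[X_2]^{G_2} \otimes_{-1} \cdots \otimes_{-1} k_{-1}[X_k]^{G_k}.
\]
By Theorems \ref{zzthm3.12} and \ref{zzthm4.15}, each tensor factor $k_{-1}[X_i]^{G_i}$ is a cci, i.e.\ a quotient of a noetherian AS regular algebra $R_i$ (an iterated Ore extension exhibited in those proofs) by a regular normal sequence.

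Finally I would apply Lemma \ref{xxlem5.3}(4) inductively on $k$ to conclude that the twisted tensor product is itself a cci. The main point to verify before invoking that lemma is its two hypotheses: that the defining regular normal sequences lie in \emph{positive even} degrees, and that the relevant $R_i \otimes R_j$ are noetherian. For the first, one inspects the explicit sequences from the proofs of Theorems \ref{zzthm3.12} and \ref{zzthm4.15}: they have degrees of the form $4i-2$, $2n$, $2n-2$, all positive and even, so this is immediate. For the second, the ambient AS regular algebras $R_i$ are iterated Ore extensions of commutative polynomial rings, and their (ordinary) tensor products over $k$ remain noetherian iterated Ore extensions, so the noetherian hypothesis holds; this is the only slightly technical point in the argument. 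Once both hypotheses are checked, Lemma \ref{xxlem5.3}(4) gives the result directly.
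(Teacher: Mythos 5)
Your proposal is correct and follows essentially the same route as the paper: the paper also combines Proposition \ref{xxpro5.1}, Theorems \ref{zzthm3.12} and \ref{zzthm4.15}, and Lemmas \ref{xxlem5.2} and \ref{xxlem5.3} to write $\knx^G\cong A^{G_1}\otimes_{-1}B^{G_2}$, differing only in that it inducts on $n$ by splitting off one symmetric or alternating factor at a time rather than taking the full direct product decomposition at once. Your explicit verification of the two hypotheses of Lemma \ref{xxlem5.3}(4) --- that the defining sequences from Theorems \ref{zzthm3.12} and \ref{zzthm4.15} have even degrees $4i-2$, $2n$, $2n-2$, and that tensor products of the ambient iterated Ore extensions remain noetherian --- is a point the paper leaves implicit, and is worth making since a bare ``cci'' hypothesis would not by itself guarantee the even-degree condition the lemma requires.
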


\begin{proof} We use induction on $n$. Suppose the assertion holds
for $G\subset \mathfrak{S}_m$ for all $m\leq n-1$. Now let $G$ be
a subgroup of $\mathfrak{S}_n$ generated by quasi-bireflections.
If $G$ is $\{1\}$, the assertion is trivial. If $G=\mathfrak{S}_n$
or $\mathfrak{A}_n$, the assertion follows from Theorems
\ref{zzthm3.12} and \ref{zzthm4.15}. Otherwise, by Proposition
\ref{xxpro5.1}, there is a disjoint union $X\cup Y=[1,\cdots,n]$
such that $G$ is a product of $G_1$ and $G_2$, where
$G_1$ and $G_2$ are subgroups $\mathfrak{S}_X$ and $\mathfrak{S}_Y$
respectively, and further $G_1$ is either $\mathfrak{S}_X$ or
$\mathfrak{A}_X$ and $G_2$ is generated by quasi-bireflections of
$k_{-1}[x_i\mid i\in Y]$ (or equivalently, $2$- or 3-cycles of
$\mathfrak{S}_Y$). By induction, both $A^{G_1}$ and $B^{G_2}$ are
cci, where $A=k_{-1}[x_i\mid i\in X]$ and $B=k_{-1}[x_i\mid i\in Y]$.
It follows from Lemma \ref{xxlem5.2} and \ref{xxlem5.3} that
$\knx^G\cong A^{G_1}\otimes_{-1}B^{G_2}$ is a cci.
\end{proof}

The following example shows that for $\kpx$
permutation groups generated by classical bireflections need not
have a fixed ring that is a complete intersection.

\begin{example}
\label{xxex5.5} Let $\mathfrak{S}_5$ act on $A := k[x_1, x_2, x_3,
x_4, x_5]$ by permuting the variables. Let $G =\langle (1,2)(3,4),
(2,3)(4,5) \rangle$. These two generators are classical
bireflections. Note that $(1,2)(3,4)\cdot (2,3)(4,5) = (1, 2, 4, 5,
3)$.  Calculating shows that $\langle (1,2)(3,4),
(1,2,4,5,3)\rangle$ is a copy of the dihedral group $D_5$ of order
$10$ and is in fact all of $G$.  Using Molien's Theorem we have
\begin{eqnarray*}
H_{A^G}(t)&=& \frac{1}{10}\left(\frac{1}{(1-t)^5}
+\frac{5}{(1-t)^3(1+t)^2} +\frac{4}{1-t^5}\right) \\
&=& \frac{t^6-t^5+2t^3-t+1}{(1-t)^2(1-t^2)^2(1-t^5)}.
\end{eqnarray*}
The numerator is an irreducible polynomial that is not cyclotomic;
in fact, none of its zeros are roots of unity.  Hence $A^G$
cannot be a complete intersection.
\end{example}

We conclude by computing the invariants of $A=k_{-1}[x_1, x_2. x_3,
x_4]$ under each of the subgroups of $\mathfrak{S}_4$. In this case
the conjectured generalization of the Kac-Watanabe-Gordeev Theorem
becomes both necessary and sufficient.  We show that $A^H$ is a
cci if and only if $H$ is generated by
quasi-bireflections (i.e. 2-cycles or 3-cycles); when $H$ is not
generated by quasi-bireflections $A^H$ is not cyclotomic
Gorenstein -- hence not any kind of complete intersection by Theorem
\ref{xxthm1.10}.

\begin{example}
\label{xxex5.6} For the following subgroups $H$ of $\mathfrak{S}_4$
we consider the fixed subring $A^H$. We show that $A^H$ is
either a cci or not cyclotomic
Gorenstein (and hence none of the kinds of complete intersection we
considered in Definition \ref{xxdef1.8}).
\begin{itemize}
\item
If $H$ is the full symmetric group or the alternating group, both
generated by quasi-bireflections, we have shown that $A^H$ is a
cci. Similarly, cyclic subgroups
generated by a 2-cycle (so isomorphic to $\mathfrak{S}_2$) or by a
3-cycle (so isomorphic to $\mathfrak{A}_3$) are also easily seen to
give ccis when they act on
$A=k_{-1}[x_1, x_2, x_3, x_4]$ (we showed they did when they acted
on  $A=k_{-1}[x_1, x_2]$ and $A=k_{-1}[x_1, x_2, x_3]$ and the
results extend by fixing the remaining variable(s)).
\item
Let $H$ be the subgroup of order 2 generated by an element that is a
product of two disjoint 2-cycles, e.g. $(12)(34)$; this subgroup is
not generated by quasi-bireflections of $A$ (it is generated by a
bireflection of $k[x_1, x_2, x_3, x_4]$). Molien's Theorem shows
that the Hilbert series of $A^H$ is
$$\frac{1-2t+4t^2-2t^3+t^4}{(1-t)^4 (1+t^2)^2}$$
which has zeros that are not roots of unity.  Hence $A^H$ is not
cyclotomic Gorenstein.
\item
We have already noted (Example \ref{xxex1.6}) that the subgroup $H$
generated by a 4-cycle is not generated by quasi-bireflections, and
that the invariants $A^H$ are not cyclotomic Gorenstein.
\item
Let $H$ be the Klein-Four subgroup generated by two disjoint
2-cycles (e.g. $H = \langle (12), (34) \rangle$).  Then $H$ is
generated by quasi-bireflections of $A$, the generators of $A^H$
are $x_1+x_2, x_3+x_4, x_1^3 +x_2^3, x_3^3+x_4^3$, Hilbert series of
$A^H$ is
$$\frac{1-t+t^2}{(1-t)(1+t^2)^2},$$
and
$$A^H \cong \frac{k[p_1, p_2, q_1,q_2][y_1][y_2; \tau_1, \delta_1]
[z_1; \tau_2, \delta_2][z_2;\tau_3, \delta_3] }{\langle y_1^2-a_1,
y_2^2-a_2,z_1^2 - b_1, z_2^2-b_2 \rangle},$$
where $p_1$, $p_2$
(resp., $q_1, q_2$) correspond to the first two symmetric
polynomials in $x_1^2, x_2^2$ (resp., $x_3^2, x_4^2$), $y_1, y_2$
(resp., $z_1, z_2$) correspond to $x_1+x_2, x_1^3+x_2^3$ (resp.,
$x_3+x_4, x_3^3+x_4^3$).
\item
The Klein-Four subgroup of even permutations
$$H= \{1, (12)(34), (13)(24), (14)(23)\},$$
which is not generated by quasi-bireflections
of $A$.  The Hilbert series of $A^H$ is
$$\frac{1-3t+5t^2-3t^3+t^4}{(1-t)^4(1+t^2)^2},$$
so $A^H$ is not cyclotomic Gorenstein.
\item
Let $H$ be a subgroup $\mathfrak{S}_4$ of order 6.  Then $H$ is
isomorphic to the symmetric group $\mathfrak{S}_3$, without loss of
generality of the form $H = \langle (123),(12) \rangle$.  This group
is generated by quasi-bireflections, and $A^H$ is a complete
intersection (we showed this for $k_{-1}[x_1, x_2, x_3]$ and the
extension to $A$ is not difficult).
\item
Let $H$ be a dihedral group of order 8 (a Sylow-2 subgroup of
$\mathfrak{S}_4$).  Then $H$ is of the form
$$D_4 = \{1, (1234), (13)(24), (1432), (13),
(24), (12)(34), (14)(23) \},$$
so not generated by quasi-bireflections.  The Hilbert series
of the fixed subring is
$$\frac{1-3t+5t^2-5t^3+5t^4-5t^5+5t^6-3t^7+t^8
}{(1-t)^4(1+t^4)(1+t^2)^2}$$
$$=\frac{(1-t+t^2)(1-2t+2t^2-t^3+2t^4-2t^5+t^6)
}{(1-t)^4(1+t^4)(1+t^2)^2}$$
so $A^H$ is not cyclotomic Gorenstein.
\end{itemize}

Note:  It might be nice to know degrees of generators and how they
compare to $n^2 = 16$.
\end{example}

\begin{question}\label{xxque5.7}
For $H$ a subgroup of $\mathfrak{S}_n$, is $\knx^H$ a cci if and
only if $H$ is generated by quasi-bireflections?
\end{question}

\section{Appendix}
\label{xxsec6}

In this section we find generating functions for the class of
restricted partitions having no repeated odd parts and the class
having no repeated even parts.  It is included since we were unable
to find them in the literature.

Let $d_n(k)$ be the number of partitions of $k$ with at most $n$
parts having no repeated odd parts.  Make the convention that
$d_n(1)= 1$ and $d_n(\ell) =0$ for $\ell < 0.$  Let $D_n(t)$ be the
corresponding generating function
\[ D_n(t) = \sum_{k=0}^{\infty} d_n(k) t^k. \]
 There is only one way to partition $k$ into $1$ part, so
\begin{eqnarray*} D_1(t)& = & 1 + t +t^2+t^3 + \cdots + t^k + \cdots \\
& = & \frac{1}{1-t} \\
& = & \frac{1-t^2}{(1-t)(1-t^2)}
\end{eqnarray*}

We will now try to find a recurrence relation for $d_n(k).$  We will
write a partition $\cal P$ of $k$ having at most $n$ parts as ${\cal
P} = p_1, p_2, \ldots ,p_n$ where $p_1 \geq p_2 \geq \ldots \geq
p_n$ and $k = p_1 + p_2 + \cdots + p_n.$  Let ${\cal D}_{n,k} =
\{{\cal P} = p_1, p_2, \ldots ,p_n : {\rm
 with \, no \, repeated \, odd \, parts}\}.$  Then we have
 \[ {\cal D}_{n,k} = \{{\cal P}: p_n =0\} \cup_d
 \{{\cal P}: p_n=1\} \cup_d \{{\cal P}: p_n \geq 2\}. \]
 \begin{itemize}
 \item
 Clearly $|\{{\cal P}: p_n =0\}| = d_{n-1}(k).$
 \item
 If $p_n=1$, consider the association
 \[ {\cal P} \mapsto {\cal P}' = p_1-2,p_2-2, \ldots ,p_{n-1}-2, 0. \]
Since $p_{n-1} > p_n =1,$ this will be a partition of $k-1-2(n-1) =
k-2n+1.$  Since parity is preserved there will be no repeated odd
parts, and every such partition of $k-2n+1$ can occur in this
manner.  Hence
 $|\{{\cal P}:p_n=1\}| = d_{n-1}(k-2n+1).$
 \item
 If $p_n \geq 2,$ consider the association
 \[ {\cal P} \mapsto {\cal P}' = p_1-2, p_2-2, \ldots , p_n-2. \]
This will be a partition of $k-2n$ with no repeated odd parts.  Once
again every such partition can occur in this manner.  Hence
$|\{{\cal P}:p_n \geq 2\}| = d_n(k-2n).$
 \end{itemize}
 This yields the following recurrence relation
 \begin{eqnarray*}
 d_n(k) = d_{n-1}(k) + d_{n-1}(k-2n+1) + d_n(k-2n).
 \end{eqnarray*}
In terms of generating functions we have
  \begin{eqnarray*}
 D_n(t) &= &D_{n-1}(t) + D_{n-1}(t)t^{2n-1}+D_n(t)t^{2n}.
 \end{eqnarray*}
This gives the recurrence
\begin{eqnarray*}
 D_n(t)& =& D_{n-1}(t) \frac{(1+t^{2n-1})}{(1-t^{2n})} \\
 &=& D_{n-1}(t) \frac{(1-t^{4n-2})}{(1-t^{2n-1})(1-t^{2n})}.
 \end{eqnarray*}

Using this last recurrence relation  a simple induction argument
proves the following Proposition.

 \begin{proposition} \label{xxpro6.1} The generating function $D_n(t)$
 for the number of partitions
 with at most $n$ parts having no repeated odd parts is given by
 \begin{eqnarray*}
 D_n(t)&=& \frac{(1-t^2)(1-t^6)(1-t^{10})\cdots (1-t^{4n-2})}{
 (1-t)(1-t^2)(1-t^3) \cdots (1-t^{2n-1})(1-t^{2n})}.
 \end{eqnarray*}
 \end{proposition}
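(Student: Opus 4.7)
The plan is to prove Proposition \ref{xxpro6.1} by induction on $n$, leveraging the combinatorial decomposition and recurrence relation that have already been established in the discussion preceding the statement.

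For the base case $n=1$, the only partitions of $k$ with at most one part are $(k)$ itself, so $d_1(k)=1$ for all $k\geq 0$. This immediately gives
$$D_1(t)=\sum_{k=0}^\infty t^k=\frac{1}{1-t}=\frac{1-t^2}{(1-t)(1-t^2)},$$
which matches the proposed formula at $n=1$.

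For the inductive step, assume the formula holds for $D_{n-1}(t)$. I would invoke the recurrence
$$D_n(t)=D_{n-1}(t)\,\frac{1-t^{4n-2}}{(1-t^{2n-1})(1-t^{2n})},$$
which is derived from the three-way partition of $\mathcal{D}_{n,k}$ according to whether the smallest part $p_n$ equals $0$, equals $1$, or is at least $2$ (a partition with $p_n=1$ corresponds bijectively to a partition counted by $d_{n-1}(k-2n+1)$ by subtracting $2$ from each of the first $n-1$ parts, while a partition with $p_n\geq 2$ corresponds to one counted by $d_n(k-2n)$ by subtracting $2$ from every part; parity is preserved in both cases, so no repeated odd parts are introduced). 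Substituting the inductive hypothesis into the recurrence gives
$$D_n(t)=\frac{(1-t^2)(1-t^6)\cdots(1-t^{4n-6})}{(1-t)(1-t^2)\cdots(1-t^{2n-3})(1-t^{2n-2})}\cdot\frac{1-t^{4n-2}}{(1-t^{2n-1})(1-t^{2n})},$$
and combining the numerators and denominators produces exactly the formula claimed for $D_n(t)$.

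There is essentially no obstacle here: the combinatorial bookkeeping for the recurrence is already laid out in the excerpt, and the induction amounts to appending one new factor $(1-t^{4n-2})$ to the numerator and the two new factors $(1-t^{2n-1})(1-t^{2n})$ to the denominator at each step. The only point requiring mild care is ensuring the bijections used to justify the recurrence actually preserve the no-repeated-odd-parts condition, which follows because subtracting $2$ from a part preserves its parity and preserves the weakly decreasing order; consequently the repetition pattern of odd parts among $p_1,\ldots,p_{n-1}$ (or among $p_1,\ldots,p_n$) is unchanged.
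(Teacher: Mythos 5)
Your proposal is correct and follows essentially the same route as the paper: the paper derives the same three-way recurrence $d_n(k)=d_{n-1}(k)+d_{n-1}(k-2n+1)+d_n(k-2n)$, converts it to $D_n(t)=D_{n-1}(t)\frac{1-t^{4n-2}}{(1-t^{2n-1})(1-t^{2n})}$, and concludes by the same induction from $D_1(t)=\frac{1}{1-t}$. The one small point worth making explicit (which the paper does via $p_{n-1}>p_n=1$) is that in the $p_n=1$ case the no-repeated-odd-parts condition forces $p_{n-1}\geq 2$, so subtracting $2$ from the first $n-1$ parts keeps them nonnegative and the correspondence is genuinely a bijection.
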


\begin{remark}
\label{xxrem6.2} We note using the
$${\text{\it Online Encyclopedia of Integer Sequences}
\;\; (\text{http://oeis.org/)}}$$ for specific values of $n$ we
found that $D_n(t)$, the Hilbert series of $\knx^{\mathfrak{S}_n}$,
is also the Hilbert series of the invariants of $\mathcal{A}  =
k[y_1 \ldots, y_n] \otimes E(e_1, \ldots, e_n)$ under the action of
$\mathfrak{S}_n$, where $k$ is any field of characteristic not equal
to two, the degree of each $y_i = 2$, $E(e_1, \ldots, e_n)$ is the
exterior algebra on elements $e_i$ of degree 1, and $\mathfrak{S}_n$
acts on both $k[y_1 \ldots, y_n]$ and $E(e_1, \ldots, e_n)$ by
permutations. (See \cite[pp. 110-11]{AM}). We note that one can
filter $\knx$ by letting $I$ be the ideal generated by $\{x_1^2,
\ldots, x_n^2\}$.  Then the associated graded algebra
$${\rm gr}(\knx) = \knx/I \oplus I/I^2 \oplus I^2/I^3 \oplus
\cdots \oplus I^m/I^{m+1} \oplus \cdots$$
is isomorphic as a graded algebra to $\mathcal{A}$ under the map
that associates $y_i \mapsto x_i^2 +I^2$ and $e_i \mapsto x_i +I$.
Further the action of $\mathfrak{S}_n$ on $\knx$ extends to an
action on ${\rm gr}(\knx)$, and
$$\mathcal{A}^{\mathfrak{S}_n} \cong
{\rm gr}(\knx)^{\mathfrak{S}_n} \cong {\rm
gr}(\knx^{\mathfrak{S}_n}).$$ Since ${\rm
gr}(\knx^{\mathfrak{S}_n})$ has the same Hilbert series as
$\knx^{\mathfrak{S}_n}$, it follows that $D_n(t)$ is the Hilbert
series of $\knx^{\mathfrak{S}_n}$.
\end{remark}

Let $s_n(k)$ be the number of partitions of $k$ with at most $n$
parts having no repeated even parts (not even repeated 0 parts), and
let $S_n(t)$ be the corresponding generating function. The purpose
of this section is to find $S_n(t)$.

First we briefly consider a slight variation.  Let $w_n(k)$ be the
number of partitions of $k$ with exactly $n$ nonzero parts having no
repeated even parts, and let $W_n(t)$ be the corresponding
generating function.  Let $\cal P$ be such a partition. Correspond
to $\cal P$ the partition ${\cal P} \mapsto {\cal P}' = p_1-1,
p_2-1, \ldots , p_n-1$.  This will be a partition of $k-n$ with at
most $n$ parts having no repeated odd parts, and any such partition
can occur in this manner.  Hence $w_n(k) = d_n(k-n)$, and $W_n(t) =
t^nD_n(t).$

Let ${\cal S}_{n,k}$ be the collection of all partitions of $k$ with
at most $n$ parts having no repeated even parts.  Then we have
\[ {\cal S}_{n,k} = \{{\cal P}: p_n =0\} \cup_d
\{{\cal P}: p_n=1\} \cup_d \{{\cal P}: p_n \geq 2\}. \]
Since there
are no repeated empty parts, the partitions in the first set will be
partitions having exactly $n-1$ nonzero parts and $|\{{\cal P}:
p_n=0\}| = w_{n-1}(k).$  For each partition $\cal P$ in the second
set we correspond ${\cal P} \mapsto {\cal P}' = p_1, p_2, \ldots ,
p_{n-1},0$, which will be a partition of $k-1$ with exactly $n-1$
nonzero parts and no repeating even parts.  Since all such occur in
this manner, we have $|\{{\cal P}: p_n=1\}| = w_{n-1}(k-1).$
Similar to the no repeated odd case we see that $|\{{\cal P}: p_n
\geq 2\}| = s_n(k-2n).$  This gives the recurrence relation
\[
s_n(k) = w_{n-1}(k) + w_{n-1}(k-1) + s_n(k-2n)
\]
In terms of generating functions we have
\[S_n(t) = W_{n-1}(t) + W_{n-1}(t) t + S_n(t) t^{2n}, \]
and
\begin{equation} \label{E6.2.1}\tag{E6.2.1}
S_n(t) = W_{n-1}(t) \frac{(1+t)}{(1-t^{2n})} = D_{n-1}(t)
\frac{(1+t)t^{n-1})}{(1-t^{2n})}.
\end{equation}

Summarizing we have the following Proposition.

\begin{proposition} \label{xxpro6.3}
If $S_n(t)$ is the generating function for the number of partitions
having at most $n$ parts with no repeated even parts, then
\begin{eqnarray*}
S_n(t)&=& \frac{(1-t^2)(1-t^6)(1-t^{10})\cdots
(1-t^{4n-2})t^{n-1}(1+t)}{(1-t)(1-t^2)(1-t^3) \cdots
(1-t^{2n-1})(1-t^{2n})(1+t^{2n-1})}.
 \end{eqnarray*}
and
\[ S_n(t) = D_n(t)\frac{t^{n-1}(1+t)}{(1+t^{2n-1})}. \]
\end{proposition}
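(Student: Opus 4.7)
The plan mirrors the derivation of Proposition~\ref{xxpro6.1} but tracks parity in the opposite way. First I would introduce the auxiliary sequence $w_n(k)$ counting partitions of $k$ with \emph{exactly} $n$ nonzero parts and no repeated even parts, with generating function $W_n(t)$. Subtracting $1$ from every part defines a bijection $\mathcal{P}=(p_1,\dots,p_n)\mapsto \mathcal{P}'=(p_1-1,\dots,p_n-1)$ onto the set of partitions of $k-n$ with at most $n$ parts and no repeated odd parts (parity is flipped, and the requirement $p_n\ge 1$ guarantees nonnegativity after subtraction). Hence $w_n(k)=d_n(k-n)$ and $W_n(t)=t^nD_n(t)$.

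Next I would decompose $\mathcal{S}_{n,k}$ according to whether the smallest part $p_n$ equals $0$, equals $1$, or is at least $2$. Since $0$ is even and may not be repeated, the case $p_n=0$ forces exactly $n-1$ nonzero parts, contributing $w_{n-1}(k)$. In the case $p_n=1$, deleting the trailing $1$ produces a partition of $k-1$ with exactly $n-1$ nonzero parts and no repeated even parts, contributing $w_{n-1}(k-1)$. In the case $p_n\ge 2$, subtracting $2$ from every part yields an element of $\mathcal{S}_{n,k-2n}$, contributing $s_n(k-2n)$. The resulting recurrence
\[
s_n(k)=w_{n-1}(k)+w_{n-1}(k-1)+s_n(k-2n)
\]
translates to $S_n(t)(1-t^{2n})=W_{n-1}(t)(1+t)$, so
\[
S_n(t)=D_{n-1}(t)\,\frac{(1+t)\,t^{n-1}}{1-t^{2n}}.
\]
Substituting the closed form for $D_{n-1}(t)$ provided by Proposition~\ref{xxpro6.1} then yields the first displayed expression in the statement.

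For the second identity $S_n(t)=D_n(t)\frac{t^{n-1}(1+t)}{1+t^{2n-1}}$, I would invoke the recurrence $D_n(t)=D_{n-1}(t)\frac{1+t^{2n-1}}{1-t^{2n}}$ established in the proof of Proposition~\ref{xxpro6.1}. Solving for $D_{n-1}(t)=D_n(t)\frac{1-t^{2n}}{1+t^{2n-1}}$ and substituting into the boxed formula for $S_n(t)$ cancels the $(1-t^{2n})$ in the denominator, giving the second identity directly. No substantial obstacle is anticipated; the only delicate point is remembering that the ``no repeated even parts'' condition applies to the $0$ part as well (so at most one part may vanish), which is precisely what justifies the clean three-way case split on $p_n$ and forces the $w_{n-1}$ terms (rather than $s_{n-1}$) to appear in the recurrence.
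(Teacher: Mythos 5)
Your proposal is correct and follows essentially the same route as the paper: the paper likewise sets up $w_n(k)=d_n(k-n)$ via the subtract-one bijection, derives the recurrence $s_n(k)=w_{n-1}(k)+w_{n-1}(k-1)+s_n(k-2n)$ from the same three-way split on $p_n$, and obtains $S_n(t)=D_{n-1}(t)\frac{t^{n-1}(1+t)}{1-t^{2n}}$ before substituting the closed form of $D_{n-1}(t)$. The only cosmetic difference is in the last step, where the paper multiplies by $\frac{(1-t^{2n-1})(1+t^{2n-1})}{(1-t^{2n-1})(1+t^{2n-1})}$ while you invoke the recurrence $D_n(t)=D_{n-1}(t)\frac{1+t^{2n-1}}{1-t^{2n}}$ directly; these are the same computation.
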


\begin{proof}
From \eqref{E6.2.1} we have
$$\begin{aligned}
S_n(t)&= D_{n-1}(t)\frac{t^{n-1}(1+t)}{(1-t^{2n})} \\
&= \left(\frac{(1-t^2)(1-t^6)\cdots (1-t^{4n-6})}{(1-t)(1-t^2)
\cdots (1-t^{2n-2})}\right)
\left(\frac{t^{n-1}(1+t)}{(1-t^{2n})}\right) \\
& = \left(\frac{(1-t^2)\cdots (1-t^{4n-6})}{(1-t) \cdots
(1-t^{2n-2})}\right) \left(\frac{t^{n-1}(1+t)}{(1-t^{2n})}\right)
\frac{(1-t^{2n-1})(1+t^{2n-1})}{(1-t^{2n-1})(1+t^{2n-1})} \\
& =  \frac{(1-t^2)(1-t^6)(1-t^{10})\cdots
(1-t^{4n-2})t^{n-1}(1+t)}{(1-t)(1-t^2)(1-t^3)
\cdots (1-t^{2n-1})(1-t^{2n})(1+t^{2n-1})} \\
& = D_n(t)\frac{t^{n-1}(1+t)}{(1+t^{2n-1})}.
\end{aligned}
$$
\end{proof}

\subsection*{Acknowledgments}
Some of the results (e.g., Theorems \ref{zzthm3.10} and
\ref{zzthm4.10}) in this paper appeared in the Master Thesis
\cite{CA} of James Cameron Atkins, under the direction of E.
Kirkman. The authors thank James Cameron Atkins for the analysis
given in his Thesis on which Sections 3 and 4 of this paper are
based. E. Kirkman was partially supported by the Simons Foundation
(grant no. 208314)
and J.J. Zhang was partially supported by the National Science
Foundation (NSF DMS 0855743).

\end{document}